\documentclass[11pt, a4paper]{article}

\usepackage[english]{babel}
\usepackage[a4paper]{geometry}
\usepackage{amsmath,amssymb}
\usepackage{amsfonts}
\usepackage{amsthm}
\usepackage{graphicx}
\usepackage[utf8]{inputenc}
\usepackage{mathtools}
\usepackage{paralist}
\usepackage{xcolor}
\usepackage{tikz}
\usetikzlibrary{decorations.markings}
\tikzset{
  mid arrow/.style={
    postaction={
      decorate,
      decoration={
        markings,
        mark=at position 0.5 with {\arrow{stealth}}
      }
    }
  },
  mid arrow reversed/.style={
    postaction={
      decorate,
      decoration={
        markings,
        mark=at position 0.5 with {\arrowreversed{stealth}}
      }
    }
  }
}

\numberwithin{equation}{section}

\theoremstyle{plain}
\newtheorem{theorem}{Theorem}[section]

\newtheorem{lemma}[theorem]{Lemma}
\newtheorem{corollary}[theorem]{Corollary}

\theoremstyle{definition}

\newtheorem{remark}[theorem]{Remark}
\newtheorem{example}[theorem]{Example}

\newcommand{\C}{\mathbb{C}}
\newcommand{\R}{\mathbb{R}}

\newcommand{\Z}{\mathbb{Z}}

\newcommand{\bD}{\mathbb{D}}

\newcommand{\bH}{\mathbb{H}}

\newcommand{\cK}{\mathcal{K}}

\newcommand{\cO}{\mathcal{O}}

\newcommand{\coloneq}{\mathrel{\mathop:}=}

\newcommand{\conj}[1]{\overline{#1}}
\newcommand{\comp}[1]{#1^{\operatorname{c}}}

\newcommand{\dd}{{\operatorname{d}}}
\newcommand{\ee}{{\operatorname{e}}}
\newcommand{\ii}{{\operatorname{i}}}

\DeclarePairedDelimiter{\abs}{\lvert}{\rvert}

\DeclarePairedDelimiter{\cc}{[}{]}
\DeclarePairedDelimiter{\oc}{]}{]}
\DeclarePairedDelimiter{\co}{[}{[}
\DeclarePairedDelimiter{\oo}{]}{[}

\DeclareMathOperator{\re}{Re}
\DeclareMathOperator{\im}{Im}

\DeclareMathOperator{\capacity}{cap}

\DeclareMathOperator{\wind}{wind}

\DeclareMathOperator{\Eta}{H}
\DeclareMathOperator{\cn}{cn}
\DeclareMathOperator{\dn}{dn}
\DeclareMathOperator{\sn}{sn}
\DeclareMathOperator{\zn}{zn}

\newcommand{\iK}{\ii\cK'}

\title{The Complex Green's Function for Symmetric Sets}
\author{Klaus Schiefermayr\footnotemark[1] \and Olivier S\`{e}te\footnotemark[2]}
\date{July 31, 2026}

\begin{document}
\maketitle

\renewcommand{\thefootnote}{\fnsymbol{footnote}}

\footnotetext[1]{University of Applied Sciences Upper Austria, Campus Wels, 
Austria, \\ \texttt{klaus.schiefermayr@fh-wels.at}}

\footnotetext[2]{University of Ulm, Faculty of Mathematics and Economics, Helmholtzstra{\ss}e 18, 89081 Ulm, Germany. \texttt{olivier.sete@uni-ulm.de}}

\renewcommand{\thefootnote}{\arabic{footnote}}

\begin{abstract}
The aim of this paper is threefold:
(i) We study the complex Green's function (the analytic extension of the real Green's function) for multiply connected domains with some symmetry and transform it to a simple form with the help of Walsh's conformal map onto lemniscatic domains.
(ii) For the complement of the union of two real intervals, we represent the complex Green's function with the help of Jacobi's elliptic and theta functions.
(iii) Using this representation, we explicitly obtain all parameters of the lemniscatic domain corresponding to the complement of the two intervals.
In addition, using an equality between the corresponding complex Green's functions, we obtain a numerical method for computing the conformal map from the complement of two intervals onto a lemniscatic domain which yields more accurate results than a previous method from the literature.
\end{abstract}

\paragraph*{Keywords:}
Real Green's function, Complex Green's function, Multiply connected domain, Conformal map, Lemniscatic domain, Equilibrium measure

\paragraph*{AMS Subject Classification (2020):}
31A05; 
30C35; 
30C20; 
33E05; 

\section{Introduction}

Let $E$ be the union of $\ell$ disjoint, simply connected and compact sets $E_1, \ldots, E_\ell \subseteq \C$ with logarithmic capacity $\capacity(E_j) > 0$, $j = 1, \ldots, \ell$, i.e., a set of the form
\begin{equation} \label{eqn:E_general}
E = \bigcup_{j=1}^\ell E_j.
\end{equation}
Let $g_E : \C \setminus E \to \oo{0, +\infty}$ denote the (real) Green's function with pole at infinity of $\comp{E} \coloneq \widehat{\C} \setminus E$, where $\widehat{\C} = \C \cup \{ \infty \}$ denotes the extended complex plane.  Since the Green's function $g_E$ is harmonic, it has a conjugate harmonic function $h_E$ in $\C \setminus E$, which is unique up to an additive constant and is in general multi-valued.  Hence,
\begin{equation}
G_E(z) \coloneq g_E(z) + \ii h_E(z), \quad z \in \C \setminus E,
\end{equation}
is a multi-valued analytic function with $g_E = \re(G_E)$.
We call $G_E$ a \emph{complex Green's function} of $\comp{E}$.
The function $G_E$ and functions related to $G_E$ frequently appear in the 
literature, but naming seems not standardized.
Fischer~\cite[p.~93]{Fischer1996}, following 
Akhiezer~\cite[p.~1178]{Akhiezer1932}, calls $\exp(-G_E)$ complex Green's 
function,
Christiansen, Simon and Zinchenko~\cite{ChristiansenSimonZinchenko2025}
call $\exp(-G_E)$ the complexified exponential variant of the Green's function,
Walsh~\cite[p.~65]{Walsh1969} considers $\exp(+G_E)$ without giving it a 
special name,
Widom~\cite[p.~140]{Widom1969} considers $G_E$ without giving it a name and 
calls $\exp(+G_E)$ exponential Green's function~\cite[pp.~167, 208, 
219]{Widom1969},
Needham~\cite[p.~551]{Needham1997} calls $-G_E$ a complex potential.

An advantage of the complex Green's function compared to the real Green's function is that the imaginary part of $G_E$ contains additional information, in particular on $\partial E$.
For instance, if $E$ is the unit disk, $E = \{ z \in \C : \abs{z} \leq 1 \}$, the real and complex Green's function of $\comp{E}$ are $g_E(z) = \log \abs{z}$ and $G_E(z) = \log(z) = \log \abs{z} + \ii \arg(z)$.
Thus, on the unit circle $\partial E$, $g_E(z) = 0$ whereas $G_E(z) = \ii \arg(z)$.
Here, the function $G_E$ can be made single-valued and unique by 
restricting $G_E$ on $\C \setminus (E \cup \oc{-\infty, -1})$ 
and $G_E(1) = 0$.

The main results of this paper are as follows.

In Section~\ref{sect:complex_Greens_function}, we consider a general set $E$ with symmetric components $E_j$ and investigate the complex Green's function $G_E$.
We choose a single-valued branch of $G_E$ with a branch cut along the real line, and explicitly determine the boundary values of $G_E$ on $\R \setminus E$ in terms of the equilibrium measure of $E$ evaluated at the components $E_1, \ldots, E_\ell$; see Theorem~\ref{thm:GE}.
For the union of $\ell$ disjoint real intervals, we determine the boundary values of $G_E$ also on $E$; see Corollary~\ref{cor:GE_ell_intervals}.

In Section~\ref{sect:walsh}, we consider Walsh's conformal map $\Phi : \comp{E} \to \comp{L}$ onto a lemniscatic domain $\comp{L}$.  The Green's function of this standard domain is of the simplest possible form.  We investigate the corresponding complex Green's function $G_L$ and explicitly determine branches of $G_E$ and $G_L$ such that $G_E(z) = G_L(\Phi(z))$ holds; see Theorem~\ref{thm:GL}.

In Section~\ref{sect:elliptic_characterization} and Section~\ref{sect:conformal_map}, we consider the case that $E$ is the union of two real intervals.
The main results of Section~\ref{sect:elliptic_characterization} are
(i)~the representation of the complex Green's function $G_E$ in terms of Jacobi's elliptic and theta functions, see Theorem~\ref{thm:GE_two_intervals}, and
(ii)~the explicit determination of the parameters of the lemniscatic domain; see Theorem~\ref{thm:lemniscatic_domain_elliptic}.
In Section~\ref{sect:conformal_map}, we briefly discuss the numerical computation of Walsh's conformal map $\Phi : \comp{E} \to \comp{L}$.
Our numerical examples suggest that using the elliptic representation yields more accurate results than using the representation with improper integrals as in~\cite{SchiefermayrSete2025}.

Finally, in Section~\ref{sect:elliptic_functions}, the definitions and some well-known results of Jacobi's elliptic and theta functions are provided.  Moreover, we prove a new result on a certain quotient of Jacobi's theta functions, see Theorem~\ref{thm:Eta_quotient}, which is crucial for the representation of the complex Green's function in Theorem~\ref{thm:GE_two_intervals}.

\section{The complex Green's function for symmetric sets}
\label{sect:complex_Greens_function}

Let $E = \cup_{j=1}^\ell E_j$ be as in~\eqref{eqn:E_general}, let $g_E$ and $G_E$ be the real and complex Green's function of $\comp{E}$, respectively.  As mentioned in the introduction, $G_E$ is unique up to an additive (imaginary) constant and $G_E$ is a multi-valued analytic function. In contrast, since
\begin{equation} \label{eqn:GEprime}
2 \partial_z g_E(z) = G_E'(z), \quad z \in \C \setminus E,
\end{equation}
the derivative of $G_E$ is a single-valued analytic function in $\C \setminus E$.
Here, $\partial_z = \frac{1}{2} (\partial_x - \ii \partial_y)$ denotes the Wirtinger derivative.
In particular, if $f$ is analytic, then
\begin{equation} \label{eqn:wirtinger}
2 \partial_z \re(f(z)) = f'(z)
\quad \text{and} \quad
2 \partial_z \log \abs{f(z)} = \frac{f'(z)}{f(z)} = \frac{\dd}{\dd z} \log(f(z)).
\end{equation}

Let $\mu_E$ denote the equilibrium measure of $E$. For $j \in \{ 1, \ldots, \ell \}$, let $\gamma_j$ be a smooth loop in $\C \setminus E$  with $\wind(\gamma_j; z) = 1$ for $z \in E_j$ and $\wind(\gamma_j; z) = 0$ for $z \in E_k$, $k \neq j$. Then, by~\cite[Sect.~2]{SchiefermayrSete2025},
\begin{equation} \label{eqn:mj_and_GE}
\mu_E(E_j)
= \frac{1}{2 \pi \ii} \int_{\gamma_j} 2 \partial_z g_E(z) \, \dd z
= \frac{1}{2 \pi \ii} \int_{\gamma_j} G_E'(z) \, \dd z.
\end{equation}
Since $G_E(z) = G_E(z_0) + \int_{z_0}^z G_E'(\zeta) \, \dd \zeta$, it follows from~\eqref{eqn:mj_and_GE}  that two values of $G_E(z)$ differ by a term $2\pi\ii\sum_{j=1}^\ell \nu_j \mu_E(E_j)$ with $\nu_1, \ldots, \nu_\ell \in \Z$.

In the following, we consider the case that all components of $E$ are symmetric in the sense that
\begin{equation}
E_j^* = E_j, \quad j = 1, \ldots, \ell,
\end{equation}
where $S^* \coloneq \{ \conj{z} : z \in S \}$ denotes the reflection of 
a set $S \subseteq \C$ with respect to the real line.  Then, 
by~\cite[Lem.~A.2]{SchiefermayrSete2023}, the set $E_j \cap \R$ is either an 
interval or a single point, i.e.,
\begin{equation}
E_j \cap \R = \cc{b_{2j-1}, b_{2j}}
\end{equation}
with $b_{2j-1} \leq b_{2j}$ for $j = 1, \ldots, \ell$.  We label the components 
$E_1, \ldots, E_\ell$ such that
\begin{equation}
b_1 \leq b_2 < b_3 \leq b_4 < \ldots < b_{2 \ell - 1} \leq b_{2 \ell}.
\end{equation}
Moreover, $\R \setminus E$ consists of $\ell+1$ real intervals, denoted by
\begin{equation} \label{eqn:intervals_Ij}
I_0 \coloneq \oo{-\infty, b_1}, \quad
I_j \coloneq \oo{b_{2j}, b_{2j+1}}, \quad j = 1, \ldots, \ell-1, \quad
I_\ell \coloneq \oo{b_{2 \ell}, +\infty}.
\end{equation}
In order to obtain a single-valued branch of $G_E$, we restrict $G_E$ to the simply 
connected domain $\C \setminus (E \cup \co{b_1, \infty})$.
Under the assumption that $\partial E$ is \emph{regular} in the sense that 
the Green's function $g_E$ has a continuous extension to $\C$ with $g_E(z) = 0$ for 
$z \in E$, we determine the values of $G_E$ on $\R \setminus E$ and at the 
points $b_1, \ldots, b_{2 \ell}$, where that branch of $G_E$ with $G_E(b_1) = 0$ 
is selected.
Let us denote the upper and the lower half-plane by
\begin{equation}
\bH^+ \coloneq \{ z \in \C : \im(z) > 0 \} \quad \text{and} \quad
\bH^- \coloneq \{ z \in \C : \im(z) < 0 \}, 
\end{equation}
respectively.


\begin{theorem} \label{thm:GE}
Let $E = \cup_{j=1}^\ell E_j$ be as in~\eqref{eqn:E_general} with $E_j^* = E_j$ for $j = 1, \ldots, \ell$, and with regular boundary.  The complex Green's function $G_E$ has the following properties.
\begin{enumerate}
\item \label{it:GE_symmetry}
We have $G_E'(\conj{z}) = \conj{G_E'(z)}$, $z \in \C \setminus E$.  In particular, $G_E'(z)$ is real for $z \in \R \setminus E$.

\item \label{it:GE_branch}
Restricting $G_E$ to the simply connected domain $\C \setminus (E \cup \co{b_1, \infty})$, there exists a single-valued branch of $G_E$ with
\begin{equation}
G_E(b_1) \coloneq \lim_{\substack{x \to b_1 \\ x < b_1}} G_E(x) = 0.
\end{equation}
This branch has real values on $\oo{-\infty, b_1}$, and, consequently, $G_E(z) = \conj{G_E(\conj{z})}$ for $z \in \C \setminus (E \cup \co{b_1, \infty})$.

\item \label{it:GE_values_on_Ij}
The branch of $G_E$ in~\ref{it:GE_branch} has the following boundary values on 
the intervals $I_j$:
\begin{equation} \label{eqn:GE_on_Ij}
G_E^\pm(x) \coloneq
\lim_{\substack{z \to x \\ z \in \bH^\pm \setminus E}} G_E(z) = g_E(x) \mp \ii \pi \sum_{k=1}^j \mu_E(E_k), \quad x \in I_j, \quad j = 
0, \ldots, \ell.
\end{equation}
In particular, for $j = 1, \ldots, \ell-1$,
\begin{equation} \label{eqn:GE_at_bj}
G_E^\pm(b_{2j}) = G_E^\pm(b_{2j+1}) = \mp \ii \pi \sum_{k=1}^j \mu_E(E_k),
\quad
G_E^\pm(b_{2 \ell}) = \mp \ii \pi.
\end{equation}
\end{enumerate}
\end{theorem}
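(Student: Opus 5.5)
Since $E^*=E$, the domain $\comp{E}$ is symmetric with respect to $\R$. By uniqueness of the Green's function, $g_E(\conj{z}) = g_E(z)$: the function $z\mapsto g_E(\conj z)$ is harmonic in $\C\setminus E$, has the same logarithmic pole at infinity, and has the same boundary values. I will use this symmetry throughout.

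For~\ref{it:GE_symmetry}, I compute $G_E' = 2\partial_z g_E = g_x - \ii g_y$. The symmetry gives $g_x(x,-y) = g_x(x,y)$ and $g_y(x,-y) = -g_y(x,y)$, so $G_E'(\conj z) = \conj{G_E'(z)}$. On $\R\setminus E$ we have $g_y=0$, so $G_E'$ is real there.

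For~\ref{it:GE_branch}, the domain $\Omega = \C\setminus(E\cup\co{b_1,\infty})$ is simply connected. Indeed, $E\cup\co{b_1,\infty}$ is connected, because each $E_j$ meets $\R$ in $\cc{b_{2j-1},b_{2j}}\subseteq\co{b_1,\infty}$, and it is unbounded. Hence $G_E(z) = \int_{z_0}^z G_E'(\zeta)\,\dd\zeta + c$ is single-valued on $\Omega$. I fix a base point $x_0<b_1$ and choose $c$ so that $G_E$ is real at $x_0$. Then $G_E$ is real on $\oo{-\infty,b_1}$, since $G_E'$ is real there by~\ref{it:GE_symmetry}, and $\re G_E = g_E$ on all of $\Omega$. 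Regularity gives $g_E(x)\to 0$ as $x\to b_1^-$, so $\lim_{x\to b_1^-} G_E(x) = 0$. The function $\conj{G_E(\conj z)}$ is analytic on $\Omega$ (which is symmetric), agrees with $G_E$ on $\oo{-\infty,b_1}$, and therefore coincides with $G_E$ by the identity theorem.

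For~\ref{it:GE_values_on_Ij}, the real part of $G_E^\pm$ is $g_E(x)$ by continuity. On each $I_j$ the boundary values $G_E^\pm$ are analytic continuations across the open interval, with real derivative there. Hence $\im G_E^\pm$ is constant on $I_j$, and by the reflection identity $G_E^-(x) = \conj{G_E^+(x)}$. It therefore suffices to find the jump $G_E^+ - G_E^- = 2\ii\,\im G_E^+$ on $I_j$. I take a point $x\in I_j$ and a loop that starts at $x$ from the lower side, goes into $\bH^-$, crosses $\R$ at a point of $I_0$ (where $G_E$ is continuous), passes through $\bH^+$, and returns to $x$ from above. This loop encircles $E_1,\dots,E_j$ exactly once, counterclockwise. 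By~\eqref{eqn:mj_and_GE}, the change of $G_E$ along the loop is $G_E^+(x) - G_E^-(x) = -2\pi\ii\sum_{k=1}^j\mu_E(E_k)$; the sign depends on orientation and must be checked carefully. Consistency check: on $I_0$ the sum is empty. This yields~\eqref{eqn:GE_on_Ij}.

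For~\eqref{eqn:GE_at_bj}, I let $x\to b_{2j}^+$ and $x\to b_{2j+1}^-$ and use regularity ($g_E=0$ on $E$) together with the constancy of the imaginary part. For the last component, $\sum_{k=1}^\ell\mu_E(E_k)=1$ because $\mu_E$ is a probability measure.

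The main obstacle is rigor in the boundary behaviour. One must justify that the one-sided limits of $G_E$ exist on $I_j$ (via analytic continuation across the interval, which is clear since $G_E'$ is analytic near $I_j$). One must also justify that the limits at the endpoints $b_k$ exist. The real part tends to $0$ there by regularity, and the imaginary part is constant up to the endpoint, so this is fine. Finally, the orientation sign in the loop computation must be tracked carefully.
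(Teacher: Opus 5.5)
Your proposal is correct and follows essentially the paper's route: symmetry of $G_E'$, normalization of the branch on $I_0$, and then~\eqref{eqn:mj_and_GE} applied to a loop crossing $\R$ to the left of $b_1$ and at $x$, combined with conjugate symmetry. The paper instead closes a lower path $\gamma_1$ from $b_1$ with $-\conj{\gamma}_1$ and reads off $\im G_E^-(x)$ directly, whereas you compute the jump $G_E^+(x)-G_E^-(x)$ and split it via $G_E^-=\conj{G_E^+}$; you also prove the symmetry of $g_E$ by uniqueness rather than citing it.

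The one slip is the orientation. The loop you describe, going from $x$ through $\bH^-$ to $I_0$ and back through $\bH^+$, runs clockwise, with winding number $-1$ about $E_1,\dots,E_j$. That is exactly what yields your sign $G_E^+(x)-G_E^-(x)=-2\pi\ii\sum_{k=1}^j\mu_E(E_k)$; a counterclockwise loop would give the opposite sign.
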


\begin{proof}
\ref{it:GE_symmetry}
By~\cite[Lem.~A.1]{SchiefermayrSete2023}, if $E^* = E$ then Green's function satisfies $\partial_z g_E(\conj{z}) = \conj{\partial_z g_E(z)}$ for $z \in \C 
\setminus E$.
By~\eqref{eqn:GEprime}, we obtain~\ref{it:GE_symmetry}.

\ref{it:GE_branch}
Since $\C \setminus (E \cup \co{b_1, \infty})$ is simply connected, $g_E$ has a 
single-valued harmonic conjugate $h_E$ (unique up to an additive real constant) and $G_E = g_E + \ii h_E$ is 
single-valued in $\C \setminus (E \cup \co{b_1, \infty})$.
Fix $x_0 \in \oo{-\infty, b_1}$, then, for any $x < b_1$,
\begin{equation}
G_E(x) = G_E(x_0) + \int_{x_0}^x G_E'(\zeta) \, \dd \zeta,
\end{equation}
where $G_E'(\zeta)$ is real (for real $\zeta$).
Taking the imaginary part yields
$h_E(x) = h_E(x_0)$, i.e., $h_E$ is constant on $\oo{-\infty, b_1}$.
In particular,
$h_E(b_1) \coloneq \lim_{x \to b_1, x < b_1} h_E(x) = h_E(x_0)$.
Since $h_E$ is unique up to an additive real constant, we choose $h_E$ such 
that $h_E(x) = 0$ on $\oc{-\infty, b_1}$, in particular $h_E(b_1) = 0$.
By assumption, $b_1 \in \partial E$ is a regular boundary point, hence also 
$g_E(b_1) = 0$.  This shows~\ref{it:GE_branch}.

\ref{it:GE_values_on_Ij}
By the proof of~\ref{it:GE_branch}, we have $G_E(x) = g_E(x)$ for $x \in I_0$, which gives~\eqref{eqn:GE_on_Ij} for $j = 0$.
Next, let $\gamma_1$ be a smooth path from $b_1$ to $x \in I_1$ in $\bH^- \setminus E$.  Then, $\gamma = \gamma_1 + \gamma_2$ with $\gamma_2 = - \conj{\gamma}_1$
is a loop in $\C \setminus E$ with $\wind(\gamma, E_k) = \delta_{k,1}$.
Hence, by~\eqref{eqn:mj_and_GE} and~\cite[Lem.~A.3]{SchiefermayrSete2023} together with~\ref{it:GE_symmetry},
\begin{align*}
\pi \mu_E(E_1)
&= \frac{1}{2 \ii} \int_\gamma G_E'(\zeta) \, \dd \zeta
= \frac{1}{2 \ii} \int_{\gamma_1} G_E'(\zeta) \, \dd \zeta
- \frac{1}{2 \ii} \int_{\conj{\gamma}_1} G_E'(\zeta) \, \dd \zeta \\
&= \frac{1}{2 \ii} \int_{\gamma_1} G_E'(\zeta) \, \dd \zeta
- \frac{1}{2 \ii} \conj{\int_{\gamma_1} G_E'(\zeta) \, \dd \zeta}
= \im \left( \int_{\gamma_1} G_E'(\zeta) \, \dd \zeta \right) \\
&= \im (G_E^-(x) - G_E(b_1)) = \im(G_E^-(x)).
\end{align*}
Therefore, $G_E^-(x) = g_E(x) + \ii \pi \mu_E(E_1)$ for $x \in I_1$.
Integrating the other way round gives $G_E^+(x) = g_E(x) - \ii \pi \mu_E(E_1)$ 
for $x \in I_1$.  This shows~\eqref{eqn:GE_on_Ij} and~\eqref{eqn:GE_at_bj} for 
$j = 1$.
For $x \in I_j$, $j = 2, \ldots, \ell$, we consider a path $\gamma_1$ from 
$b_1$ to $x \in I_j$ in $\bH^- \setminus E$, and proceed as above.
\end{proof}

Let $E$ be the union of $\ell$ real intervals, i.e.,
\begin{equation} \label{eqn:E_ell_intervals}
E = \bigcup_{j=1}^\ell \cc{b_{2j-1}, b_{2j}}, \quad
b_1 < b_2 < \ldots < b_{2 \ell}.
\end{equation}
Then, the Green's function of $\comp{E}$ is
\begin{equation} \label{eqn:Greens_function_ell_intervals}
g_E(z) = \re \left( \int_{b_1}^z \frac{R(\zeta)}{\sqrt{H(\zeta)}} \, \dd \zeta \right), \quad z \in \C \setminus E,
\end{equation}
where $H(z) \coloneq \prod_{j=1}^{2 \ell} (z - b_j)$, 
the branch of the square root is chosen such that $\sqrt{H(z)}$ behaves as $z^\ell$ at $\infty$,
and $R$ is a certain real polynomial; see, e.g.,~\cite[Thm.~4.2]{SchiefermayrSete2025}.
The integration is along any path in $\C \setminus E$ from $b_1$ to $z$.
Therefore, the complex Green's function of $\comp{E}$ is
\begin{equation} \label{eqn:complex_Greens_function_ell_intervals}
G_E(z) = \int_{b_1}^z \frac{R(\zeta)}{\sqrt{H(\zeta)}} \, \dd \zeta, \quad z \in \C \setminus E,
\end{equation}
which is a multi-valued analytic function in $\C \setminus E$.
As in Theorem~\ref{thm:GE}, restricting $G_E$ to the simply connected domain $\C \setminus \co{b_1, \infty}$ yields a single-valued branch with $G_E(b_1) = 0$.
For this branch, the integration in~\eqref{eqn:complex_Greens_function_ell_intervals} is along any path in $\C \setminus \co{b_1, \infty}$ from $b_1$ to $z$.
In this notation, the equilibrium measure of $E$ satisfies
\begin{equation} \label{eqn:equilibrium_measure}
\dd \mu_E(x) = \frac{1}{\pi} \frac{\abs{R(x)}}{\sqrt{\abs{H(x)}}} \, \dd x, \quad x \in E,
\end{equation}
with the positive real square root; see~\cite[Thm.~4.2\,(iii)]{SchiefermayrSete2025}.
The boundary values of the branch of $G_E$ in Theorem~\ref{thm:GE}\,\ref{it:GE_branch} on $E$ can be computed explicitly.
Note that $E$ in~\eqref{eqn:E_ell_intervals} has regular boundary.

\begin{corollary} \label{cor:GE_ell_intervals}
Let $E$ be as in~\eqref{eqn:E_ell_intervals} and consider the branch of $G_E$ as above.  Then,
\begin{equation} \label{eqn:GEpm_on_E}
G_E^\pm(x) \coloneq \lim_{\substack{z \to x \\ z \in \bH^\pm}} G_E(z)
= \mp \ii \pi \mu_E(\cc{b_1, x}), \quad x \in E,
\end{equation}
or, equivalently, since the support of $\mu_E$ is contained in $E$,
\begin{equation}
G_E^\pm(x) = \mp \ii \pi \sum_{k=1}^{j-1} \mu_E(\cc{b_{2k-1}, b_{2k}}) \mp \ii \pi \mu_E(\cc{b_{2j-1}, x}),
\quad x \in \cc{b_{2j-1}, b_{2j}},
\end{equation}
for $j = 1, \ldots, \ell$.
In particular, the density of the equilibrium measure of $E$ can be represented by
\begin{equation} \label{eqn:density_muE}
\frac{\dd \mu_E(x)}{\dd x} = \frac{1}{\pi \ii} \frac{\dd G_E^-}{\dd x}(x)
= - \frac{1}{\pi \ii} \frac{\dd G_E^+}{\dd x}(x), 
\quad x \in E.
\end{equation}
\end{corollary}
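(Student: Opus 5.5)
We prove \eqref{eqn:GEpm_on_E} by integrating $G_E'$ along the upper and lower banks of $E$, starting from $b_1$. The second formula and \eqref{eqn:density_muE} then follow immediately.

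\textbf{Reduction to an integral along the bank.} Since $E$ has regular boundary, $g_E$ extends continuously to $\C$ and vanishes on $E$. Hence $\re G_E^\pm(x) = 0$ for $x \in E$, and only the imaginary part needs to be computed. Fix $x \in \cc{b_{2j-1}, b_{2j}}$. By \eqref{eqn:complex_Greens_function_ell_intervals}, $G_E^-(x)$ is obtained by integrating $R(\zeta)/\sqrt{H(\zeta)}$ from $b_1$ to $x$ along the lower bank of $\cc{b_1, x}$. This is legitimate because the integrand has only integrable singularities $(\zeta-b_k)^{-1/2}$ at the endpoints. We justify it by deforming a path in $\bH^-$ onto the segment and using dominated convergence. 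On a gap $I_k$ the boundary values $\sqrt{H}^\pm$ are real and coincide, so the integrand is real and contributes only to the real part. On $E$ the integrand is purely imaginary, and we get
\[
\im G_E^-(x) = \int_{\cc{b_1,x} \cap E} \im\Bigl(\frac{R(t)}{\sqrt{H}^-(t)}\Bigr) \dd t .
\]

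\textbf{Fixing the sign.} By \eqref{eqn:equilibrium_measure}, $\abs{\im(R/\sqrt{H}^-)} = \pi\, \dd\mu_E/\dd t$. It remains to show that the sign is $+$ on every interval of $E$, which is the main obstacle. Two arguments settle it, and they agree.

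The first uses Theorem~\ref{thm:GE}\,\ref{it:GE_values_on_Ij}. It gives $\im G_E^-(b_{2k}) = \pi \sum_{i\le k} \mu_E(E_i)$, which increases by exactly $\pi\mu_E(E_k)$ across $E_k$. The integrand $\im(R/\sqrt{H}^-)$ has constant sign on the interior of $E_k$, since $R$ has no zeros in $E$ and $\sqrt{H}^-$ is nonvanishing there. An integral of constant sign whose absolute value equals $\pi\mu_E(E_k)$ and which produces the increase $+\pi\mu_E(E_k)$ must be positive.

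The second is a local argument. The function $t \mapsto \im G_E^-(t)$ is monotone on $E_k$, because its derivative has constant sign. Its endpoint values are fixed by \eqref{eqn:GE_at_bj}, which forces the sign.

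\textbf{Conclusion.} Either argument gives $G_E^-(x) = \ii\pi\mu_E(\cc{b_1,x})$. Taking complex conjugates and using $G_E(\conj z) = \conj{G_E(z)}$ from Theorem~\ref{thm:GE}\,\ref{it:GE_branch} gives $G_E^+(x) = -\ii\pi\mu_E(\cc{b_1,x})$. The second displayed formula follows by splitting $\cc{b_1,x}$ at the components, using that $\mu_E$ vanishes on the gaps. Differentiating in $x$ yields \eqref{eqn:density_muE}; this is valid since the density of $\mu_E$ is continuous on the interior of $E$.
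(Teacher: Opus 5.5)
Your proof is correct. Its skeleton matches the paper's: integrate $R/\sqrt{H}$ from $b_1$ along the bank, observe that the gaps contribute only to the real part and $E$ only to the imaginary part, and identify the imaginary contribution with $\pi\mu_E$ via \eqref{eqn:equilibrium_measure}.

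The real difference is how the sign of $\im(R/\sqrt{H}^{\mp})$ on $E$ is fixed. The paper reads the boundary values $\mp\ii\abs{R(x)}/\sqrt{\abs{H(x)}}$ directly off the explicit signs of $R$ and of $\sqrt{H(x\pm\ii 0)}$ on each component, imported from \cite[Lem.~4.1 and Cor.~4.3]{SchiefermayrSete2025}; both carry the factor $(-1)^{\ell-j}$, which cancels. It never invokes Theorem~\ref{thm:GE}\,\ref{it:GE_values_on_Ij}, and it even re-derives the vanishing real contribution of each gap from $g_E(b_{2j+1}) = g_E(b_{2j}) = 0$.

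You instead make the corollary a genuine consequence of Theorem~\ref{thm:GE}. The jump $\pi\mu_E(E_k)$ of $\im G_E^-$ across $E_k$, given by \eqref{eqn:GE_at_bj}, together with $\abs{\im(R/\sqrt{H}^-)} = \pi\,\dd\mu_E/\dd t$, forces the sign by consistency. So you need only the modulus of the density, not any sign bookkeeping for $R$ and $\sqrt{H}$. The paper's route avoids relying on the contour-integral identity \eqref{eqn:mj_and_GE} behind Theorem~\ref{thm:GE}, at the price of importing finer sign information.

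Two small remarks. First, your claim that $R$ has no zeros on $E$ is true: its zeros are the critical points of $g_E$, one in each bounded gap. But you state it without justification, and it is unnecessary. Writing $f = \im(R/\sqrt{H}^-)$, the equalities $\int_{E_k} f\,\dd t = \pi\mu_E(E_k) = \int_{E_k}\abs{f}\,\dd t$ already give $f = \abs{f}$ almost everywhere on $E_k$, hence everywhere in its interior by continuity. Second, your ``second argument'' is the same consistency argument restated (monotonicity plus endpoint values), not an independent confirmation.
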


\begin{proof}
We directly evaluate $G_E^\pm(x)$ by integrating along the real line (seen as the boundary of $\bH^\pm$).
The values of $R$ and $\sqrt{H}$ on the real line are given in~\cite[Lem.~4.1 and Cor.~4.3]{SchiefermayrSete2025}, which we use in the following.
For $x \in \cc{b_{2j-1}, b_{2j}}$,
\begin{equation*}
\lim_{\substack{z \to x \\ z \in \bH^\pm}} \frac{R(z)}{\sqrt{H(z)}}
= \frac{(-1)^{\ell-j} \abs{R(x)}}{\pm \ii (-1)^{\ell-j} \sqrt{\abs{H(x)}}}
= \mp \ii \frac{\abs{R(x)}}{\sqrt{\abs{H(x)}}},
\end{equation*}
and for $x \in I_j$, see~\eqref{eqn:intervals_Ij}, the function $R(x) / \sqrt{H(x)}$ is real.
Hence, integration over $\cc{b_1, x} \cap E$ contributes only to the imaginary 
part of $G_E^\pm(x)$, and integration over $\cc{b_1, x} \cap (\R \setminus E)$ 
contributes only to the real part of $G_E^\pm(x)$, and thus gives $g_E(x)$.
Moreover, we have $\int_{b_{2j-1}}^{b_{2j}} 
\frac{\abs{R(t)}}{\sqrt{\abs{H(t)}}} \, \dd t = \pi \mu_E(\cc{b_{2j-1}, b_{2j}})$ 
by~\eqref{eqn:equilibrium_measure}, and $\int_{b_{2j}}^{b_{2j+1}} 
\frac{R(t)}{\sqrt{H(t)}} \, \dd t = g_E(b_{2j+1}) - g_E(b_{2j}) = 0$.
Therefore, for $x \in \cc{b_{2j-1}, b_{2j}}$, we obtain
\begin{align*}
G_E^\pm(x) &= \int_{b_1}^x \frac{R(t)}{\sqrt{H(t \pm \ii 0)}} \, \dd t
= \mp \ii \sum_{k=1}^{j-1} \int_{b_{2k-1}}^{b_{2k}} \frac{\abs{R(t)}}{\sqrt{\abs{H(t)}}} \, \dd t \mp \ii \int_{b_{2j-1}}^x \frac{\abs{R(t)}}{\sqrt{\abs{H(t)}}} \, \dd t \\
&= \mp \ii \pi \sum_{k=1}^{j-1} \mu_E(\cc{b_{2k-1}, b_{2k}}) \mp \ii \pi \mu_E(\cc{b_{2j-1}, x})
= \mp \ii \pi \mu_E(\cc{b_1, x}),
\end{align*}
where we used~\eqref{eqn:equilibrium_measure}.
This implies~\eqref{eqn:density_muE}.
\end{proof}

\section{Walsh's conformal map}\label{sect:walsh}

Let $E$ be a compact set as in~\eqref{eqn:E_general}, so that $\comp{E}$ is an $\ell$-connected domain. If $\Phi : \comp{E} \to \comp{L}$ is a conformal map with $\Phi(\infty) = \infty$, then the Green's functions of $\comp{E}$ and $\comp{L}$ satisfy $g_E(z) = g_L(\Phi(z))$; see~\cite[Thm.~4.4.4]{Ransford1995}. Naturally, one is interested in a domain $\comp{L}$ for which $g_L$ has the simplest possible form. Such a domain has been introduced by J.\,L.\,Walsh in his seminal paper~\cite{Walsh1956}, for which the Green's function $g_L$ is just a linear combination of logarithms; see~\eqref{eqn:lemniscatic_domain} and~\eqref{eqn:gL}.

\begin{theorem}[{\cite[Sect.~III]{Walsh1956}}] \label{thm:walsh_map}
Let $E$ be as in~\eqref{eqn:E_general}.
Then, there exists a unique lemniscatic domain $\comp{L} \coloneq \widehat{\C} \setminus L$ with
\begin{equation} \label{eqn:lemniscatic_domain}
L \coloneq \Bigl\{ w \in \C : \prod_{j=1}^\ell \abs{w-a_j}^{m_j} \leq \capacity(E) \Bigr\},
\end{equation}
where $a_1,\ldots,a_\ell\in\C$ are pairwise distinct and
$m_1, \ldots, m_\ell > 0$ satisfy $\sum_{j=1}^\ell m_j = 1$,
and a unique conformal map $\Phi : \comp{E} \to \comp{L}$ with
$\Phi(z) = z + \cO(1/z)$ at $\infty$.
\end{theorem}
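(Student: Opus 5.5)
We follow Walsh's original approach and reduce existence to a fixed-point (degree) argument on the parameters $(a_1,\ldots,a_\ell)$. The weights are forced to be $m_j = \mu_E(E_j)$. Indeed, since $\comp{E}$ and $\comp{L}$ are conformally equivalent with $\infty \mapsto \infty$, we have $g_E = g_L \circ \Phi$. Here the Green's function of a lemniscatic domain is
\begin{equation*}
g_L(w) = \sum_{j=1}^\ell m_j \log\abs{w-a_j} - \log\capacity(E),
\end{equation*}
because this function is harmonic in $\comp{L}$, vanishes on $\partial L$, and behaves like $\log\abs{w} - \log\capacity(E)$ at $\infty$. By~\eqref{eqn:mj_and_GE}, the flux of $g_L$ around the $j$-th component of $L$ equals $m_j$. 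Conformal invariance of these periods then gives $m_j = \mu_E(E_j)$. The normalization $\Phi(z) = z + \cO(1/z)$ fixes the constant $\capacity(E)$ and, together with $\sum_j m_j a_j$, fixes the translation.

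For existence, we first map $\comp{E}$ conformally onto a circular domain or a canonical slit domain. Then, for a candidate vector $a=(a_1,\ldots,a_\ell)$, consider the multivalued function $\exp\bigl(\sum_j m_j \log(w-a_j)\bigr)$. Its single-valued lift $U_a(w) = \prod_j (w-a_j)^{m_j}$ (locally) is the key object: $\comp{L}$ is exactly where $\abs{U_a} > \capacity(E)$. We would construct $\Phi$ via the equation
\begin{equation*}
\prod_j (\Phi(z)-a_j)^{m_j} = \capacity(E)\, \ee^{G_E(z)}.
\end{equation*}
The right-hand side has monodromy $\ee^{2\pi\ii m_j}$ around $E_j$, which matches the monodromy of the left-hand side around $a_j$. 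Solving this equation for $\Phi$ requires choosing the $a_j$ so that each critical point of $g_E$ in $\comp{E}$ corresponds to a critical point of $g_L$ with the same critical value. Walsh does this by an induction/continuity argument on $\ell$, using a Brouwer-type degree argument on the configuration space of the $a_j$.

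Uniqueness is easier. If $\Phi_1$ and $\Phi_2$ are two such maps onto lemniscatic domains $\comp{L_1}$ and $\comp{L_2}$, then $\Psi = \Phi_2\circ\Phi_1^{-1}$ maps $\comp{L_1}$ onto $\comp{L_2}$ with $\Psi(w) = w + \cO(1/w)$. It satisfies $g_{L_2}\circ\Psi = g_{L_1}$, so $U_{a^{(2)}}\circ\Psi = \ee^{\ii c}\, U_{a^{(1)}}$ locally. Comparing the expansions at $\infty$ gives $c = 0$. Comparing the zeros of the derivatives, i.e.\ the critical points $\sum_j m_j/(w-a_j) = 0$, and the branch structure then forces $a^{(1)} = a^{(2)}$ and $\Psi = \mathrm{id}$. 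The main obstacle is the existence step: proving that the map from parameter space to conformal moduli is onto. We would take this directly from Walsh~\cite{Walsh1956}, or alternatively from the proof via Koebe's continuity method, rather than reprove it.
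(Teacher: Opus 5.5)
The paper gives no proof of this theorem: it is quoted from Walsh~\cite{Walsh1956}, with Grunsky, Jenkins and Landau mentioned for alternative proofs. Your existence step also defers to Walsh, so there you agree with the paper. Your identification $m_j=\mu_E(E_j)$ via conformal invariance of the periods is correct; it is the remark after the theorem, taken from~\cite[Thm.~2.1]{SchiefermayrSete2025}. It tacitly uses that each component of $L$ contains exactly one $a_j$, which follows from the maximum and minimum principles for $\log\abs{U_a}$. Your heuristic for existence is incomplete, though. The critical values of $g_L$ do not change when all $a_j$ are rotated about $\sum_j m_j a_j$, so matching real critical values cannot determine the configuration. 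One must also match $\im G$ at the critical points, and this is where the normalization of the rotation at $\infty$ enters. Since you defer this step to Walsh, it does not break the argument.

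The genuine gap is in the uniqueness part, which you present as your own proof. Everything up to $U_{a^{(2)}}\circ\Psi=U_{a^{(1)}}$ is fine, including $c=0$ and $\sum_j m_j a_j^{(1)}=\sum_j m_j a_j^{(2)}$ from the expansion at $\infty$. The final sentence, however, proves nothing. Differentiating only gives $\Psi'(w)\,R_2(\Psi(w))=R_1(w)$ with $R_i(w)=\sum_j m_j/(w-a_j^{(i)})$, i.e., $\Psi$ carries the critical points of $g_{L_1}$ to those of $g_{L_2}$. As long as $\Psi$ is unknown, this yields no relation between $a^{(1)}$ and $a^{(2)}$. What is missing is a global argument that $\Psi$ is the identity, for instance by analytic continuation into $L_1$:
\begin{itemize}
\item Near each component $L_j^{(i)}$, the function $V_j^{(i)}=(w-a_j^{(i)})\prod_{k\neq j}(w-a_k^{(i)})^{m_k/m_j}$, a branch of $U_{a^{(i)}}^{1/m_j}$, is single-valued.
\item No zero of $R_i$ lies on $\partial L_i$, since otherwise two components of $L_i$ would touch. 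Hence $\partial L_j^{(i)}$ is an analytic Jordan curve on which $\abs{V_j^{(i)}}=\capacity(E)^{1/m_j}$.
\item The argument principle (one simple zero, at $a_j^{(i)}$) then shows that $V_j^{(i)}$ maps $\mathrm{int}\,L_j^{(i)}$ conformally onto the disk of radius $\capacity(E)^{1/m_j}$. Having no critical points near $L_j^{(i)}$, it stays injective on a neighborhood of $L_j^{(i)}$.
\item Since $\log\abs{V_j^{(i)}}=(g_{L_i}+\log\capacity(E))/m_j$, the identity $g_{L_2}\circ\Psi=g_{L_1}$ gives $V_j^{(2)}\circ\Psi=\omega_j V_j^{(1)}$ with a constant $\abs{\omega_j}=1$ on a collar of $\partial L_j^{(1)}$. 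Here $L_j^{(2)}$ denotes the component of $L_2$ that $\Psi$ approaches from that collar; its weight is $m_j$ by your period argument.
\item Hence $(V_j^{(2)})^{-1}\circ(\omega_j V_j^{(1)})$ continues $\Psi$ analytically into $L_j^{(1)}$. So $\Psi(w)-w$ is entire and $\cO(1/w)$ at $\infty$, and therefore identically zero by Liouville.
\end{itemize}
Then $L_1=L_2$ and $g_{L_1}=g_{L_2}$, and comparing the logarithmic singularities gives $a^{(1)}=a^{(2)}$.
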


The first proof of Theorem~\ref{thm:walsh_map} was given by Walsh~\cite{Walsh1956}, other proofs have been derived by
Grunsky~\cite{Grunsky1957a,Grunsky1957b,Grunsky1978}, 
Jenkins~\cite{Jenkins1958}, and Landau~\cite{Landau1961}.
Both analytic and numerical examples for the conformal map $\Phi$ have been considered in~\cite{SeteLiesen2016}, \cite{SeteLiesen2017}, \cite{NasserLiesenSete2016}, \cite{LiesenSeteNasser2017}, \cite{SchiefermayrSete2023}, \cite{SchiefermayrSete2024}, and~\cite{SchiefermayrSete2025}.
A possible application in approximation theory is the construction of the 
Faber-Walsh polynomials generalizing the Faber polynomials to disconnected 
compact sets; see~\cite{Walsh1958}, \cite{Suetin1998}, 
and~\cite{SeteLiesen2017}.

\begin{remark}
\begin{enumerate}
\item Given the set $E$ as in~\eqref{eqn:E_general}, the centers $a_1, \ldots, a_\ell$ and the exponents $m_1, \ldots, m_\ell$ in Theorem~\ref{thm:walsh_map} are uniquely determined.
In~\cite[Thm.~2.1]{SchiefermayrSete2025}, we proved that the exponents are related to the equilibrium measure $\mu_E$ of $E$ by
\begin{equation} \label{eqn:mj_muE}
m_j = \mu_E(E_j), \quad j = 1, \ldots, \ell.
\end{equation}
Just like the set $E$, the set $L$ in~\eqref{eqn:lemniscatic_domain} consists of $\ell$ disjoint simply connected and compact sets $L_1, \ldots, L_\ell$, i.e., $L = \cup_{j=1}^\ell L_j$, where $a_j \in L_j$, $j = 1, \ldots, \ell$.

\item Let $g_E$ and $g_L$ denote the Green's functions of $\comp{E}$ and $\comp{L}$.  One reason of the importance of Walsh's map is the fact that $g_L$ is of the simplest possible form, i.e.,
\begin{equation} \label{eqn:gL}
g_L(w) = \sum_{j=1}^\ell m_j \log \abs{w - a_j} - \log(\capacity(E)), \quad w \in \C \setminus L.
\end{equation}
By~\cite[Thm.~4.4.4]{Ransford1995}, see also~\cite[p.~28]{Walsh1958},
\begin{equation} \label{eqn:gE_gL}
g_E(z) = g_L(\Phi(z)), \quad z \in \C \setminus E,
\end{equation}
since $\Phi : \comp{E} \to \comp{L}$ is conformal with $\Phi(\infty) = \infty$.
By~\eqref{eqn:gL}, a complex Green's function of $\comp{L}$ is
\begin{equation} \label{eqn:GL}
G_L(w) = \sum_{j=1}^\ell m_j \log (w - a_j) - \log(\capacity(E)), \quad w \in \C \setminus L.
\end{equation}
\end{enumerate}
\end{remark}

From now on, we consider sets $E$ with components symmetric with respect to the real line, that is, $E_j^* = E_j$, $j = 1, \ldots, \ell$.
Then, the same holds for $L$, i.e., $L = 
\cup_{j=1}^\ell L_j$, and $L_j^* = L_j$, $j = 1, \ldots, \ell$, and the centers $a_1, \ldots, a_\ell$ are real,
which follows by combining~\cite[Lem.~2.2]{SeteLiesen2016} 
and~\cite[Thm.~2.7]{SchiefermayrSete2023}.
In this case,
\begin{equation} \label{eqn:real_parts_of_L}
L_j \cap \R = \cc{c_{2j-1}, c_{2j}}, \quad j = 1, \ldots, \ell,
\end{equation}
with $c_1 < c_2 < \ldots < c_{2 \ell}$ and $c_{2j-1} < a_j < c_{2j}$ for $j = 
1, \ldots, \ell$.  Let us denote
\begin{equation} \label{eqn:gaps_in_lemniscatic_domain}
J_0 \coloneq \oo{-\infty, c_1}, \quad
J_j \coloneq \oo{c_{2j}, c_{2j+1}}, \quad j = 1, \ldots, \ell-1, \quad
J_\ell \coloneq \oo{c_{2 \ell}, \infty}.
\end{equation}

In the next theorem, we select a suitable branch of $G_L$, show that $G_L(\Phi(z)) = G_E(z)$, and determine the boundary values of $G_L$.

\begin{theorem} \label{thm:GL}
If all components of $E$ are symmetric with respect to $\R$ then, in the above notation, $G_L$ has the following properties.
\begin{enumerate}
\item \label{it:GL_branch}
Restricting $G_L$ to the simply connected domain $\C \setminus (L \cup \co{c_1, 
\infty})$, there exists a single-valued branch of $G_L$ with $G_L(c_1) = 0$.
This branch is given by
\begin{equation} \label{eqn:GL_branch}
G_L(w) = \sum_{j=1}^\ell m_j \log (a_j - w) - \log(\capacity(E))
= g_L(w) + \ii \sum_{j=1}^\ell m_j \arg (a_j - w)
\end{equation}
with the principal branch of the logarithm and argument, i.e., $\arg : \C 
\setminus \oc{-\infty, 0} \to \oo{-\pi, \pi}$.
This branch of $G_L$ has real values on $\oc{-\infty, c_1}$ and, consequently, 
$G_L(w) = \conj{G_L(\conj{w})}$ for $w \in \C \setminus (L \cup \co{c_1, 
\infty})$.

\item \label{it:GL_GE_relation}
The branch of $G_L$ in~\ref{it:GL_branch} satisfies
\begin{equation}
G_L(\Phi(z)) = G_E(z), \quad z \in \C \setminus (E \cup \co{b_1, \infty}),
\end{equation}
with the branch of $G_E$ from 
Theorem~\ref{thm:GE}\,\ref{it:GE_branch}.

\item \label{it:GL_boundary_values}
The branch of $G_L$ in~\ref{it:GL_branch} in $\C \setminus (L \cup \co{c_1, 
\infty})$ has the following boundary values:
\begin{align}
G_L(w) &= \ii \sum_{j=1}^\ell m_j \arg(a_j - w), \quad w \in \partial L \setminus \oo{c_1, \infty}, \label{eqn:GL_bdry_values_notgaps} \\
G_L^\pm(u) &\coloneq \lim_{\substack{w \to u \\ w \in \bH^\pm \setminus L}} G_L(w)
= g_L(u) \mp \ii \pi \sum_{k=1}^j m_k, \quad u \in \overline{J}_j, \quad j = 0, 1, \ldots, \ell.
\label{eqn:GL_bdry_values_gaps}
\end{align}
In particular,
\begin{equation}
G_L(c_1) = 0, \quad
G_L^\pm(c_{2j}) = G_L^\pm(c_{2j+1}) = \mp \ii \pi \sum_{k=1}^j m_k, \quad
G_L^\pm(c_{2 \ell}) = \mp \ii \pi.
\end{equation}
\end{enumerate}
\end{theorem}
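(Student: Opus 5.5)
The plan is to verify~\ref{it:GL_branch} directly from the explicit formula, then deduce~\ref{it:GL_GE_relation} by a uniqueness argument for analytic functions with the same real part, and finally read off~\ref{it:GL_boundary_values} from the formula~\eqref{eqn:GL_branch}. We use the facts recalled above: the centers $a_j$ are real, $L_j^* = L_j$, $L_j \cap \R = \cc{c_{2j-1}, c_{2j}}$ with $c_{2j-1} < a_j < c_{2j}$, and $m_j = \mu_E(E_j)$ by~\eqref{eqn:mj_muE}.

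For~\ref{it:GL_branch}, note that $a_j - w \in \oc{-\infty, 0}$ exactly when $w \in \co{a_j, \infty}$. Since $a_j \ge a_1 > c_1$, this ray lies in $\co{c_1, \infty}$. Hence each principal $\log(a_j - w)$ is analytic in $\C \setminus (L \cup \co{c_1, \infty})$, and the right-hand side of~\eqref{eqn:GL_branch} is analytic there. Its real part is $g_L$ by~\eqref{eqn:gL}, so it is a branch of $G_L$. For real $w < c_1$ we have $a_j - w > 0$, so all arguments vanish and $G_L(w) = g_L(w)$ is real. As $w \to c_1$ we get $g_L(c_1) = 0$, because $c_1 \in \partial L$ and $g_L$ vanishes on $\partial L$. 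The reflection identity $G_L(w) = \conj{G_L(\conj{w})}$ then follows from the Schwarz reflection principle, or directly from $\conj{\log(a_j - w)} = \log(a_j - \conj{w})$ for the principal branch with $a_j$ real.

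For~\ref{it:GL_GE_relation}, first show that $\Phi$ maps the real axis to itself. By uniqueness in Theorem~\ref{thm:walsh_map} and the symmetry $E^* = E$, we get $\Phi(z) = \conj{\Phi(\conj{z})}$. So $\Phi$ is real on $\R \setminus E$ and increasing near $\infty$, and it maps $\bH^\pm \setminus E$ onto $\bH^\pm \setminus L$. Since $\Phi$ is a homeomorphism, the gaps $I_j$ are mapped onto the gaps $J_j$, and in particular $I_0$ is mapped onto $J_0$ with boundary correspondence $\Phi(b_1) = c_1$. I expect this topological bookkeeping to be the main obstacle. The cleanest route I would try is to use the fact that $\Phi$ is a real increasing bijection of $\R \setminus E$ onto $\R \setminus L$ near $\pm\infty$, together with the ordering of components, which is preserved because $\Phi$ preserves the half-planes. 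Consequently $\Phi$ maps $\C \setminus (E \cup \co{b_1, \infty})$ onto $\C \setminus (L \cup \co{c_1, \infty})$. The functions $G_L \circ \Phi$ and $G_E$ are then analytic on this simply connected domain and have the same real part by~\eqref{eqn:gE_gL}. They therefore differ by an imaginary constant, which is zero since both are real on $I_0$ (Theorem~\ref{thm:GE}\,\ref{it:GE_branch}).

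For~\ref{it:GL_boundary_values}, on $\partial L$ we have $g_L = 0$, and~\eqref{eqn:GL_bdry_values_notgaps} follows by continuity of the principal arguments away from the cut. For $u \in \overline{J}_j$, approaching from $\bH^\pm$, the limit of $\arg(a_k - w)$ is $0$ if $a_k > u$ and $\mp\pi$ if $a_k < u$. The centers below $u$ are exactly $a_1, \ldots, a_j$, which gives~\eqref{eqn:GL_bdry_values_gaps}. At the endpoints $c_{2j}, c_{2j+1}$ we use $g_L = 0$, and $\sum_k m_k = 1$ gives the value $\mp \ii \pi$ at $c_{2\ell}$. Alternatively, these values follow from~\ref{it:GL_GE_relation} and Theorem~\ref{thm:GE}\,\ref{it:GE_values_on_Ij} together with $m_k = \mu_E(E_k)$, which serves as a consistency check.
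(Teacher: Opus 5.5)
Your proposal is correct and follows essentially the paper's proof: parts (i) and (iii) are read off from the principal-branch formula exactly as in the paper. For part (ii), your argument ``same real part on a simply connected domain, normalized on $I_0$'' is equivalent to the paper's ``agreement on $\oo{-\infty, b_1}$ plus the identity principle''.

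The only real difference is that the paper does not derive the mapping property $\Phi(\C \setminus (E \cup \co{b_1, \infty})) \subseteq \C \setminus (L \cup \co{c_1, \infty})$ with $\Phi(I_0) \subseteq J_0$; it cites it from \cite[Thm.~2.1]{SchiefermayrSete2024}. Your symmetry sketch closes this step once you add three observations:
\begin{enumerate}
\item Injectivity together with $\Phi(\conj{z}) = \conj{\Phi(z)}$ forces non-real points to have non-real images.
\item $\bH^+ \setminus E$ is connected, which follows from a clopen argument in $\C \setminus E$ using $E^* = E$; with $\Phi(z) = z + \cO(1/z)$ this gives $\Phi(\bH^\pm \setminus E) \subseteq \bH^\pm$.
\item Only $\Phi(I_0) \subseteq J_0$ is needed, not the full gap ordering $I_j \mapsto J_j$. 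This follows from the connectedness of $I_0$ and $\Phi(x) = x + \cO(1/x)$ as $x \to -\infty$.
\end{enumerate}
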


\begin{proof}
The complex Green's function $G_L$ of $\comp{L}$ has the form~\eqref{eqn:GL}.

\ref{it:GL_branch}
Formula~\eqref{eqn:GL_branch} defines a single-valued branch of $G_L$ in $\C \setminus (L \cup \co{c_1, \infty})$.
Since $w < c_1$ implies $w < a_j$ for $j = 1, \ldots, \ell$, this branch is real-valued on $\oc{-\infty, c_1}$ with $G_L(c_1) = g_L(c_1) = 0$, since $c_1 \in \partial L$.

\ref{it:GL_GE_relation} By~\cite[Thm.~2.1]{SchiefermayrSete2024},
the image of $z \in \C \setminus (E \cup \co{b_1, \infty})$ satisfies
$\Phi(z) \in \C \setminus (L \cup \co{c_1, \infty})$,
and $z < b_1$ implies $\Phi(z) < c_1$.  
By~\eqref{eqn:gE_gL}, $g_L(\Phi(z)) = g_E(z)$.  Since $G_E(z) = g_E(z)$ and $G_L(\Phi(z)) = g_L(\Phi(z))$ for $z < b_1$, we obtain $G_E(z) = G_L(\Phi(z))$ for $z < b_1$, which extends to all $z \in \C \setminus (E \cup \co{b_1, \infty})$ by the identity principle for analytic functions.

\ref{it:GL_boundary_values}
The representation~\eqref{eqn:GL_branch} implies~\eqref{eqn:GL_bdry_values_notgaps}, since $g_L(w) = 0$ for $w \in \partial L$.  It also implies~\eqref{eqn:GL_bdry_values_gaps},
since
\begin{equation}
\lim_{\substack{w \to u \\ w \in \bH^\pm \setminus L}} \arg(a_j - w) =
\begin{cases}
0, & u < a_j \\
\mp \pi, & u > a_j.
\end{cases}
\end{equation}
This completes the proof.
\end{proof}

\begin{remark}
In Theorem~\ref{thm:GE} and Theorem~\ref{thm:GL}, we chose the branch cut to the right.  Of course, other branch cuts are possible, and another natural choice is to consider $G_E$ on $\C \setminus (E \cup \oc{-\infty, b_{2 \ell}})$ with $G_E(b_{2 \ell}) = 0$, which has the boundary values
$G_E^\pm(x) = g_E(x) \pm \ii \pi \sum_{k=j+1}^\ell \mu_E(E_k),$ $x \in I_j$, $j = 0, \ldots, \ell$,
and $G_L$ on $\C \setminus (L \cup \oc{-\infty, c_{2 \ell}})$ with the representation~\eqref{eqn:GL} with the principal branch of the logarithm and boundary values
$G_L(w) = \ii \sum_{j=1}^\ell m_j \arg(w - a_j)$ for $w \in \partial L \setminus \oo{-\infty, c_{2 \ell}}$ and 
$G_L^\pm(u) = g_L(u) \pm \ii \pi \sum_{k=j+1}^\ell m_k$, $u \in J_j$, $j = 0, \ldots, \ell$.
\end{remark}

\section{The complex Green's function for two intervals}
\label{sect:elliptic_characterization}

For given $b_1 < b_2 < b_3 < b_4$, consider the set
\begin{equation} \label{eqn:E}
E \coloneq \cc{b_1, b_2} \cup \cc{b_3, b_4}
\end{equation}
consisting of two disjoint real intervals.
In Theorem~\ref{thm:GE_two_intervals}, we represent the complex Green's functions $G_E$ with the help of Jacobi's theta and elliptic functions.  With this result, in Theorem~\ref{thm:lemniscatic_domain_elliptic}, we explicitly determine the parameters $m_1, m_2$ and $a_1, a_2$ of the lemniscatic domain from Theorem~\ref{thm:walsh_map}.

Define the modulus $k$ by
\begin{equation} \label{eqn:k}
k \coloneq \sqrt{\frac{(b_4-b_1)(b_3-b_2)}{(b_4-b_2)(b_3-b_1)}}.
\end{equation}
Since $(b_4 - b_2)(b_3 - b_1) - (b_4-b_1)(b_3-b_2) = (b_4-b_3)(b_2-b_1) > 0$, we have $0 < k < 1$.  Let $\cK\equiv\cK(k)$ be the corresponding elliptic integral of the first kind, let
\begin{equation} \label{eqn:k'}
k' \coloneq \sqrt{1-k^2} = \sqrt{\frac{(b_4-b_3)(b_2-b_1)}{(b_4-b_2)(b_3-b_1)}}
\end{equation}
be the complementary modulus, and let $\cK' \equiv \cK'(k) \coloneq \cK(k')$ be the corresponding associated elliptic integral of the first kind. Note that $\cK,\cK'>0$. All Jacobian elliptic and theta functions in this section have modulus~$k$.  The definitions and properties of these functions are provided in Section~\ref{sect:elliptic_functions}.

Next, we define a parameter $\lambda \in \oo{0, 1}$ that turns out to be crucial for what follows.  Since $\sn : \cc{0, \cK} \to \cc{0, 1}$ is bijective (Lemma~\ref{lem:sn-bijective}) and $\sqrt{(b_4-b_2)/(b_4-b_1)} \in \oo{0, 1}$, there exists a unique $\lambda \in \oo{0,1}$ such that
\begin{equation} \label{eqn:lambda}
\sn(\varrho) = \sqrt{\frac{b_4-b_2}{b_4-b_1}}, \quad \varrho=\lambda\cK.
\end{equation}
By~\eqref{eqn:k}, \eqref{eqn:lambda}, and \eqref{eqn:sncndn}, we have
\begin{equation} \label{eqn:cndn_rho}
\cn(\varrho) = \sqrt{\frac{b_2-b_1}{b_4-b_1}} \quad\text{and}\quad \dn(\varrho) = \sqrt{\frac{b_2-b_1}{b_3-b_1}}.
\end{equation}


With these preliminaries, we are in position to introduce a conformal map~$\varphi$ from a certain rectangular domain onto $\comp{E}$,
which has been studied by Akhiezer~\cite[p.~1175, 
Eqn.~(5)]{Akhiezer1932} and Fischer~\cite[p.~116]{Fischer1996} (with the 
normalization $b_1 = -1$, $b_4 = 1$).  Note that in~\cite{Akhiezer1932, 
Fischer1996}, $-\varrho$ is used instead of $\varrho$.
For completeness, in the appendix, we provide a proof which differs significantly from that in~\cite{Akhiezer1932, Fischer1996}.
In Figure~\ref{fig:varphi}, we illustrate the maps $u \mapsto \sn(u)$, $u \mapsto \sn^2(u)$, and $u \mapsto \varphi(u)$.

\begin{lemma} \label{lem:varphi}
For given $E = \cc{b_1, b_2} \cup \cc{b_3, b_4}$, let $k$ be as in~\eqref{eqn:k} and $\varrho$ be as in~\eqref{eqn:lambda}.
Then,
\begin{equation} \label{eqn:phi}
\varphi(u) \coloneq \frac{b_2 \sn^2(u) - b_1 \sn^2(\varrho)}{\sn^2(u) - \sn^2(\varrho)}
\end{equation}
is an even elliptic function of order~$2$ with fundamental periods $\{ 2 \cK, 2 \iK \}$ and simple poles at $u = \pm \varrho$.  The maps
\begin{equation} \label{eqn:phi_onto_E_complement}
\varphi : P \coloneq \{ u \in \C : 0 < \re(u) < \cK, - \cK' < \im(u) \leq \cK' \} \to \comp{E}
\end{equation}
and
\begin{equation} \label{eqn:varphi_Pplus}
\varphi : P^\pm \coloneq \bigl\{ t \cK \pm \ii t' \cK' : 0<t,t'<1 \bigr\} \to \bH^\mp,
\end{equation}
are conformal and bijective.  On the boundary, the maps
\begin{equation} \label{eqn:phi_boundary_correspondence}
\begin{aligned}
&\varphi:\co{0,\varrho}\to\oc{-\infty,b_1}, \quad
\varphi:\cc{0,\iK}\to\cc{b_1, b_2}, \quad
\varphi:\cc{\iK,\cK+\iK}\to\cc{b_2,b_3}, \\
&\qquad\varphi:\cc{\cK,\cK+\iK}\to\cc{b_3, b_4}, \quad
\varphi:\oc{\varrho,\cK} \to\co{b_4,+\infty}
\end{aligned}
\end{equation}
are bijective and
\begin{equation}\label{SpecialValueszu}
\varphi(0)=b_1, \quad \varphi(\iK)=b_2, \quad \varphi(\cK+\iK)=b_3, \quad \varphi(\cK)=b_4, \quad \varphi(\varrho)=\infty.
\end{equation}
Moreover, the inverse of $\varphi$ is
\begin{equation} \label{eqn:varphi_inv}
\varphi^{-1} : \comp{E} \to P, \quad
\varphi^{-1}(z) = \sn^{-1} \biggl( \sn(\varrho) \sqrt{\frac{z-b_1}{z-b_2}} 
\biggr),
\end{equation}
with the principal branch of the square root.
\end{lemma}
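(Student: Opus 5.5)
Since $\varphi$ is a Möbius transformation composed with $\sn^2$, the plan is to reduce every claim to known facts about $s(u)\coloneq\sn^2(u)$ and then push them through the Möbius map
\[
M(s) \coloneq \frac{b_2 s - b_1 \sn^2(\varrho)}{s - \sn^2(\varrho)},
\]
so that $\varphi = M\circ s$.

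\emph{Algebraic properties.} $\sn$ is odd, of order~2, with periods $4\cK$ and $2\iK$, and satisfies $\sn(u+2\cK)=-\sn(u)$. Hence $s$ is even, elliptic with periods $2\cK$ and $2\iK$, and of order~2, with a double pole at $\iK$. The determinant of $M$ is $-b_2\sn^2(\varrho)+b_1\sn^2(\varrho)\neq 0$, so $M$ is nondegenerate. Therefore $\varphi$ is even, elliptic of order~2 with the same periods, and its poles are exactly the solutions of $s(u)=\sn^2(\varrho)$, namely $u\equiv\pm\varrho$. These are distinct modulo the lattice because $0<\varrho<\cK$, so both poles are simple. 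That $2\cK$ and $2\iK$ are fundamental periods follows since the period parallelogram they span contains exactly the two poles $\pm\varrho$, consistent with order~2.

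\emph{Boundary correspondence.} I will trace $s$ along the boundary of the rectangle $[0,\cK]\times[0,\cK']$:
\begin{itemize}
\item on $[0,\cK]$, $s$ increases from $0$ to $1$;
\item on $[0,\iK]$, $\sn(\ii t)=\ii\,\mathrm{sc}(t,k')$, so $s$ decreases from $0$ to $-\infty$;
\item on $[\iK,\cK+\iK]$, $\sn(u+\iK)=1/(k\sn u)$, so $s$ decreases from $+\infty$ to $1/k^2$;
\item on $[\cK,\cK+\iK]$, $s=1/\mathrm{dn}^2(t,k')$, so $s$ increases from $1$ to $1/k^2$.
\end{itemize}
Next I evaluate $M$ at the relevant points: $M(0)=b_1$, $M(\infty)=b_2$, and $M(1)=b_4$, using $\sn^2(\varrho)=(b_4-b_2)/(b_4-b_1)$. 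For $M(1/k^2)=b_3$, I use the definition \eqref{eqn:k} of $k$; this should be a short cross-ratio check, since $M$ maps $0,\infty,1,\sn^2(\varrho)$ to $b_1,b_2,b_4,\infty$ and the cross ratios must agree. Because $M$ is monotone on each real interval avoiding its pole $\sn^2(\varrho)\in(0,1)$, each of the segments in \eqref{eqn:phi_boundary_correspondence} is mapped bijectively, with the pole $\varrho$ splitting $[0,\cK]$. This gives \eqref{eqn:phi_boundary_correspondence} and \eqref{SpecialValueszu}.

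\emph{Conformality and the inverse.} By the argument principle (or the boundary correspondence theorem), the open rectangle $P^+$ is mapped by $\varphi$ conformally onto a half-plane, since its boundary is traversed once onto $\widehat{\R}$. Checking the orientation, say via $\im\varphi$ near $0$ along the direction $1+\ii$, should show that the image is $\bH^-$. Evenness gives $\varphi(\conj u)=\conj{\varphi(u)}$, because $\sn$ is real on the real axis, so $P^-$ maps onto $\bH^+$. Gluing $P^+$ and $P^-$ along the segment $(0,\cK)$, whose image is $\R\setminus E$, and including the top edge $(\iK,\cK+\iK)$ once (it is identified with the bottom edge by the period $2\iK$), shows that $\varphi:P\to\comp E$ is bijective and conformal, with the pole $\varrho$ sent to $\infty$. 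For the inverse, I solve $\varphi(u)=z$ to obtain $\sn^2(u)=\sn^2(\varrho)(z-b_1)/(z-b_2)$. The principal square root and the branch of $\sn^{-1}$ are then chosen to land in $P$; I will verify this on $z<b_1$, where the root is in $(0,\sn\varrho)$ and $u\in(0,\varrho)$, and then conclude by continuity and the identity principle.

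I expect the main obstacle to be the careful orientation and branch bookkeeping: confirming that $P^+$ maps to $\bH^-$ rather than $\bH^+$, and that the principal-root formula for $\varphi^{-1}$ is consistent on all of $\comp E$, including the edges. The identity $M(1/k^2)=b_3$ is routine algebra.
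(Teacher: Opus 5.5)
Your proposal is correct and follows the paper's decomposition. You write $\varphi=M\circ\sn^2$ (the paper calls your $M$ by $T$). You use that this real M\"obius map reverses the orientation of $\R$, because its determinant $-(b_2-b_1)\sn^2(\varrho)$ is negative. You also use the special values $M(0)=b_1$, $M(1)=b_4$, $M(1/k^2)=b_3$, $M(\infty)=b_2$.

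The real difference is how you get interior bijectivity. The paper takes from the literature that $\sn$ maps $P^+$ conformally onto the open first quadrant. By reflection, $\sn^2$ then maps $P^+\cup P^-\cup\oo{0,\cK}$ conformally and bijectively onto $\C\setminus(\oc{-\infty,0}\cup\co{1,\infty})$. From this it reads off $P^\pm\to\bH^\mp$, the boundary correspondence, and the inverse (by undoing the three steps), with no counting argument. You instead compute $\sn^2$ edge by edge from \eqref{eqn:snK} and \eqref{eqn:sncndniu}, which in effect re-derives the boundary behavior of $\sn$ that the paper cites. You then get bijectivity of $P^+\to\bH^-$ from the argument principle, and fix the orientation by the local check at $u=0$. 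That check is right: $\sn^2(\epsilon(1+\ii))\approx 2\ii\epsilon^2\in\bH^+$ and $M'(0)<0$.

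Your route is more self-contained, but it needs one point your sketch glosses over. $\varphi$ has a pole at $\varrho\in\partial P^+$, so the argument principle must be applied on a contour indented at $\varrho$. The residue $(b_2-b_1)\sn(\varrho)/(2\cn(\varrho)\dn(\varrho))$ is positive. Hence the small semicircle inside $P^+$ is mapped to a large arc in $\bH^-$. The image contour then winds once around each point of $\bH^-$ inside that arc and zero times around points of $\bH^+$.

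Two minor remarks. First, $\varphi(\conj{u})=\conj{\varphi(u)}$ comes from $\varphi$ being real on $\R$ (reflection principle), not from evenness. Second, your identity-principle argument for $\varphi^{-1}$ covers $\widehat{\C}\setminus\cc{b_1,b_4}$. On $\oo{b_2,b_3}$ the inverse jumps between the top and bottom edges of the rectangle. There the formula holds only with the convention that $\sn^{-1}$ takes its values in $P$, a convention that is equally implicit in the paper.
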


\begin{figure}[t!]
{\centering
\begin{tikzpicture}[
declare function = {
b1 = -2; b2 = -1; b3 = 0.5; b4 = 3;
k = 0.8660;
K = 2.1565;
Kp = 1.6858;
rho = 1.3053;
opac1 = 0.2;
opac2 = 0.4;
}]


\draw[->] (-0.5,0) -- (3.5,0) node[right]{$\re$};
\draw[->] (0,-2) -- (0,2) node[above]{$\im$};

\draw[fill=blue, opacity=opac2] (0,0) -- (K,0) -- (K,Kp) -- (0,Kp) -- (0,0);
\draw[fill=blue, opacity=opac1] (0, 0) -- (K,0) -- (K,-Kp) -- (0,-Kp) -- (0,0);

\draw[ultra thick] (0,0) -- (K,0) -- (K,Kp) -- (0,Kp) -- (0,0);
\draw[ultra thick] (0, 0) -- (K,0) -- (K,-Kp) -- (0,-Kp) -- (0,0);

\draw[ultra thick, mid arrow, green!60!black] (rho,0) -- (K,0);
\draw[ultra thick, mid arrow, teal!60!black] (K,0) -- (K,Kp);
\draw[ultra thick, mid arrow, cyan!60!black] (K,Kp) --(0,Kp);
\draw[ultra thick, mid arrow, blue!60!black] (0,Kp) -- (0,0);
\draw[ultra thick, mid arrow, violet!80!black] (0,0) -- (rho,0);

\node[above left] at (0,0) {$0$};
\node[above right] at (K,0) {$\cK$};
\node[left] at (0,Kp) {$\ii \cK'$};
\node[right] at (K,Kp) {$\cK + \ii \cK'$};
\node[left] at (0,-Kp) {$- \ii \cK'$};
\node[right] at (K,-Kp) {$\cK - \ii \cK'$};
\draw[ultra thick] (rho,0.1)--(rho,-0.1) node[below]{$\varrho$};

\node at (K/2, Kp/2) {$P^+$};
\node at (K/2, -Kp/2) {$P^-$};


\begin{scope}[shift={(6.5,0)}]
\fill[blue, opacity=opac2] (0, 0) rectangle (5, 2);
\fill[blue, opacity=opac1] (0, 0) rectangle (5, -2);

\draw[->] (-1,0) -- (5,0) node[right]{$\re$};
\draw[->] (0,-2) -- (0,2) node[above]{$\im$};

\draw[->, ultra thick] (0,-2) -- (0,2);
\draw[->, ultra thick] (0,0) -- (5,0);

\draw[ultra thick, mid arrow, green!60!black] (1.4,0) -- (2,0);
\draw[ultra thick, mid arrow, teal!60!black] (2,0) -- (3,0);
\draw[ultra thick, mid arrow, cyan!60!black] (3,0) -- (5,0);
\draw[ultra thick, mid arrow, blue!60!black] (0,2) -- (0,0);
\draw[ultra thick, mid arrow, violet!80!black] (0,0) -- (1.4,0);

\node[above left] at (0,0) {$0$};
\foreach \point/\pointlabel in { {1.4}/{$\sn(\varrho)$}, 2/{$1$}, 
3/{$1/k$} }
\draw[ultra thick] (\point,0.1)--(\point,-0.1) node[below]{\pointlabel};

\node at (4, 1.2) {$\sn(u)$};
\end{scope}


\begin{scope}[shift={(0,-5)}]
\fill[blue, opacity=opac2] (-1.5, 0) rectangle (4.5, 2);
\fill[blue, opacity=opac1] (-1.5, 0) rectangle (4.5, -2);

\draw[-] (-1.5,0) -- (4.5,0) node[right]{$\re$};
\draw[->] (0,-2) -- (0,2) node[above]{$\im$};

\draw[ultra thick, mid arrow, green!60!black] (1.7,0) -- (2,0);
\draw[ultra thick, mid arrow, teal!60!black] (2,0) -- (3.5,0);
\draw[ultra thick, mid arrow, ->, cyan!60!black] (3.5,0) -- (4.5,0);
\draw[ultra thick, mid arrow, blue!60!black] (-1.5,0) -- (0,0);
\draw[ultra thick, mid arrow, violet!80!black] (0,0) -- (1.7,0);

\node[above left] at (0,0) {$0$};
\draw[ultra thick] (2,-0.1)--(2,0.1) node[above]{$1$};
\foreach \point/\pointlabel in { 1.7/{$\sn^2(\varrho)$}, {3.5}/{$1/k^2$} }
\draw[ultra thick] (\point,0.1)--(\point,-0.1) node[below]{\pointlabel};

\node at (3.4, 1.2) {$\sn^2(u)$};
\end{scope}


\begin{scope}[shift={(8,-5)}]
\fill[blue, opacity=opac1] (-2.5, 0) rectangle (3.5, 2);
\fill[blue, opacity=opac2] (-2.5, 0) rectangle (3.5, -2);

\draw[-] (-2.5,0) -- (3.5,0) node[right]{$\re$};
\draw[->] (0,-2) -- (0,2) node[above]{$\im$};

\draw[ultra thick, ->] (-2.5,0) -- (3.5,0); 
\draw[ultra thick, mid arrow, green!60!black] (3.5,0) -- (b4,0);
\draw[ultra thick, mid arrow, teal!60!black] (b4,0) -- (b3,0);
\draw[ultra thick, mid arrow, cyan!60!black] (b3,0) -- (b2,0);
\draw[ultra thick, mid arrow, blue!60!black] (b2,0) -- (b1,0);
\draw[ultra thick, mid arrow, violet!80!black] (b1,0) -- (-2.5,0);

\foreach \point/\pointlabel in { b1/{$b_1$}, b2/{$b_2$}, b3/{$b_3$}, 
b4/{$b_4$} }
\draw[ultra thick] (\point,0.1)--(\point,-0.1) node[below]{\pointlabel};

\node at (2.4, 1.2) {$\varphi(u)$};
\end{scope}

\end{tikzpicture}

}
\caption{Illustration of the map $\varphi$ in Lemma~\ref{lem:varphi}.
Top left: $u$-plane.  Top right: $\sn(u)$.
Bottom left: $\sn^2(u)$.  Bottom right: $\varphi(u)$.}
\label{fig:varphi}
\end{figure}

In the next theorem, we recall the representation of the Green's function $g_E$ and the formulas for the logarithmic capacity $\capacity(E)$ and the critical point $z_*$ of $g_E$ with the help of Jacobi's elliptic and theta functions. In the case $b_1 = -1$, $b_4 = 1$, Akhiezer obtained formula~\eqref{eqn:gE} in~\cite[\S{}5]{Akhiezer1932}, see also Fischer~\cite[Thm.~3.3.16]{Fischer1996} for a more recent presentation, \eqref{eqn:capacity_elliptic} in~\cite{Akhiezer1930}, and~\eqref{eqn:z*} in~\cite[p.~1184, (12)]{Akhiezer1932}. For completeness and convenience of the reader, we give a comprehensive and more modern proof in the appendix.


\begin{theorem} \label{thm:Green_function_2_intervals}
For given $E=\cc{b_1, b_2} \cup \cc{b_3, b_4}$, let $k$ be as in~\eqref{eqn:k}, 
let $\varrho=\lambda\cK$ be defined by~\eqref{eqn:lambda}, and let $\varphi$, 
$P$, $P^\pm$ be as in Lemma~\ref{lem:varphi}.
\begin{enumerate}
\item \label{it:green_function}
The Green's function of $\comp{E}$ can be represented by
\begin{equation} \label{eqn:gE}
g_E(z) = \log \left\lvert \frac{\Eta(\varrho + u)}{\Eta(\varrho - u)} \right\rvert, \quad z \in \C \setminus E,
\end{equation}
where $z = \varphi(u)$, $u \in P$.

\item \label{it:logarithmic_capacity}
The logarithmic capacity of $E$ is
\begin{equation} \label{eqn:capacity_elliptic}
\capacity(E)
= \frac{(b_4 - b_1) (b_3 - b_1)}{4 (b_2 - b_1)} \frac{\Theta^4(0)}{\Theta^4(\varrho)}
= \frac{(b_4 - b_1) (b_2 - b_1)}{4 (b_3 - b_1)} \frac{\Theta_1^4(0)}{\Theta_1^4(\varrho)}.
\end{equation}

\item \label{it:critical_point_gE}
Green's function $g_E$ has exactly one critical point given by
\begin{equation}\label{eqn:z*}
z_* = b_2+\sqrt{(b_3-b_1)(b_4-b_2)}\,\zn(\varrho),
\end{equation}
which satisfies $z_* \in \oo{b_2,b_3}$.
\end{enumerate}
\end{theorem}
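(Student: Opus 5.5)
The plan is to pull everything back to the $u$-plane via the conformal map $\varphi : P \to \comp{E}$ of Lemma~\ref{lem:varphi} and to work with
\[
F(u) \coloneq \log\frac{\Eta(\varrho+u)}{\Eta(\varrho-u)}, \qquad u \in P.
\]

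For \ref{it:green_function}, set $h(u) \coloneq \log\lvert \Eta(\varrho+u)/\Eta(\varrho-u)\rvert$ and check the three defining properties of the Green's function for $h\circ\varphi^{-1}$.

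First, harmonicity. $\Eta$ has its zeros exactly at $2m\cK + 2n\iK$. The only zero of $\Eta(\varrho-u)$ in $\overline{P}$ is therefore $u=\varrho$. Since $\lambda\in\oo{0,1}$, the zeros of $\Eta(\varrho+u)$ ($u=-\varrho$ modulo periods) lie outside $\overline{P}$. Hence $h$ is harmonic on $P\setminus\{\varrho\}$.

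Second, the logarithmic pole. Near $u=\varrho$ we have $h(u) = -\log\lvert u-\varrho\rvert + O(1)$. Because $\varphi$ has a simple pole at $\varrho$, $\lvert z\rvert \sim c/\lvert u-\varrho\rvert$, so $h(\varphi^{-1}(z)) = \log\lvert z\rvert + O(1)$.

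Third, the boundary values. The set $E$ corresponds to the segments $\cc{0,\iK}$ and $\cc{\cK,\cK+\iK}$. On $\re u = 0$, the oddness of $\Eta$ and its realness on $\R$ give $\Eta(\varrho - u) = \Eta(\varrho+\conj{u})$, so the modulus of the quotient is $1$. On $\re u = \cK$, write $u=\cK+\ii t$ and use $\Eta(2\cK - v) = \Eta(v)$ (from $\Eta(v+2\cK)=-\Eta(v)$ and oddness) to get the same conclusion.

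Single-valuedness of $h$ across the identification $\im u = \pm\cK'$ in $P$ follows from the quasi-periodicity $\Eta(v+2\iK) = -q^{-1}e^{-\pi\ii v/\cK}\Eta(v)$. Applying it to numerator and denominator, the exponential factors have unit modulus ratio, so $h(u+2\iK)=h(u)$. Uniqueness of the Green's function then gives \eqref{eqn:gE}.

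For \ref{it:logarithmic_capacity}, compute the constant term: $g_E(z) = \log\lvert z\rvert - \log\capacity(E) + o(1)$. Expand at $u=\varrho$:
\[
h(u) = -\log\lvert u-\varrho\rvert + \log\frac{\lvert \Eta(2\varrho)\rvert}{\lvert \Eta'(0)\rvert} + o(1),
\]
and $\varphi(u) \sim \mathrm{Res}_\varrho\varphi/(u-\varrho)$ with $\mathrm{Res}_\varrho\varphi = (b_2-b_1)\sn^2(\varrho)/(2\sn(\varrho)\cn(\varrho)\dn(\varrho))$. This gives
\[
\capacity(E) = \lvert \mathrm{Res}_\varrho\varphi\rvert \,\Eta'(0)/\Eta(2\varrho).
\]
We then convert to the stated forms using the duplication-type identity $\Eta(2\varrho)\Theta(0)^{\ldots}$. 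Concretely, one expresses $\Eta(2\varrho)/\Eta'(0)$ through $\sn(2\varrho) = 2\sn\cn\dn/(1-k^2\sn^4)$ and $\Theta(2\varrho)\Theta^3(0)$-type formulas, together with $\Eta'(0) = \sqrt{2k\cK/\pi}\,\cdots$, and substitutes \eqref{eqn:lambda}, \eqref{eqn:cndn_rho} and \eqref{eqn:k}. The second expression follows from $\Theta_1(\varrho)/\Theta(\varrho) = \dn(\varrho)/\sqrt{k'}$ together with \eqref{eqn:cndn_rho}. I expect this theta-function bookkeeping (the duplication identities relating $\Eta(2\varrho)$ to $\Theta^4(\varrho)$) to be the main obstacle.

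For \ref{it:critical_point_gE}, note that critical points of $g_E$ in $\C\setminus E$ correspond to zeros of
\[
F'(u) = \frac{\Eta'}{\Eta}(\varrho+u) + \frac{\Eta'}{\Eta}(\varrho-u),
\]
since $\varphi'\neq 0$ in $P$ away from the pole. Using $\Eta'/\Eta(v) = \zn(v) + \cn(v)\dn(v)/\sn(v)$ and the addition formula for $\zn$, one shows that $F'$ is elliptic of order $2$. By symmetry $F'$ vanishes at $u = \iK$-shifted points; we seek the zero on the top edge $u=s+\iK$, which maps to $\oo{b_2,b_3}$. Count: $F'$ has two poles, at $\pm\varrho$, so it has two zeros per period cell. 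These are $u_*$ and $-u_*$, which give the same $z$, so there is exactly one critical point. To locate it, solve $F'(u_*)=0$ via the $\zn$ addition theorem, obtaining $\zn(\varrho)$ expressed in terms of $\sn^2(u_*)$. Substituting into $\varphi$ and simplifying with \eqref{eqn:cndn_rho} yields \eqref{eqn:z*}, and $z_*\in\oo{b_2,b_3}$ holds because $u_*$ lies on the edge $\cc{\iK,\cK+\iK}$.
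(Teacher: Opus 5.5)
Parts (i) and (ii) follow the paper's proof: you check harmonicity, the logarithmic pole at $u=\varrho$, and $\abs{f}=1$ on $\re u\in\{0,\cK\}$, then read $\log\capacity(E)$ off the constant term at $u=\varrho$. Your treatment of the $2\iK$-quasi-periodicity is more careful than the paper's one-line harmonicity claim. The theta bookkeeping you flag as the main obstacle is short. From $\sn'(0)=1$ and \eqref{eqn:sncndn-Theta} you get $\Eta'(0)=\sqrt{k}\,\Theta(0)$, and \eqref{eqn:Eta(2u)} gives $\Eta(2\varrho)$. Your formula $\capacity(E)=\abs{\mathrm{Res}_\varrho\varphi}\,\Eta'(0)/\Eta(2\varrho)$ then becomes $\frac{b_2-b_1}{4\cn^2(\varrho)\dn^2(\varrho)}\frac{\Theta^4(0)}{\Theta^4(\varrho)}$, and \eqref{eqn:cndn_rho} finishes. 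For the second form you also need $\Theta(0)=\sqrt{k'}\,\Theta_1(0)$, which follows from $\dn(0)=1$. The paper avoids the residue altogether by cancelling $\Eta(\varrho-u)$ against $\sn^2(u)-\sn^2(\varrho)$ via \eqref{eqn:snsn-Eta}.

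The genuine gap is in (iii). Counting zeros of the even elliptic function $F'$ of order $2$ only shows that its zeros form one orbit $\pm u_*$. That gives \emph{at most} one critical point, namely $\varphi(u_*)$, and only if $\varphi(u_*)\notin E$. Existence and $z_*\in\oo{b_2,b_3}$ both rest on your claim that $u_*$ lies on the top edge. You assert this (``by symmetry'') and then cite it as the reason for the location, so the argument is circular. The paper does not prove this step either: it imports existence, uniqueness and location from \cite[Thm.~2.8\,(i)]{SchiefermayrSete2023} and only solves $F'=0$.

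To make your self-contained route work, add the following argument.
\begin{itemize}
\item By \eqref{eqn:dlogf}, $F'(u)=0$ if and only if $\sn^2(u)=s_*\coloneq\sn^2(\varrho)+\sn(\varrho)\cn(\varrho)\dn(\varrho)/\zn(\varrho)$.
\item Since $\zn(\varrho)>0$ and $1-k^2\sn^2=\dn^2$, the inequality $s_*>1/k^2$ is equivalent to $\zn(\varrho)-k^2\sn(\varrho)\cn(\varrho)/\dn(\varrho)<0$. By \eqref{eqn:znK} the left side is $\zn(\varrho+\cK)$, and it is negative because $\zn(\varrho+\cK)=-\zn(\cK-\varrho)$ with $\cK-\varrho\in\oo{0,\cK}$.
\item $\sn^2$ maps the open top edge onto $\oo{1/k^2,\infty}$. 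The M\"obius map $T$ with $\varphi=T\circ\sn^2$ (from the appendix) maps this interval onto $\oo{b_2,b_3}$.
\end{itemize}
This gives existence, uniqueness and location at once, and $z_*=T(s_*)$ simplifies to \eqref{eqn:z*}. Equivalently, $F'$ is real on the top edge with $F'(\iK)=2\zn(\varrho)>0$ and $F'(\cK+\iK)=2\zn(\varrho+\cK)<0$.
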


\begin{remark}
In~\cite[Thm.~3.1]{Schiefermayr2015_Zolotarev}, the logarithmic capacity of a certain set consisting of two Jordan arcs with endpoints $b_1, b_2, b_3, b_4 \in \C$ is obtained using the same technique as in the proof of Theorem~\ref{thm:Green_function_2_intervals}\,\ref{it:logarithmic_capacity}.
\end{remark}

Based on~\eqref{eqn:gE} and Theorem~\ref{thm:Eta_quotient}, we give a representation of the complex Green's function $G_E$ in terms of Jacobi's theta function $\Eta$; see Section~\ref{sect:elliptic_functions}.

\begin{theorem} \label{thm:GE_two_intervals}
In the notation of Theorem~\ref{thm:Green_function_2_intervals},
the complex Green's function $G_E$ can be represented by
\begin{equation} \label{eqn:GE}
G_E(z) = \log \left( \frac{\Eta(\varrho + u)}{\Eta(\varrho - u)} \right), \quad z \in \C \setminus E,
\end{equation}
where $z = \varphi(u)$, $u \in P$.
The single-valued branch of $G_E$ in $\C \setminus \co{b_1, \infty}$ with 
$G_E(b_1) = 0$ from Theorem~\ref{thm:GE} can be represented by~\eqref{eqn:GE} 
with $u \in P^+ \cup P^-\cup \oo{0,\varrho}$ and with the principal branch of 
the logarithm.  Then, $\log \Big( \frac{\Eta(\varrho+u)}{\Eta(\varrho-u)} \Big)$ is real for 
$u \in \oo{0, \varrho}$ with
\begin{equation}
G_E(b_1) = \lim_{\substack{u \to 0 \\ u \in P^+ \cup P^-\cup \oo{0,\varrho}}} 
\log \left( \frac{\Eta(\varrho + u)}{\Eta(\varrho - u)} \right) = 0,
\end{equation}
and
\begin{equation}
G_E^\pm(b_2) = G_E^\pm(b_3) = \mp \ii \pi (1 - \lambda)
\quad \text{and} \quad
G_E^\pm(b_4) = \mp \ii \pi.
\end{equation}
\end{theorem}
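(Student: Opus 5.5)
Since $\varphi : P \to \comp{E}$ is conformal (Lemma~\ref{lem:varphi}), the function $F(u) \coloneq \Eta(\varrho+u)/\Eta(\varrho-u)$ is analytic and zero-free on $P$ away from $u = \varrho$. Indeed, the zeros of $\Eta$ are at $2m\cK + 2n\iK$, so $\Eta(\varrho \pm u)$ vanishes only at $u = \mp\varrho$ modulo periods, and in the closure of $P$ only $u = \varrho$ (the preimage of $\infty$) occurs. Hence $\log F(u(z))$ is a (multi-valued) analytic function on $\C\setminus E$. By Theorem~\ref{thm:Green_function_2_intervals}\,\ref{it:green_function}, its real part is $\log\abs{F} = g_E$, so it differs from $G_E$ only by an additive imaginary constant; that is, it is a complex Green's function, which gives~\eqref{eqn:GE}.

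To identify the branch, I would work on $\Omega \coloneq P^+\cup P^-\cup\oo{0,\varrho}$, which is mapped by $\varphi$ onto $\C\setminus\co{b_1,\infty}$ (by~\eqref{eqn:varphi_Pplus} and~\eqref{eqn:phi_boundary_correspondence}), and is simply connected. On $\oo{0,\varrho}$, $\Eta$ is real (real $q$-series) and $\Eta(\varrho\pm u)>0$ since $0<\varrho\pm u<2\cK$ and $\Eta$ is positive on $\oo{0,2\cK}$. Hence $F>0$ there and the principal logarithm is real, with $F(0)=1$ so the limit at $u\to0$ is $0$. To justify the principal branch on all of $\Omega$, I would show $F(u)\notin\oc{-\infty,0}$ on $\Omega$. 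This would be done via Theorem~\ref{thm:Eta_quotient}, which presumably describes $\arg F$ (or equivalently the imaginary part of $\log F$) on $P^\pm$ and confines it to $\oo{-\pi,\pi}$. Consequently $\log F$ is continuous on $\Omega$, real on $\oo{0,\varrho}$, and vanishes at $b_1$. By uniqueness of the branch in Theorem~\ref{thm:GE}\,\ref{it:GE_branch} (analytic functions agreeing on the segment $\varphi(\oo{0,\varrho}) = \oo{-\infty,b_1}$), it coincides with that branch.

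For the boundary values, I would compute $\log F$ along $\cc{\iK,\cK+\iK}$ and at $\cK$ using the quasi-periodicity $\Eta(v+2\iK) = -q^{-1}\ee^{-\pi\ii v/\cK}\Eta(v)$ and oddness of $\Eta$. At $u=\iK + t$ we get $\varrho - u = -(u - \varrho)$ and $\varrho+u = (\varrho + t - \iK) + 2\iK$, giving $F = -\ee^{-\pi\ii(\varrho+t-\iK)/\cK}\Eta(\varrho+t-\iK)/\Eta(t+\iK-\varrho)\cdot q^{-1}$, whose modulus is $1$ at the endpoints and whose argument at $t=0$ reduces to $-\pi(1-\lambda)$ modulo $2\pi$ (the factor $\ee^{-\pi\ii\varrho/\cK}=\ee^{-\ii\pi\lambda}$ times $-1$). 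Approaching from $P^+$ corresponds to $\bH^-$ under $\varphi$, so this gives $G_E^{-}(b_2)=\ii\pi(1-\lambda)$ once the correct representative in $\oo{-\pi,\pi}$ is fixed by continuity, and $G_E^+$ follows by conjugation symmetry (Theorem~\ref{thm:GE}\,\ref{it:GE_branch}). At $u=\cK$, $\Eta(\varrho+\cK)/\Eta(\varrho-\cK) = \Eta(\cK+\varrho)/(-\Eta(\cK-\varrho))=-1$ by the symmetry $\Eta(\cK+v)=\Eta(\cK-v)$, so $\log F\to\mp\ii\pi$ from the two sides. A cross-check with Theorem~\ref{thm:GE}\,\ref{it:GE_values_on_Ij} gives $\mu_E(E_1)=1-\lambda$ consistently, and $G_E^\pm(b_3)=G_E^\pm(b_2)$ is automatic from~\eqref{eqn:GE_at_bj}.

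The main obstacle is fixing the correct multiple of $2\pi\ii$ in these boundary values. This is exactly where the branch-control statement (Theorem~\ref{thm:Eta_quotient}) on $\arg F$ in $P^\pm$ is needed, rather than the quasi-periodicity computation, which only determines the values modulo $2\pi$.
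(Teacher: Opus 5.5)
Your route is the paper's: \eqref{eqn:GE} from \eqref{eqn:gE}; the branch from Theorem~\ref{thm:Eta_quotient}\,\ref{it:f_on_domain}, the mapping properties of $\varphi$, and the identity principle on $\oo{-\infty,b_1}$; $b_2$ from the quasi-periodicity of $\Eta$; and $b_3$ from Theorem~\ref{thm:GE}.

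\textbf{Sign error at $b_2$.} As written, your computation does not yield the value you state. In $F(\iK+t)$, the factor $-1$ in $\Eta(w+2\iK) = -q^{-1}\ee^{-\ii\pi w/\cK}\Eta(w)$ cancels the $-1$ coming from $\Eta(\varrho-u) = -\Eta(t+\iK-\varrho)$. So there is no leading minus sign, and in fact
$F(\iK+t) = \ee^{-\ii\pi(\varrho+t)/\cK}\,\Eta(\varrho+t-\iK)/\Eta(t+\iK-\varrho)$.
At $t=0$ this gives $F(\iK) = -\ee^{-\ii\pi\lambda} = \ee^{\ii\pi(1-\lambda)}$, whose argument is $+\pi(1-\lambda)$, not $-\pi(1-\lambda)$. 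With your stated value, the representative in $\oo{-\pi,\pi}$ is $-\pi(1-\lambda)$ itself, which would give $G_E^-(b_2) = -\ii\pi(1-\lambda)$. No choice of representative can flip that sign, so your written conclusion does not follow from your intermediate step. Fix the two signs and it does.

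\textbf{The \emph{main obstacle} is not an obstacle at $b_2$.} $F$ is analytic and nonzero near $\iK$, and $F(\iK) = \ee^{\ii\pi(1-\lambda)}$ lies off $\oc{-\infty,0}$ because $0<\lambda<1$. Hence the principal logarithm simply passes to the limit, and there is no multiple of $2\pi\ii$ to fix. The paper gets the value in one line from \eqref{Eta(uiK)} at $u=\varrho$, namely $\Eta(\varrho+\iK)/\Eta(\varrho-\iK) = -\ee^{-\ii\pi\varrho/\cK}$, without going through the $2\iK$ quasi-periodicity.

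\textbf{Where branch control is actually needed: $b_4$.} Here $F(\cK) = -1$ lies on the cut. Your direct computation needs $F(P^\pm)\subseteq\bH^\pm$ from Theorem~\ref{thm:Eta_quotient}\,\ref{it:image_of_rectangle} to decide the sign. The paper avoids this by reading $G_E^\pm(b_4) = \mp\ii\pi$ off \eqref{eqn:GE_at_bj}.
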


\begin{proof}
Formula~\eqref{eqn:GE} follows from~\eqref{eqn:gE}.
The assertions about the branch follow from Theorem~\ref{thm:GE} and 
Theorem~\ref{thm:Eta_quotient}\,\ref{it:f_on_domain} in combination with the 
mapping properties of $\varphi$ in Lemma~\ref{lem:varphi}.
Since $G_E^+(b_2) = \conj{G_E^-(b_2)}$, we only have to consider $G_E^-(b_2)$.
By Lemma~\ref{lem:varphi},
\begin{equation} \label{eqn:GEb2minus}
G_E^-(b_2)
= \lim_{\substack{z \to b_2 \\ z \in \bH^-}} G_E(z)
= \lim_{\substack{u \to \ii \cK' \\ u \in P^+}} G_E(\varphi(u))
= \log \left( \frac{\Eta(\varrho + \ii \cK')}{\Eta(\varrho - \ii \cK')} \right).
\end{equation}
By~\eqref{Eta(uiK)},
\begin{equation*}
\frac{\Eta(\varrho + \ii \cK')}{\Eta(\varrho - \ii \cK')}
= - \ee^{- \ii \pi \varrho / \cK}
= - \ee^{- \ii \pi \lambda}
= \ee^{\ii \pi (1 - \lambda)},
\end{equation*}
hence $G_E^-(b_2) = \ii \pi (1 - \lambda)$.
The values $G_E^\pm(b_3)$ and $G_E^\pm(b_4)$ are obtained with 
Theorem~\ref{thm:GE}.
\end{proof}


\begin{remark}
\begin{enumerate}
\item Figure~\ref{fig:GE} displays phase plots of $f(u) = \Eta(\varrho+u)/\Eta(\varrho-u)$ 
and of the complex Green's function $G_E(\varphi(u))$, $u \in P$, from 
Theorem~\ref{thm:GE_two_intervals}.
In a phase plot of a complex function $f$, the domain is colored
according to its phase $f/\abs{f}$; see~\cite{Wegert2012, 
WegertSemmler2011}.
In Figure~\ref{fig:GE}, the scale of the $y$-axis is roughly half that of the 
$x$-axis ($\cK = 1.9080$, $\cK' = 1.8067$ rounded to four digits).
In the left panel, we see the simple pole of $f$ at $\varrho$ and that 
$f(u) > 0$ on $\co{0, \varrho}$ and $f(u) < 0$ on $\oc{\varrho, \cK}$; compare 
Theorem~\ref{thm:Eta_quotient}\,\ref{it:f_on_boundary}.
In the right panel, we see the zero of $G_E(\varphi(u))$ at $u = 0$ and a
branch cut of $G_E(\varphi(u))$ along the interval $\cc{\varrho, \cK}$.

\begin{figure}
{\centering
\includegraphics[width=0.48\linewidth]{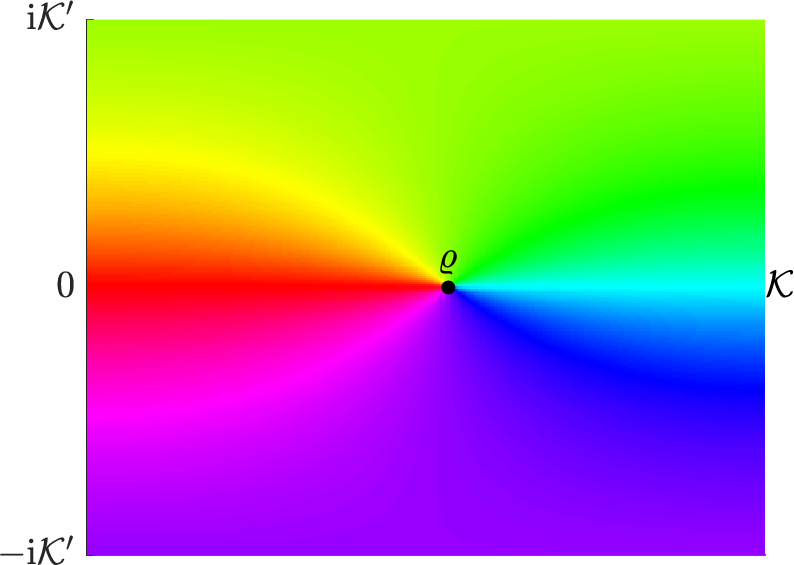}
\includegraphics[width=0.48\linewidth]{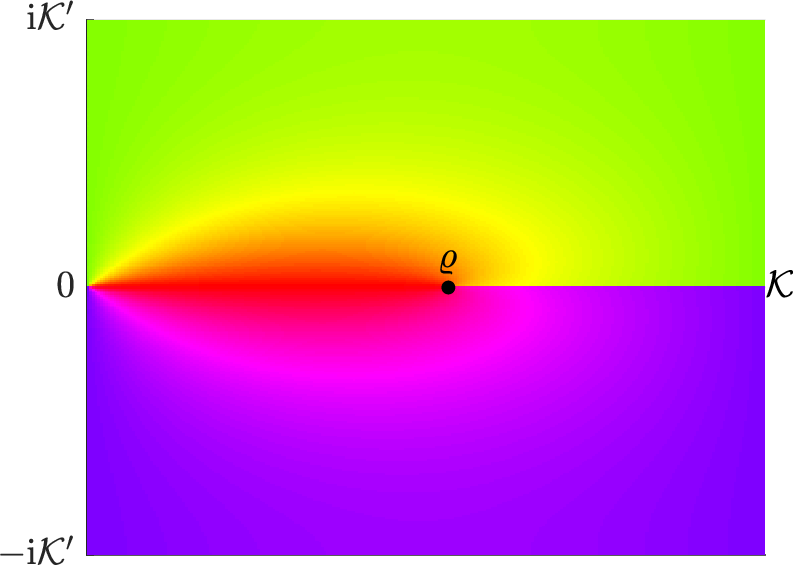}

}
\caption{Phase plot of $\Eta(\varrho+u)/\Eta(\varrho-u)$ (left) and of the 
complex Green's function $G_E(\varphi(u)) = \log \Big( 
\frac{\Eta(\varrho+u)}{\Eta(\varrho-u)} \Big)$ (right) for $u \in P$ and for 
the set $E = \cc{-1, -0.3} \cup \cc{0.1, 1}$.}
\label{fig:GE}
\end{figure}

\item The equilibrium measure $\mu_E$ can also be represented by
\begin{equation} \label{eqn:density_elliptic}
\dd \mu_E(x)
= \frac{2}{\pi \ii} \left( \zn(\varrho) + \frac{\sn(\varrho) \cn(\varrho) \dn(\varrho)}{\sn^2(\varrho) - \sn^2(u)} \right) \dd u
\end{equation}
for $x = \varphi(u) \in E$ and $u \in \cc{0, \ii \cK'} \cup \cc{\cK, \cK + \ii \cK'}$.
The formula can be obtained from~\eqref{eqn:density_muE} or from
$\dd \mu_E(x) = \frac{1}{\pi} \frac{\abs{x - z_*}}{\sqrt{\abs{H(x)}}} \, \dd x$; see~\cite[Thm.~4.2\,(iii)]{SchiefermayrSete2025}.
\end{enumerate}
\end{remark}

Next, we give an explicit formula for the critical value of the Green's function.  The result is essentially contained in~\cite{Schiefermayr2011}, in particular see the proofs of Theorem~3 and Theorem~4 in~\cite{Schiefermayr2011}.

\begin{theorem} \label{thm:critical_value_of_gE}
Let $E = \cc{b_1, b_2} \cup \cc{b_3, b_4}$ be as in~\eqref{eqn:E},
let $k$, $\varrho$ be given as in~\eqref{eqn:k} and~\eqref{eqn:lambda},
and let $z_*$ be the critical point of Green's function $g_E$ given in~\eqref{eqn:z*}, then
\begin{equation} \label{eqn:crit_val_gE}
g_E(z_*) = \log \left( \frac{\Theta(\varrho + v_*)}{\Theta(\varrho - v_*)} \right)
\end{equation}
with the real logarithm and
\begin{equation} \label{eqn:vstar}
v_* = \sn^{-1} \left( \sqrt{\frac{\zn(\varrho)}{k^2 \sn(\varrho) \big( \sn(\varrho) \zn(\varrho) + \cn(\varrho) \dn(\varrho) \big) }} \right).
\end{equation}
\end{theorem}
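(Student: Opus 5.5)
The plan is to compute $g_E(z_*)$ from the representation \eqref{eqn:gE} by locating the preimage $u_* = \varphi^{-1}(z_*)$ in $P$ and then turning the $\Eta$-quotient at $u_*$ into the $\Theta$-quotient at a real point.

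\emph{Step 1: locate $u_*$.} By Theorem~\ref{thm:Green_function_2_intervals}\,\ref{it:critical_point_gE}, $z_*\in\oo{b_2,b_3}$. By the boundary correspondence \eqref{eqn:phi_boundary_correspondence}, $\varphi$ maps the segment $\cc{\iK,\cK+\iK}$ bijectively onto $\cc{b_2,b_3}$. Hence $u_* = v_* + \iK$ for a unique $v_*\in\oo{0,\cK}$.

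\emph{Step 2: identify $v_*$.} Since $u\mapsto g_E(\varphi(u))$ is critical at $u_*$ and $\varphi'(u_*)\ne 0$, the derivative of $\log(\Eta(\varrho+u)/\Eta(\varrho-u))$ vanishes there. Using $\Eta'/\Eta$ expressed through $\zn$ and the addition formula for $\zn$, this derivative equals
\[
2\Bigl(\zn(\varrho)+\frac{\sn(\varrho)\cn(\varrho)\dn(\varrho)}{\sn^2(\varrho)-\sn^2(u)}\Bigr),
\]
consistent with \eqref{eqn:density_elliptic}. Setting it to zero gives
\[
\sn^2(u_*)=\sn^2(\varrho)+\frac{\sn(\varrho)\cn(\varrho)\dn(\varrho)}{\zn(\varrho)}.
\]
Now apply $\sn(v+\iK)=1/(k\sn v)$, i.e.\ $\sn^2(v_*)=1/(k^2\sn^2(u_*))$. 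This yields
\[
\sn^2(v_*)=\frac{\zn(\varrho)}{k^2\sn(\varrho)\bigl(\sn(\varrho)\zn(\varrho)+\cn(\varrho)\dn(\varrho)\bigr)},
\]
which is exactly \eqref{eqn:vstar}.

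Alternatively, one can insert $z_*$ from \eqref{eqn:z*} directly into \eqref{eqn:varphi_inv} and simplify with \eqref{eqn:cndn_rho}. I prefer the derivative route because it avoids radicals. One check is still needed: the right-hand side must lie in $\oo{0,1}$, so that the real $\sn^{-1}$ makes sense. This follows from $\zn(\varrho)>0$ for $0<\varrho<\cK$ together with $k^2\sn^2(u_*)>1$, the latter because $\sn(u_*)$ lies in $\oo{1,1/k}$ in reciprocal form.

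\emph{Step 3: convert $\Eta$ to $\Theta$.} Use the shift relation $\Eta(w+\iK)=\ii\,q^{-1/4}\ee^{-\ii\pi w/(2\cK)}\Theta(w)$ (the relation behind \eqref{Eta(uiK)}). With $w=\varrho+v_*$ in the numerator and $w=\varrho-v_*$ in the denominator (after writing $\varrho-u_*=\varrho-v_*-\iK$ and using oddness/periodicity to move to $+\iK$), the quotient $\Eta(\varrho+u_*)/\Eta(\varrho-u_*)$ becomes $\Theta(\varrho+v_*)/\Theta(\varrho-v_*)$ times a unimodular exponential factor. Taking the modulus, as in \eqref{eqn:gE}, kills that factor. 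Since $\Theta$ is positive on $\R$, we obtain \eqref{eqn:crit_val_gE} with the real logarithm.

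\emph{Main obstacle.} I expect the hardest part to be the bookkeeping of signs and exponential factors in the $\iK$-shift of $\Eta$, in particular handling $\varrho-v_*-\iK$ consistently. Only the modulus matters, which helps. The second delicate point is the derivative identity for $\Eta$-quotients in Step 2, which I would take from the $\zn$ formulas in Section~\ref{sect:elliptic_functions}.
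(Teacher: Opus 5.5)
Your proof is correct and follows essentially the same route as the paper, which also places $u_* = v_* + \ii\cK'$ on the top edge of $P$, computes $\sn^2(v_*+\ii\cK')$, inverts with \eqref{eqn:snK}, and removes the unimodular factors from \eqref{Eta(uiK)}. The only difference is that the paper obtains $\sn^2(u_*)$ by substituting \eqref{eqn:z*} into \eqref{eqn:phi}, whereas you set \eqref{eqn:dlogf} to zero, which is the same computation the appendix uses to derive \eqref{eqn:z*}. One small slip: on the top edge, $\sn(u_*) = 1/(k\sn(v_*)) \in \oo{1/k,\infty}$, not $\oo{1,1/k}$. Your range check is in any case redundant, because Step~1 already gives $v_*\in\oo{0,\cK}$.
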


\begin{proof}
Since $z_* \in \oo{b_2, b_3}$ and by Lemma~\ref{lem:varphi}, there exists a unique
$v_* \in \oo{0, \cK}$ such that $\varphi(v_* + \ii \cK') = z_*$.
By~\eqref{eqn:phi} and~\eqref{eqn:z*}, the latter is equivalent to
\begin{equation*}
\sn^2(v_* + \ii \cK') = \frac{\sn(\varrho) \bigl( \sn(\varrho) \zn(\varrho) + \cn(\varrho) \dn(\varrho) \bigr)}{\zn(\varrho)}.
\end{equation*}
By~\eqref{eqn:snK} and Lemma~\ref{lem:sn-bijective}, we obtain~\eqref{eqn:vstar}.
By~\eqref{eqn:gE} and~\eqref{Eta(uiK)},
\begin{equation*}
g_E(z_*) = \log \abs*{ \frac{\Eta(\varrho + v_* + \ii \cK')}{\Eta(\varrho - v_* - \ii \cK')} }
= \log \left( \frac{\Theta(\varrho + v_*)}{\Theta(\varrho - v_*)} \right),
\end{equation*}
which gives the assertion.
\end{proof}

Now, we are in position to explicitly express all parameters of the 
lemniscatic domain $\comp{L}$ in Theorem~\ref{thm:walsh_map} with the help of 
Jacobi's elliptic and theta functions.

\begin{theorem} \label{thm:lemniscatic_domain_elliptic}
Let $E = \cc{b_1, b_2} \cup \cc{b_3, b_4}$ with $b_1 < b_2 < b_3 < b_4$, 
let $k$ be as in~\eqref{eqn:k},
let $\lambda$ and $\varrho = \lambda \cK$ be as in~\eqref{eqn:lambda},
and let $\comp{L}$ be the lemniscatic domain from Theorem~\ref{thm:walsh_map} 
corresponding to $\comp{E}$, given by
\begin{equation}
L = \{ w \in \C : \abs{w - a_1}^{m_1} \abs{w - a_2}^{m_2} \leq \capacity(E) \}.
\end{equation}
Then, the exponents are
\begin{equation} \label{eqn:m1m2_lambda}
m_1 = 1-\lambda, \quad m_2 = \lambda,
\end{equation}
the logarithmic capacity $\capacity(E)$ is given 
in~\eqref{eqn:capacity_elliptic}, and the centers are
\begin{equation} \label{eqn:aj}
a_1 = \alpha - \lambda \beta, \quad a_2 = \alpha + (1 - \lambda) \beta,
\end{equation}
where
\begin{equation} \label{eqn:alpha_for_2_intervals}
\alpha = \frac{1}{2} (b_1 + b_2 + b_3 + b_4) - z_*
\end{equation}
with $z_*$ in~\eqref{eqn:z*}, and
\begin{equation} \label{eqn:beta_for_2_intervals}
\beta = \frac{(b_4 - b_1)(b_3 - b_1)}{4 (1-\lambda)^{1 - \lambda} \lambda^\lambda (b_2 - b_1)} \frac{\Theta^4(0)}{\Theta^4(\varrho)}
\frac{\Theta(\varrho + v_*)}{\Theta(\varrho - v_*)}
\end{equation}
with $v_*$ in~\eqref{eqn:vstar}.
\end{theorem}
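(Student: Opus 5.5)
The plan is to determine the four parameters $m_1, m_2, a_1, a_2$ in three steps. First I read off the exponents from the boundary values of $G_E$. Next I compare the expansions of $G_E$ and $G_L$ at infinity to obtain the weighted mean $m_1 a_1 + m_2 a_2$. Finally I compare critical values to obtain the distance $a_2 - a_1$. The capacity is already given by Theorem~\ref{thm:Green_function_2_intervals}\,\ref{it:logarithmic_capacity}.

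\emph{Exponents.} By \eqref{eqn:mj_muE}, $m_j = \mu_E(E_j)$. Theorem~\ref{thm:GE}\,\ref{it:GE_values_on_Ij} gives $G_E^-(b_2) = \ii\pi\mu_E(E_1)$, while Theorem~\ref{thm:GE_two_intervals} gives $G_E^-(b_2) = \ii\pi(1-\lambda)$. Hence $m_1 = 1-\lambda$, and $m_2 = 1 - m_1 = \lambda$.

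\emph{Mean of the centers.} By \eqref{eqn:complex_Greens_function_ell_intervals} with $\ell = 2$, we have $G_E'(z) = R(z)/\sqrt{H(z)}$. Here $R$ is a real polynomial of degree $1$ with leading coefficient $1$, because $g_E(z) = \log\abs{z} + \cO(1)$ at infinity, and its zero is the critical point. So $R(z) = z - z_*$. Expanding at $\infty$,
\begin{equation*}
G_E'(z) = \frac{1}{z}\Bigl(1 - \frac{z_*}{z}\Bigr)\Bigl(1 + \frac{b_1+b_2+b_3+b_4}{2z} + \cO(z^{-2})\Bigr) = \frac{1}{z} + \frac{\alpha}{z^2} + \cO(z^{-3}),
\end{equation*}
with $\alpha$ as in \eqref{eqn:alpha_for_2_intervals}. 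Hence $G_E(z) = \log z - \log\capacity(E) - \alpha/z + \cO(z^{-2})$ up to an additive imaginary constant. On the other hand, \eqref{eqn:GL} gives
\begin{equation*}
G_L(w) = \log w - \log\capacity(E) - \frac{m_1a_1+m_2a_2}{w} + \cO(w^{-2}).
\end{equation*}
Since $\Phi(z) = z + \cO(1/z)$, we get $G_L(\Phi(z)) = \log z - \log\capacity(E) - (m_1a_1+m_2a_2)/z + \cO(z^{-2})$. Differentiating the identity $G_L\circ\Phi = G_E$ from Theorem~\ref{thm:GL} (differentiating removes the branch constants) and comparing the $z^{-2}$ coefficients gives $m_1a_1 + m_2a_2 = \alpha$.

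\emph{Distance of the centers.} Since $\Phi$ is conformal and $g_E = g_L\circ\Phi$, critical points of $g_E$ correspond to critical points of $g_L$, and the critical values agree. The only critical point of $g_L$ solves $m_1/(w-a_1) + m_2/(w-a_2) = 0$, that is, $w_* = m_2a_1 + m_1a_2$. Set $\beta \coloneq a_2 - a_1$ and take $a_1 < a_2$, which holds by the labelling in \eqref{eqn:real_parts_of_L}. Then $w_* - a_1 = m_1\beta$ and $a_2 - w_* = m_2\beta$, so
\begin{equation*}
g_L(w_*) = \log\bigl(m_1^{m_1}m_2^{m_2}\beta\bigr) - \log\capacity(E).
\end{equation*}
Setting this equal to $g_E(z_*)$ from Theorem~\ref{thm:critical_value_of_gE}, and inserting $\capacity(E)$ from \eqref{eqn:capacity_elliptic} together with $m_1 = 1-\lambda$ and $m_2 = \lambda$, yields exactly \eqref{eqn:beta_for_2_intervals}. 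Solving $(1-\lambda)a_1 + \lambda a_2 = \alpha$ and $a_2 - a_1 = \beta$ then gives \eqref{eqn:aj}.

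\emph{Main obstacle.} The delicate step is the asymptotic comparison. One has to justify that $R(z) = z - z_*$ is monic, and that the sign convention $\sqrt{H(z)} \sim z^2$ produces the coefficient $+\tfrac12\sum_j b_j$. One also has to make sure the additive branch constants do not interfere; this is why I compare derivatives, or equivalently real parts via $g_E = g_L\circ\Phi$, instead of $G_E$ and $G_L$ themselves.
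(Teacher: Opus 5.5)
Your proof is correct, and the exponent step is exactly the paper's argument. That argument compares $G_E^-(b_2)=\ii\pi\mu_E(\cc{b_1,b_2})$ from Theorem~\ref{thm:GE} with $G_E^-(b_2)=\ii\pi(1-\lambda)$ from Theorem~\ref{thm:GE_two_intervals}.

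For the centers, the paper gives no argument of its own. It quotes \cite[Thm.~3.1 and Thm.~4.5]{SchiefermayrSete2025}, which express $a_1,a_2$ in terms of $m_1,m_2$, $z_*$, $\capacity(E)$ and $g_E(z_*)$. It then only substitutes the elliptic formulas \eqref{eqn:capacity_elliptic} and \eqref{eqn:crit_val_gE}. You instead derive the two underlying relations directly:
\begin{itemize}
\item $m_1a_1+m_2a_2=\frac12(b_1+b_2+b_3+b_4)-z_*$, from the $z^{-2}$ coefficient of $G_E'=(G_L'\circ\Phi)\,\Phi'$ at infinity, using $R(z)=z-z_*$ and $\sqrt{H(z)}\sim z^2$;
\item $a_2-a_1=\capacity(E)\,\ee^{g_E(z_*)}/(m_1^{m_1}m_2^{m_2})$, from $\Phi(z_*)=w_*$ and $g_L(w_*)=g_E(z_*)$.
\end{itemize}
Your computations check out, including the sign $\beta>0$, which follows from $a_1<c_2<c_3<a_2$.

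Your route gives a self-contained proof that shows where $\alpha$ and $\beta$ come from, at the cost of some length. The paper's proof is shorter but leaves the substance to the earlier work.

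One point is worth stating explicitly: why $\Phi(z_*)=w_*$. Since $\Phi'(z_*)\neq0$, the point $\Phi(z_*)$ is a zero of $\partial_w g_L$, and $w_*$ is the only zero of $\partial_w g_L$ in $\C$.
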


\begin{proof}
By~\eqref{eqn:mj_muE}, $m_1 = \mu_E(\cc{b_1, b_2})$ and $m_2 = 
\mu_E(\cc{b_3, b_4})$.
By Theorem~\ref{thm:GE}\,\ref{it:GE_values_on_Ij}, we have $G_E^-(b_2) = \ii \pi 
\mu_E(\cc{b_1, b_2})$,
and by Theorem~\ref{thm:GE_two_intervals}, $G_E^-(b_2) = 
\ii \pi (1 - \lambda)$.
This shows the first part of~\eqref{eqn:m1m2_lambda}.
The second part follows from $m_2 = 1 - m_1$.
The remaining formulas follow from~\cite[Thm.~3.1 and Thm.~4.5]{SchiefermayrSete2025} by inserting the expressions for 
$\capacity(E)$ from~\eqref{eqn:capacity_elliptic} and $g_E(z_*)$ from Theorem~\ref{thm:critical_value_of_gE}.
\end{proof}

\begin{remark}
If the lengths of the intervals are equal, i.e., if $b_4 - b_3 = b_2 - b_1$, then, since $\sn(\frac{1}{2} \cK) = 1/\sqrt{1+k'}$, the parameter $\lambda$ in~\eqref{eqn:lambda} is $\lambda = \frac{1}{2}$.
Moreover, $\capacity(E) = \frac{1}{2} \sqrt{(b_4-b_3) (b_4-b_2)}$ and $z_* = \frac{1}{2} (b_2+b_3)$, since $\Theta^4(0)/\Theta^4(\frac{1}{2} \cK) = 2 (k')^{3/2} / (1+k')$, see~\cite[Lem.~2]{Schiefermayr2008}, and $\zn(\frac{1}2{} \cK) = \frac{1}{2} (1-k')$ (from~\eqref{eqn:zn_add} with $u = v = \frac{1}{2} \cK$).
This case has also been considered in~\cite[Sect.~3]{SeteLiesen2016}.
\end{remark}

\section{Computation of Walsh's conformal map}\label{sect:conformal_map}

Let us discuss the computation of the conformal map $\Phi$ onto a lemniscatic domain for sets $E$ consisting of two real intervals. An analytic formula for $\Phi$ is known only for two symmetric intervals, $E = \cc{-D, -C} \cup \cc{C, D}$ with $0 < C < D$, see~\cite[Cor.~3.3]{SeteLiesen2016}, and for certain polynomial pre-images, see~\cite[Prop.~3.6, Cor.~3.7]{SchiefermayrSete2023} and~\cite[Thm.~3.3]{SchiefermayrSete2024}. A numerical method for computing $\Phi$ if $E$ consists of several intervals was derived in~\cite[Sect.~5]{SchiefermayrSete2025}. For sets $E$ bounded by $C^2$-smooth Jordan curves, a numerical method for computing $\Phi$ has been derived in~\cite{NasserLiesenSete2016}, and this method can be extended to domains with corners by using a suitable discretization of the boundary, see~\cite{NasserLiesenSete2016}, and, for a more detailed description,~\cite{LiesenSeteNasser2017}. Here, we propose a numerical method similar to the one in~\cite{SchiefermayrSete2025} but based on the real and complex Green's functions and the elliptic characterization.

The relation of the Green's functions of $\comp{E}$ and $\comp{L}$ via the conformal map $\Phi$ leads to an equation whose unique solution is $\Phi(z)$.

\begin{theorem} \label{thm:Phiz}
Let $E = \cc{b_1, b_2} \cup \cc{b_3, b_4}$ with $b_1 < b_2 < b_3 < b_4$.
\begin{enumerate}
\item \label{it:Phi_complex}
For $z \in \C \setminus \R$, $\Phi(z)$ is the unique solution $w \in \C \setminus \R$ of the equation
\begin{equation} \label{eqn:Phi_complex}
G_L(w) = \sum_{j=1}^2 m_j \log(a_j - w) - \log(\capacity(E))
= G_E(z)
\end{equation}
with the principal branch of the logarithm.

\item \label{it:Phi_real}
For $z \in \R \setminus E$, $\Phi(z)$ is the unique solution $w$ of the equation
\begin{equation} \label{eqn:Phi_real}
g_L(w) = \sum_{j=1}^2 m_j \log(\abs{w - a_j}) - \log(\capacity(E))
= g_E(z)
\end{equation}
with $w$ restricted to a suitable interval depending on $z$:
\begin{itemize}
\item $w \in \oc{-\infty, c_1}$ if $z \in \oc{-\infty, b_1}$,
\item $w \in \co{c_2, w_*}$ if $z \in \co{b_2, z_*}$,
\item $w = w_*$ if $z = z_*$,
\item $w \in \oc{w_*, c_3}$ if $z \in \oc{z_*, b_3}$,
\item $w \in \co{c_4, \infty}$ if $z \in \co{b_4, \infty}$.
\end{itemize}
Here, $w_* = m_1 a_2 + m_2 a_1$ is the unique critical point of $g_L$.
\end{enumerate}
\end{theorem}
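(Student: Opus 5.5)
Part \ref{it:Phi_complex}: existence comes from Theorem~\ref{thm:GL}\,\ref{it:GL_GE_relation}, and uniqueness from the injectivity of $G_L$ on the relevant domain. The first step is to check that $w = \Phi(z)$ solves \eqref{eqn:Phi_complex}. For $z \in \C \setminus \R \subseteq \C \setminus (E \cup \co{b_1,\infty})$, Theorem~\ref{thm:GL}\,\ref{it:GL_GE_relation} gives $G_L(\Phi(z)) = G_E(z)$, with $G_L$ the principal-log branch \eqref{eqn:GL_branch}. By \cite[Thm.~2.1]{SchiefermayrSete2024}, which was already used in that proof, $\Phi$ preserves the upper and lower half-planes, so $\Phi(z) \in \C\setminus\R$. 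Note that $\C \setminus \R$ lies in $\C \setminus (L \cup \co{c_1,\infty})$ only after we exclude points of $L$. Since $G_L(w)$ has real part $g_L(w) \le 0$ on $L$, while $g_E(z) > 0$, any solution $w$ automatically lies outside $L$.

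The second step is uniqueness, which is the main point. Suppose $w \in \C\setminus\R$ solves \eqref{eqn:Phi_complex}. Then $w \notin L$ by the real-part argument above, so $w = \Phi(z')$ for some $z' \in \comp{E}$. Since $w \notin \R$ and $\Phi$ maps $\R \setminus E$ into $\R$, we have $z' \in \C\setminus\R$. Hence $G_E(z') = G_L(w) = G_E(z)$, and it remains to show that $G_E$ is injective on $\C \setminus \R$.

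I would argue the injectivity of $G_E$ via the elliptic representation. By Lemma~\ref{lem:varphi}, $z = \varphi(u)$ with $u \in P^+ \cup P^-$. The function $u \mapsto \log(\Eta(\varrho+u)/\Eta(\varrho-u))$ is the branch from Theorem~\ref{thm:GE_two_intervals}. By Theorem~\ref{thm:GE}\,\ref{it:GE_values_on_Ij}, the imaginary part of $G_E$ lies in $\oo{0,\pi}$ on $\bH^-$ and in $\oo{-\pi,0}$ on $\bH^+$: it is harmonic, its boundary values lie in $\cc{0,\pi}$ resp.\ $\cc{-\pi,0}$, and it is bounded, so the maximum principle applies. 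So the two half-planes are separated, and it suffices to treat $\bH^+$.

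On $\bH^+$, $G_E$ maps onto the half-strip $\{\re > 0,\ -\pi < \im < 0\}$ minus a slit. This is the standard picture: the boundary correspondence from Theorem~\ref{thm:GE} is a monotone traversal of the boundary of that slit half-strip, so the argument principle gives a bijection. The cleaner alternative is to work directly on the $L$-side. On $\bH^+\setminus L$, the map $G_L(w)=\sum m_j\log(a_j-w)-\log\capacity(E)$ has derivative vanishing only at $w_*\in\R$. It is injective there by the argument principle, because its boundary values trace the boundary of the image domain once, by \eqref{eqn:GL_bdry_values_notgaps} and \eqref{eqn:GL_bdry_values_gaps}. This injectivity of $G_L$ on each half-plane minus $L$ is the step I expect to be the main obstacle. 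I would carry it out by tracking the boundary curve of $\bH^+\setminus L$ under $G_L$.

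Part \ref{it:Phi_real} follows from $g_L(\Phi(z)) = g_E(z)$ (see \eqref{eqn:gE_gL}) together with monotonicity. The map $\Phi$ maps $\R\setminus E$ onto $\R\setminus L$, preserving order, with $\Phi(b_j)=c_j$. Since $g_E$ has its unique critical point $z_*$ in $\oo{b_2,b_3}$, we get $\Phi(z_*) = w_*$. On each of the listed intervals, $g_L$ is strictly monotone: it is increasing on $\oc{-\infty,c_1}$ reversed, monotone on $\co{c_2,w_*}$ and on $\oc{w_*,c_3}$, and increasing on $\co{c_4,\infty}$. This holds because $g_L'(u)=m_1/(u-a_1)+m_2/(u-a_2)$ vanishes only at $w_*$. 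Hence the solution of \eqref{eqn:Phi_real} in each interval is unique and equals $\Phi(z)$.
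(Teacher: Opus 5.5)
Your argument is correct in outline, and part~(ii) is the paper's argument: $g_L(\Phi(z)) = g_E(z)$, the real-line mapping properties of $\Phi$, and piecewise strict monotonicity of $g_L$. For uniqueness in part~(i), however, you take a genuinely different and much heavier route.

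\emph{The paper's route.} The paper works with $G_L$ on the \emph{whole} half-plane, including $L \cap \bH^+$. From $\im G_L(w) = \sum_j m_j \arg(a_j - w)$ one gets $G_L(\bH^\pm) \subseteq \bH^\mp$. On the convex set $\bH^+$ one has $\im G_L'(w) = -\sum_j m_j \im(w)/\abs{w - a_j}^2 < 0$. Hence $G_L$ is univalent there by the Noshiro--Warschawski-type criterion \cite[Lem.~2.1]{EremenkoYuditskii2012}: for $w_1 \neq w_2$, $G_L(w_2) - G_L(w_1) = (w_2 - w_1) \int_0^1 G_L'(w_1 + t(w_2 - w_1)) \, \dd t$, and the integral has negative imaginary part. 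The same holds on $\bH^-$. This single estimate replaces all three of your steps: excluding $L$ via $\re G_L \leq 0$ on $L$, transferring to injectivity of $G_E$ through $w = \Phi(z')$, and the argument-principle count.

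\emph{What your route gives, and what it still needs.} Your route yields the explicit image $G_E(\bH^+) = \{\zeta \in \C : \re(\zeta) > 0,\ -\pi < \im(\zeta) < 0\} \setminus \cc{-\ii\pi m_1, g_E(z_*) - \ii\pi m_1}$. As written, though, the decisive injectivity step is only announced. If you carry it out, do it on the $E$-side. There the monotone boundary traversal really is supplied by Theorem~\ref{thm:GE}, Corollary~\ref{cor:GE_ell_intervals} (positive density of $\mu_E$ on $E$), and the uniqueness of the critical point $z_*$. On the $L$-side, \eqref{eqn:GL_bdry_values_notgaps} by itself does not show that $\sum_j m_j \arg(a_j - w)$ is monotone along $\partial L \cap \bH^+$. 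You would also have to:
\begin{itemize}
\item truncate by a large semicircle, where $G_E(z) = \log(-z) - \log(\capacity(E)) + \cO(1/z)$;
\item cope with $G_E$ being merely continuous at $b_1, \ldots, b_4$;
\item rule out separately (e.g.\ via the open mapping theorem) interior preimages of points on the slit, because the argument principle only counts preimages of values off the boundary curve.
\end{itemize}
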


\begin{proof}
The proof is similar to the proofs of~\cite[Thm.~5.1 and Thm.~5.3]{SchiefermayrSete2025}.

\ref{it:Phi_complex}
Let $z \in \C \setminus \R$.
By Theorem~\ref{thm:GL}\,\ref{it:GL_GE_relation}, $\Phi(z)$ is a solution of~\eqref{eqn:Phi_complex}.
In order to show that the solution is unique, we show that $G_L$ is univalent on $\C \setminus \R$.
Since $\im(G_L(w)) = \sum_{j=1}^2 m_j \arg(a_j - w)$ with the principal branch of the argument, the function $G_L$ satisfies $G_L(\bH^\pm) \subseteq \bH^\mp$ and it is enough to show that $G_L$ is univalent on $\bH^+$ and on $\bH^-$.
The derivative of $G_L$ is $G_L'(w) = \sum_{j=1}^2 m_j/(w-a_j)$, which implies 
$\im(G_L'(w)) = - \sum_{j=1}^2 m_j \im(w)/\abs{w-a_j}^2$ since $a_1, a_2$ are real.
Thus, $\im(G_L'(w)) < 0$ for $w \in \bH^+$, which, by~\cite[Lem.~2.1]{EremenkoYuditskii2012}, shows that $G_L$ is univalent in $\bH^+$.
Similarly, $G_L$ is univalent on $\bH^-$.
Thus, $w = \Phi(z)$ is the unique solution of~\eqref{eqn:Phi_complex}.

\ref{it:Phi_real}
That $\Phi(z)$ is a solution of~\eqref{eqn:Phi_real} follows from $g_L(\Phi(z)) = g_E(z)$, see~\cite{Walsh1958}, \cite[Eqn.~(2.2)]{SeteLiesen2016}, or~\cite[Eqn.~(2.2)]{SchiefermayrSete2023}.
The corresponding intervals in the $z$- and $w$-plane follow from the mapping properties of $\Phi$; see~\cite[Thm.~2.1]{SchiefermayrSete2024}.
The uniqueness of the solution follows from the piecewise strict monotonicity of $g_L : \R \setminus E \to \R$:
\begin{equation*}
g_L'(w) = \sum_{j=1}^2 \frac{m_j}{w - a_j}
\begin{cases}
< 0, & \text{if } w < c_1 < a_1, \\
> 0, & \text{if } c_2 < w < w_*, \\
= 0, & \text{if } w = w_*, \\
< 0, & \text{if } w_* < w < c_3, \\
> 0, & \text{if } w > c_4,
\end{cases}
\end{equation*}
compare also~\cite[Thm.~5.3]{SchiefermayrSete2025}.
\end{proof}

We propose to compute the values $\Phi(z)$ by solving the corresponding equation in Theorem~\ref{thm:Phiz}.  This leads to the following two steps.
\begin{enumerate}
\item The Green's functions are evaluated using the elliptic characterization. For $z \in \R \setminus E$,
\begin{equation}
g_E(z) = \log \abs*{ \frac{\Eta(u + \varrho)}{\Eta(u - \varrho)} }, \quad u = \varphi^{-1}(z),
\end{equation}
with the real logarithm, see Theorem~\ref{thm:Green_function_2_intervals}\,\ref{it:green_function}, and for $z \in \C \setminus \R$,
\begin{equation}
G_E(z) = \log \left( \frac{\Eta(u + \varrho)}{\Eta(u - \varrho)} \right), \quad u = \varphi^{-1}(z),
\end{equation}
with the principal branch of the logarithm; see Theorem~\ref{thm:GE_two_intervals}.
Alternatively, $g_E(z)$ and $G_E(z)$ can be evaluated using their integral 
representation, as was done in~\cite[Sect.~5]{SchiefermayrSete2025}.

\item The value $\Phi(z)$ is obtained by solving the nonlinear equation in  Theorem~\ref{thm:Phiz} with a damped Newton iteration and initial point $w_0$ depending on $z$:
\begin{equation}
w_0 =
\begin{cases}
z, & \text{if } z \in \C \setminus \R, \\
z, & \text{if } z \in \oo{-\infty, b_1} \text{ or } z \in \oo{b_4, \infty}, \\
c_2 + \frac{z - b_2}{z_* - b_2} (w_* - c_2), & \text{if } z \in \oo{b_2, z_*}, \\
w_* + \frac{z-z_*}{b_3 - z_*} (c_3 - w_*), & \text{if } z \in \oo{z_*, b_3}.
\end{cases}
\end{equation}
\end{enumerate}

\begin{figure}
{\centering
\includegraphics[width=0.48\linewidth]{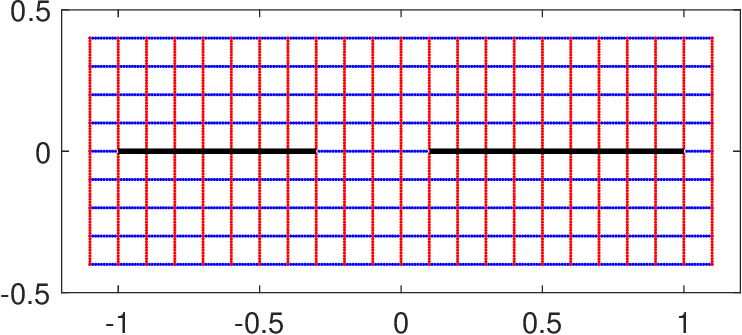}
\includegraphics[width=0.48\linewidth]{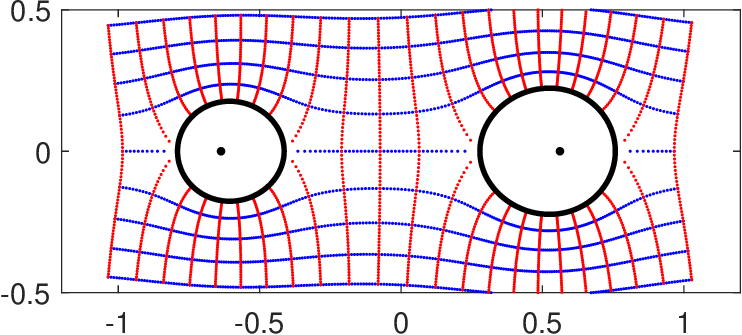}
}
\caption{The conformal map $\Phi$ onto the lemniscatic domain for the 
complement 
of $E = \cc{-1, -0.3} \cup \cc{0.1, 1}$.  Left: Set $E$ (black) and a grid.  
Right: Set $\partial L$ (black) and the image of the grid under $\Phi$.  The 
black dots are the computed centers $a_1, a_2$.}
\label{fig:numerical_map}
\end{figure}

\begin{example} \label{ex:numerical_map}
Consider the union of two intervals $E = \cc{-1, -0.3} \cup \cc{0.1, 1}$ and 
the conformal map $\Phi : \comp{E} \to \comp{L}$ onto a lemniscatic domain.
The parameters of $L$ are computed as indicated in 
Theorem~\ref{thm:lemniscatic_domain_elliptic} and are (rounded to four digits)
\begin{equation*}
m_1 = 0.4671, \quad m_2 = 0.5329, \quad a_1 = -0.6366, \quad a_2 = 0.5619, \quad
\capacity(E) = 0.4898. 
\end{equation*}
We also compute the image of a grid under $\Phi$, where $\Phi(z)$ is evaluated as outlined above; see~Figure~\ref{fig:numerical_map}.

To assess the accuracy of our numerical method, we compare it with two 
methods from the literature.

(i) The method from~\cite{NasserLiesenSete2016} computes Walsh's map and the 
lemniscatic domain for domains with smooth boundary by solving a boundary 
integral equation (BIE) with the Neumann kernel.  Since $E$ does not have a
smooth boundary, we successively apply inverse Joukowsky maps to obtain a 
conformally equivalent domain with a smooth boundary.  Then, the BIE method is 
applied to this smooth domain.
The parameters of the lemniscatic domain $\comp{L}$ computed with the BIE 
method and with our new elliptic method agree to machine precision, more 
precisely,
\begin{align*}
\max \{ \abs{a_1 - a_1^{\text{BIE}}}, \abs{a_2 - a_2^{\text{BIE}}} \} &= 6.7 
\cdot 10^{-16}, \\
\max \{ \abs{m_1 - m_1^{\text{BIE}}}, \abs{m_2 - m_2^{\text{BIE}}} \} &= 5.6 
\cdot 10^{-17}, \\
\abs{\capacity(E) - \capacity(E)^{\text{BIE}}} &= 1.7 \cdot 10^{-16}.
\end{align*}
This suggests that both methods are extremely accurate and yield the exact 
result up to machine precision.

(ii) The method from~\cite{SchiefermayrSete2025} computes Walsh's map and the lemniscatic domain for $\ell$ intervals by solving a nonlinear equation derived from the Green's function, and uses the integral representation of Green's function for the complement of several intervals. The parameters of $\comp{L}$ computed with the method from~\cite{SchiefermayrSete2025} and with the proposed elliptic method agree up to order $10^{-13}$, more precisely, the absolute differences between the centers, exponents and logarithmic capacity are $3.1 \cdot 10^{-13}$, $5.1 \cdot 10^{-14}$, and $2.3 \cdot 10^{-13}$, respectively. Again, this suggests that both methods are very accurate. In the method~(ii), the Green's function is given as an integral, where the integrand has singularities at each endpoint of the intervals of integration. We suspect that the numerical evaluation of these integrals is exact up to order $10^{-13}$ in our implementation, which leads to the differences of order $10^{-13}$ compared to the new method.
\end{example}

\section{Jacobi's elliptic functions}\label{sect:elliptic_functions}

In this section, we collect some properties of Jacobi's theta and elliptic 
functions.  Additionally, we prove a result about the function 
$\Eta(v+u)/\Eta(v-u)$ in Theorem~\ref{thm:Eta_quotient}, that is needed in our 
investigation of the complex Green's function in 
Theorem~\ref{thm:GE_two_intervals}.

Let $k \in \oo{0, 1}$ be the modulus,
$k' \coloneq \sqrt{1-k^2} \in \oo{0, 1}$ be the complementary modulus,
$\cK \equiv \cK(k)$ be the complete elliptic integral of the first kind,
and $\cK' \equiv \cK'(k) \coloneq \cK(k')$ be the associated complete 
elliptic integral of the first kind.  Note that $\cK, \cK' > 0$.

Then, Jacobi's theta functions $\Theta(u) \equiv \Theta(u, k)$, $\Eta(u) \equiv \Eta(u, k)$, $\Eta_1(u) \equiv \Eta_1(u, k)$, $\Theta_1(u) \equiv \Theta_1(u, k)$ can be expressed with the help of the Fourier series
\begin{equation}
\begin{aligned}
\Theta(u) &\equiv \theta_4(v)
\coloneq 1 + 2 \sum_{n=1}^\infty (-1)^n q^{n^2} \cos(2nv), \\
\Eta(u) &\equiv \theta_1(v)
\coloneq 2 \sum_{n=0}^\infty (-1)^n q^{(n+1/2)^2} \sin((2n+1)v), \\
\Eta_1(u) &\equiv \theta_2(v)
\coloneq 2 \sum_{n=0}^\infty q^{(n+1/2)^2} \cos((2n+1)v), \\
\Theta_1(u) &\equiv \theta_3(v)
\coloneq 1 + 2 \sum_{n=1}^\infty q^{n^2} \cos(2nv),
\end{aligned}
\end{equation}
where $v = \pi u/(2\cK)$ and $q=\exp(-\pi\cK'/\cK)$,
see~\cite[1050.01]{ByrdFriedman1971} or \cite[p.~5]{Lawden1989}.
Here $\Eta$ denotes the Greek capital eta.
In view of~\eqref{eqn:sncndn-Theta}, we prefer to use Jacobi's old notation $\Theta(u), \Eta(u), \Eta_1(u), \Theta_1(u)$ instead of the more recent notation $\theta_1(v), \theta_2(v), \theta_3(v)$, $\theta_4(v)$.

With the help of these functions, one can define Jacobi's elliptic functions $\sn(u)\equiv\sn(u,k)$, $\cn(u)\equiv\cn(u,k)$, $\dn(u)\equiv\dn(u,k)$ by
\begin{equation}\label{eqn:sncndn-Theta}
\sn(u) \coloneq \frac{1}{\sqrt{k}} \frac{\Eta(u)}{\Theta(u)}, \quad
\cn(u) \coloneq \frac{\sqrt{k'}}{\sqrt{k}} \frac{\Eta_1(u)}{\Theta(u)}, \quad
\dn(u) \coloneq \sqrt{k'} \frac{\Theta_1(u)}{\Theta(u)},
\end{equation}
see~\cite[1052.02]{ByrdFriedman1971}.
All these functions are seen as functions of the variable $u$ but also depend 
on the modulus $k$.
For simplicity of notation, we drop the argument $k$ when there is no 
possibility of misunderstanding and write $\Eta(u), \sn(u), \cn(u)$~etc.\ 
instead of $\Eta(u,k), \sn(u,k), \cn(u,k)$ etc.
Jacobi's elliptic functions satisfy the well-known 
identities~\cite[121.00]{ByrdFriedman1971}
\begin{equation} \label{eqn:sncndn}
\sn^2(u)+\cn^2(u)=1, \quad
k^2\sn^2(u)+\dn^2(u)=1, \quad
\dn^2(u) - k^2 \cn^2(u) = {k'}^2
\end{equation}
and~\cite[122.03, 122.07]{ByrdFriedman1971}
\begin{equation} 
\sn(u+\cK) = \frac{\cn(u)}{\dn(u)}, \quad
\sn(u+\iK) = \frac{1}{k \sn(u)},
\label{eqn:snK}
\end{equation}
and~\cite[125.02, 120.02]{ByrdFriedman1971}
\begin{equation}
\sn(\ii u,k) = \ii \frac{\sn(u, k')}{\cn(u, k')}, \quad
\cn(\ii u, k) = \frac{1}{\cn(u, k')}, \quad
\dn(\ii u, k) = \frac{\dn(u, k')}{\cn(u, k')}. \label{eqn:sncndniu}
\end{equation}


The next lemma follows immediately from the fundamental properties of the conformal map $\sn$.


\begin{lemma}[{\cite[Sect.~6.24]{AndersonVamanamurthyVuorinen1997}}] \label{lem:sn-bijective}
For fixed $k \in \oo{0, 1}$, the function $\sn : \cc{0, \cK} \to \cc{0, 1}$
is bijective and we denote its inverse by $\sn^{-1} : \cc{0, 1} \to \cc{0, \cK}$.
\end{lemma}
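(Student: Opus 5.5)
We must prove that $\sn:\cc{0,\cK}\to\cc{0,1}$ is bijective for $k\in\oo{0,1}$. The plan is to show that $\sn$ is real-valued and continuous on $\cc{0,\cK}$, takes the endpoint values $\sn(0)=0$ and $\sn(\cK)=1$, and is strictly increasing there. Bijectivity then follows from the intermediate value theorem together with strict monotonicity.

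For realness and continuity, use the definition~\eqref{eqn:sncndn-Theta}, $\sn(u)=k^{-1/2}\Eta(u)/\Theta(u)$. For real $u$, both Fourier series have real coefficients ($0<q<1$) and real arguments $v=\pi u/(2\cK)$, so $\Eta$ and $\Theta$ are real and continuous. Moreover,
\begin{equation*}
\Theta(u)\geq 1-2\sum_{n\ge1}q^{n^2}>0
\end{equation*}
whenever $q$ is small. In general, $\Theta$ has no real zeros, since its zeros lie at $\iK+2m\cK+2n\iK$; this is a standard fact from the product representation $\theta_4(v)=\prod_n(1-q^{2n})(1-2q^{2n-1}\cos 2v+q^{4n-2})$, and each factor is positive for real $v$. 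Hence $\sn$ is real and continuous on $\R$.

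For the endpoints, $\Eta(0)=0$ gives $\sn(0)=0$. Next, \eqref{eqn:snK} with $u=0$ gives $\sn(\cK)=\cn(0)/\dn(0)$. Here $\cn(0)=\dn(0)=1$ follows from \eqref{eqn:sncndn} together with continuity and positivity near $0$: $\cn(0)^2=1$, and by the product formulas $\cn(0)>0$, or more simply from $\sn(0)=0$ and $k^2\sn^2+\dn^2=1$ with $\dn=\sqrt{k'}\,\Theta_1/\Theta>0$. Thus $\sn(\cK)=1$.

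The main step is strict monotonicity, which relies on the differential identity $\sn'=\cn\,\dn$ (standard, e.g.\ \cite{ByrdFriedman1971}). On $\cc{0,\cK}$ we have $\dn>0$, since $\dn^2=1-k^2\sn^2\ge 1-k^2\sn^2$ and $\Theta_1,\Theta>0$ for real arguments by their product formulas. It therefore suffices to show $\cn>0$ on $\co{0,\cK}$. The zeros of $\cn$ are the zeros of $\Eta_1$, which on the real line are exactly the points $\cK+2m\cK$; this follows from $\theta_2(v)=2q^{1/4}\cos v\prod(\dots)$ with positive product factors. Hence $\cn$ has no zero in $\co{0,\cK}$, and since $\cn(0)=1$, continuity gives $\cn>0$ there. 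So $\sn'>0$ on $\co{0,\cK}$, $\sn$ is strictly increasing from $0$ to $1$, and the map is a bijection with continuous inverse $\sn^{-1}$.

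I expect the only delicate point to be locating the real zeros of $\Eta_1$ and $\Theta$. For this I would cite the Jacobi triple product for the theta functions rather than derive it, which matches the lemma's attribution to standard references.
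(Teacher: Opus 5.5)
Your proof is correct, but it follows a different route from the paper. The paper gives no argument of its own. It refers to \cite[Sect.~6.24]{AndersonVamanamurthyVuorinen1997} and says the lemma ``follows immediately from the fundamental properties of the conformal map $\sn$'': $\sn$ maps the rectangle $P^+$ conformally onto the open first quadrant, with $\sn:\co{0,\cK}\to\co{0,1}$ as part of the boundary correspondence. This is the same fact reused in the appendix proof of Lemma~\ref{lem:varphi}. Equivalently, $\sn$ on $\cc{0,\cK}$ is the inverse of the strictly increasing integral $x\mapsto\int_0^x \dd t/\sqrt{(1-t^2)(1-k^2t^2)}$ from $\cc{0,1}$ onto $\cc{0,\cK}$.

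You instead start from the theta-quotient definition~\eqref{eqn:sncndn-Theta} that the paper actually adopts. From it you prove continuity, the endpoint values via~\eqref{eqn:snK}, and $\sn'=\cn\,\dn>0$ on $\co{0,\cK}$. What each route buys:
\begin{itemize}
\item Your version is tied to the paper's own definitions. It also directly yields the strict monotonicity of $\sn$ that the paper later uses, e.g.\ in the proof of Theorem~\ref{thm:Eta_quotient}.
\item The price is that you import $\sn'=\cn\,\dn$. That identity is exactly the bridge between the theta definition and the elliptic integral, so the two routes rest on the same underlying fact.
\item The conformal-map route additionally gives the whole boundary correspondence needed elsewhere in the paper, at no extra cost.
\end{itemize}

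Two minor points.
\begin{itemize}
\item The inequality $\dn^2=1-k^2\sn^2\geq 1-k^2\sn^2$ is vacuous. This is harmless, since you really get $\dn>0$ from $\Theta_1,\Theta>0$.
\item You can dispense with the triple product. Lemma~\ref{Lemma-ThetaEta} already gives $\Theta>0$ on $\R$, $\Eta>0$ on $\oo{0,2\cK}$, and $\Eta(u+\cK)=\Eta_1(u)$. Hence $\Eta_1>0$, and so $\cn>0$, on $\oo{-\cK,\cK}$. The Fourier series also show $\Theta_1(u)=\Theta(u+\cK)>0$ for real $u$, which gives $\dn>0$.
\end{itemize}
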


In the following lemma, we collect some well-known properties of $\Eta(u)$ and $\Theta(u)$, see~\cite[Ch.~1]{Lawden1989}.


\begin{lemma}\label{Lemma-ThetaEta}
Let $k \in \oo{0,1}$, $\cK \equiv \cK(k)$, and $\cK' \equiv \cK(k')$.
\begin{enumerate}
\item Jacobi's theta function $\Eta(u)$ is an odd analytic function in $\C$ with period $4\cK$ and has its zeros at $u = 2 m \cK + 2 \ii m' \cK'$ with $m, m' \in \Z$, where all zeros are simple.  Furthermore, $\Eta(u) > 0$ for $u \in \oo{0, 2 \cK}$ and the following identities hold:
\begin{align}
\Eta(\conj{u}) &= \conj{\Eta(u)}, \label{Eta(ovu)} \\
\Eta(u \pm \cK) &= \pm \Eta_1(u), \label{Eta(uK)} \\
\Eta(u \pm 2 \cK) &= - \Eta(u), \label{Eta(u2K)} \\
\Eta(u \pm \iK) &= \pm \ii \ee^{\pi \cK'/(4\cK)} \ee^{\mp \ii \pi u/(2\cK)} \Theta(u), \label{Eta(uiK)} \\
\Eta(2 u) &= \frac{2 \sqrt{k} \Theta^4(u) \sn(u) \cn(u) \dn(u)}{\Theta^3(0)},\label{eqn:Eta(2u)} \\
\sn^2(u) - \sn^2(v) &= \frac{\Eta(u-v) \Eta(u+v) \Theta^2(0)}{k \Theta^2(u) \Theta^2(v)}.\label{eqn:snsn-Eta}
\end{align}

\item \label{it:Theta}
Jacobi's theta function $\Theta(u)$ is an even analytic function in $\C$ with period $2\cK$ and has its zeros at $u = \iK + 2 m \cK + 2 \ii m' \cK'$ with $m, m' \in \Z$, where all zeros are simple.  Furthermore, $0 < \Theta(0) \leq \Theta(u) \leq \Theta(\cK)$ for $u \in \R$.
\end{enumerate}
\end{lemma}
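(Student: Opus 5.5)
All of these properties are classical (see \cite[Ch.~1]{Lawden1989} and \cite{ByrdFriedman1971}). Our plan is to derive them directly from the Fourier series in the variable $v = \pi u/(2\cK)$ and the nome $q = \ee^{-\pi \cK'/\cK}$. A shift $u \mapsto u + \cK$ corresponds to $v \mapsto v + \pi/2$, and a shift $u \mapsto u + \iK$ corresponds to $v \mapsto v + \frac{\ii\pi\cK'}{2\cK}$, which multiplies $\ee^{\ii v}$ by $q^{-1/2}$.

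\textbf{Elementary identities.} Oddness of $\Eta$, evenness of $\Theta$, the periods $4\cK$ and $2\cK$, and the reflection identity~\eqref{Eta(ovu)} are immediate, since the series have real coefficients and are in $\sin((2n+1)v)$, resp.\ $\cos(2nv)$. From $\sin((2n+1)(v\pm\pi/2)) = \pm(-1)^n\cos((2n+1)v)$ we get~\eqref{Eta(uK)}. Applying~\eqref{Eta(uK)} twice, or using $\sin(x\pm\pi)=-\sin x$, gives~\eqref{Eta(u2K)}.

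\textbf{The shift by $\iK$.} For~\eqref{Eta(uiK)} we write
\[
\theta_1(v) = -\ii\sum_{n\in\Z}(-1)^n q^{(n+1/2)^2}\ee^{\ii(2n+1)v}.
\]
The shift replaces $q^{(n+1/2)^2}$ by $q^{(n+1/2)^2 \mp (n+1/2)}$. Completing the square gives $q^{n^2}q^{-1/4}$ (after reindexing $n \mapsto n\pm1$ where needed). This leaves exactly the $\theta_4$-series multiplied by $\pm\ii q^{-1/4}\ee^{\mp\ii v}$, i.e.\ the factor $\pm \ii\,\ee^{\pi\cK'/(4\cK)}\ee^{\mp\ii\pi u/(2\cK)}$.

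\textbf{Zeros, positivity and bounds.} Here we invoke the Jacobi triple product forms
\begin{align*}
\theta_1(v) &= 2q^{1/4}\sin v\prod_{n\ge1}(1-q^{2n})(1-2q^{2n}\cos 2v+q^{4n}), \\
\theta_4(v) &= \prod_{n\ge1}(1-q^{2n})(1-2q^{2n-1}\cos2v+q^{4n-2}).
\end{align*}
For $\Eta$, the factor $1-2q^{2n}\cos2v+q^{4n}$ vanishes exactly when $\ee^{2\ii v} = q^{\pm 2n}$. Together with $\sin v = 0$, this gives precisely the simple zeros $u = 2m\cK + 2\ii m'\cK'$. For $v\in\oo{0,\pi}$ every factor is positive, so $\Eta>0$ on $\oo{0,2\cK}$. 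The zeros of $\Theta$ then follow from~\eqref{Eta(uiK)}: they are the zeros of $\Eta$ shifted by $\iK$. For real $v$, each factor $1-2q^{2n-1}\cos 2v + q^{4n-2}$ is positive and increasing on $\cc{0,\pi/2}$. Hence $\Theta$ is positive and increasing on $\cc{0,\cK}$, and by evenness and $2\cK$-periodicity $\Theta(0)\le\Theta(u)\le\Theta(\cK)$ on $\R$.

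\textbf{Duplication and difference formulas.} For~\eqref{eqn:Eta(2u)} and~\eqref{eqn:snsn-Eta} we use~\eqref{eqn:sncndn-Theta} to rewrite everything in terms of theta functions. We then compare the two sides as meromorphic functions of $u$ (and $v$). The quasi-periodicity factors under $u\mapsto u+2\cK$ and $u\mapsto u+2\iK$ come from~\eqref{Eta(u2K)} and~\eqref{Eta(uiK)}. We check that the quotient of the two sides is doubly periodic without poles, and since zeros match with multiplicity it is constant by Liouville. In~\eqref{eqn:snsn-Eta} the zeros are at $u=\pm v$ modulo the period lattice.

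The main obstacle is fixing these constants. Our plan is to evaluate at a convenient point. For~\eqref{eqn:snsn-Eta} we take $v = \cK$ or $u\to \iK$. For~\eqref{eqn:Eta(2u)} we take $u\to0$, using $\sn'(0)=1$, $\cn(0)=\dn(0)=1$ and the relations $\sqrt{k} = \Eta_1(0)/\Theta_1(0)$, $\sqrt{k'} = \Theta(0)/\Theta_1(0)$, together with Jacobi's identity $\theta_1'(0)=\theta_2(0)\theta_3(0)\theta_4(0)$.
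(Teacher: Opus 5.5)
The paper gives no proof of this lemma; it only collects the facts and refers to \cite[Ch.~1]{Lawden1989}. Your plan is essentially the derivation found there, and it is sound: the series for the elementary identities, the triple product for the zeros, positivity and the bounds on $\Theta$, and a Liouville argument for quasi-periodic entire functions for \eqref{eqn:Eta(2u)} and \eqref{eqn:snsn-Eta}. Like the paper, it takes two things for granted. One is the triple product. The other is that the quotients in \eqref{eqn:sncndn-Theta} are Jacobi's functions of modulus $k$, which is what supplies $\cn(0)=\dn(0)=1$ and $\sn'(0)=1$.

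There is one sign slip in the $\iK$-shift that you should fix. The substitution $v\mapsto v+\ii\pi\cK'/(2\cK)$ multiplies $\ee^{\ii v}$ by $\ee^{-\pi\cK'/(2\cK)}=q^{1/2}$, not by $q^{-1/2}$. Hence $u\mapsto u\pm\iK$ turns $q^{(n+1/2)^2}$ into $q^{(n+1/2)^2\pm(n+1/2)}$, not $q^{(n+1/2)^2\mp(n+1/2)}$. Taken literally, your version gives $\Eta(u+\iK)=-\ii q^{-1/4}\ee^{\ii v}\theta_4(v)$, which is \eqref{Eta(uiK)} with the signs interchanged. With the correct sign, the upper case needs the reindexing $m=n+1$ and yields $+\ii q^{-1/4}\ee^{-\ii v}\theta_4(v)$, as you claim.

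Two minor remarks on fixing the constants.
\begin{itemize}
\item In \eqref{eqn:snsn-Eta}, the Liouville constant a priori depends on $v$. Evaluating at $v=\cK$ alone therefore does not determine it, unless you also note that both sides are antisymmetric under $u\leftrightarrow v$. Evaluating at $u=0$, where both sides equal $-\sn^2(v)$, settles it at once.
\item In \eqref{eqn:Eta(2u)}, Jacobi's identity $\theta_1'(0)=\theta_2(0)\theta_3(0)\theta_4(0)$ is not needed. In the theta form, $\Eta'(0)$ cancels in the limit $u\to 0$, and you only need $\sqrt{k}=\Eta_1(0)/\Theta_1(0)$ and $\sqrt{k'}=\Theta(0)/\Theta_1(0)$. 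In the $\sn$ form, $\sn'(0)=1$ gives $\Eta'(0)=\sqrt{k}\,\Theta(0)$ directly, which is exactly what the limit requires.
\end{itemize}
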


Next, we introduce Jacobi's zeta function and collect some of its properties; see~\cite[Sect.~3.6]{Lawden1989}.

\begin{lemma} \label{lem:zeta}
Let $k \in \oo{0,1}$.
Jacobi's zeta function $\zn(u) \equiv \zn(u,k)$, defined by
\begin{equation}\label{zn}
\zn(u) \coloneq \frac{\Theta'(u)}{\Theta(u)},
\end{equation}
is an odd function with period $2\cK$, has zeros at $u = m \cK$ with $m \in \Z$, and satisfies $\zn(u) > 0$ for $u \in \oo{0, \cK}$.
It satisfies the identities
\begin{align}
\zn(u \pm v) &= \zn(u) \pm \zn(v) \mp k^2 \sn(u) \sn(v) \sn(u \pm v), \label{eqn:zn_add} \\
\zn(u \pm \cK) &= \zn(u) - \frac{k^2 \sn(u) \cn(u)}{\dn(u)}, \label{eqn:znK} \\
\zn(u+v) - \zn(u-v) 
&= 2 \zn(v) - 2 k^2 \sn^2(u) \frac{\sn(v) \cn(v) \dn(v)}{1 - k^2 \sn^2(u) 
\sn^2(v)}. \label{eqn:zn_add_difference}
\end{align}
\end{lemma}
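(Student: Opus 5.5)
The plan is to derive everything from the definition $\zn = \Theta'/\Theta$, the properties of $\Theta$ in Lemma~\ref{Lemma-ThetaEta}\,\ref{it:Theta}, and one addition theorem; the addition theorem is the main work.

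\emph{Elementary properties.}
\begin{itemize}
\item Since $\Theta$ is even and $2\cK$-periodic, $\Theta'$ is odd and $2\cK$-periodic. Hence $\zn$ is odd with period $2\cK$.
\item $\Theta$ has no real zeros, so $\zn$ is analytic on $\R$.
\item Oddness gives $\zn(0)=0$. Combining oddness with $2\cK$-periodicity gives $\zn(\cK) = -\zn(-\cK) = -\zn(\cK)$, so $\zn(\cK)=0$. Periodicity then gives the zeros at all $m\cK$.
\item For positivity on $\oo{0,\cK}$ I would use the standard identity $\zn'(u) = \dn^2(u) - \mathcal{E}/\cK$, where $\mathcal{E}$ is the complete integral of the second kind. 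This comes from $\frac{\dd^2}{\dd u^2}\log\Theta = \dn^2 - \mathcal{E}/\cK$, which I would cite from Lawden, Sect.~3.6, rather than reprove.
\item $\dn^2$ is strictly decreasing on $\oo{0,\cK}$ because $\sn$ increases there (Lemma~\ref{lem:sn-bijective}). So $\zn'$ has exactly one sign change on $\oo{0,\cK}$, from $+$ to $-$. With $\zn(0)=\zn(\cK)=0$, this shows $\zn$ first increases and then decreases, hence $\zn>0$ on $\oo{0,\cK}$.
\end{itemize}

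\emph{Addition formula \eqref{eqn:zn_add}: the main obstacle.} Fix $v$ with $\sn(v)$ finite and nonzero, and consider
\[
F(u) = \zn(u+v) - \zn(u) - \zn(v) + k^2 \sn(u)\sn(v)\sn(u+v).
\]
\begin{itemize}
\item \emph{Periodicity of $F$.} $\zn$ has period $2\cK$, and under $u\mapsto u+2\iK$ it picks up the constant $-\ii\pi/\cK$. This follows from the quasi-periodicity of $\Theta$, obtained via \eqref{Eta(uiK)} applied twice. The constant cancels in $\zn(u+v)-\zn(u)$. The term $\sn(u)\sn(u+v)$ is $(2\cK,2\iK)$-periodic, since $\sn(u+2\cK)=-\sn(u)$ and $\sn$ has period $2\iK$. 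So $F$ is elliptic with periods $2\cK, 2\iK$.
\item \emph{Pole cancellation.} The possible poles of $F$ are at $u \equiv \iK$ and $u \equiv \iK - v$, where $\Theta(u)$ or $\Theta(u+v)$ vanishes. At each one, $\zn$ has residue $1$. The $\sn$-product has residue $\mp 1$ there: use $\sn(w)\sim 1/(k(w-\iK))$ together with $\sn(\iK \pm v) = 1/(k\sn(\pm v))$ from \eqref{eqn:snK}.
\item \emph{Conclusion.} $F$ is therefore entire and elliptic, hence constant by Liouville. Since $F(0) = \zn(v)-0-\zn(v)+0 = 0$, we get $F\equiv 0$. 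The case with the minus sign follows by replacing $v$ by $-v$ and using oddness.
\end{itemize}
The residue bookkeeping is the delicate part; I would verify the local expansions carefully.

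\emph{Identity \eqref{eqn:znK}.} Set $v=\cK$ in \eqref{eqn:zn_add}, using $\zn(\cK)=0$, $\sn(\cK)=1$ and $\sn(u+\cK)=\cn(u)/\dn(u)$ from \eqref{eqn:snK}. The case $u-\cK$ equals $u+\cK$ by $2\cK$-periodicity.

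\emph{Identity \eqref{eqn:zn_add_difference}.} Subtract the two signs of \eqref{eqn:zn_add}. This gives
\[
\zn(u+v)-\zn(u-v) = 2\zn(v) - k^2\sn(u)\sn(v)\bigl(\sn(u+v)+\sn(u-v)\bigr).
\]
Then insert the classical consequence of the addition theorem for $\sn$,
\[
\sn(u+v)+\sn(u-v) = \frac{2\sn(u)\cn(v)\dn(v)}{1-k^2\sn^2(u)\sn^2(v)},
\]
cited from Byrd--Friedman 123.01. This yields \eqref{eqn:zn_add_difference} directly.
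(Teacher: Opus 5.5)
Your proof is correct, but it does something the paper does not do. The paper states Lemma~\ref{lem:zeta} without proof and simply refers to Lawden, Sect.~3.6, for all the listed properties. You instead derive everything from the facts on $\Theta$ already collected in Lemma~\ref{Lemma-ThetaEta}:
\begin{itemize}
\item oddness and $2\cK$-periodicity of $\Theta$ give the symmetry of $\zn$ and its zeros at $m\cK$;
\item the quasi-periodicity $\zn(u+2\iK)=\zn(u)-\ii\pi/\cK$ makes your $F$ elliptic; it does follow from \eqref{Eta(uiK)} used with both signs;
\item the zeros of $\Theta$ and the poles of $\sn$ both lie at $\iK+2m\cK+2\ii m'\cK'$, so after the residue cancellation $F$ is entire, hence constant;
\item \eqref{eqn:znK} and \eqref{eqn:zn_add_difference} are then one-line consequences.
\end{itemize}
The citation buys brevity. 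Your route buys a self-contained check in the paper's own notation. As written, its only outside inputs are the $\sn$ addition theorem and the formula for $\zn'$.

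Two small remarks. First, in the residue step the signs are the other way round from how you list them. At $u\equiv\iK$, the term $-\zn(u)$ contributes $-1$ and $k^2\sn(u)\sn(v)\sn(u+v)$ contributes $k^2\sn(v)\cdot\frac1k\cdot\frac{1}{k\sn(v)}=+1$. At $u\equiv\iK-v$, the term $\zn(u+v)$ contributes $+1$ and the product contributes $k^2\sn(v)\cdot\frac{-1}{k\sn(v)}\cdot\frac1k=-1$. The cancellation you claim is therefore right.

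Second, you can dispense with the cited identity $\zn'=\dn^2-\mathcal{E}/\cK$. Your proof of \eqref{eqn:zn_add} uses only periodicity, the pole structure and $\zn(0)=0$, not positivity, so you may prove it first. Differentiating it in $v$ at $v=0$ gives $\zn'(u)=\zn'(0)-k^2\sn^2(u)$, hence $\zn''=-2k^2\sn\cn\dn<0$ on $\oo{0,\cK}$. So $\zn$ is strictly concave on $\cc{0,\cK}$ with $\zn(0)=\zn(\cK)=0$, and therefore positive on $\oo{0,\cK}$, without needing the value of the constant.
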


In Lemma~\ref{lem:zeta}, we follow the notation of Carlson and 
Todd~\cite{CarlsonTodd1983_elliptic} for Jacobi's zeta function, in other 
references, like~\cite[Sect.~3.6]{Lawden1989} and~\cite[p.~33]{ByrdFriedman1971}, Jacobi's zeta function is 
denoted by a Greek capital zeta $\text{Z}$.


In the next theorem, we consider an auxiliary function appearing in the 
elliptic representation of the complex Green's function in 
Theorem~\ref{thm:GE_two_intervals}.

\begin{theorem} \label{thm:Eta_quotient}
Fix $k \in \oo{0, 1}$, $v \in \oo{0, \cK}$, and consider the function
\begin{equation}
f(u) \coloneq \frac{\Eta(v+u)}{\Eta(v-u)}.
\end{equation}
\begin{enumerate}
\item \label{it:dlogf}
The logarithmic derivative of $f$ is
\begin{equation} \label{eqn:dlogf}
\frac{f'(u)}{f(u)}
= \frac{\dd}{\dd u} \log \left( \frac{\Eta(v + u)}{\Eta(v - u)} \right)
= 2 \zn(v) + \frac{2 \sn(v) \cn(v) \dn(v)}{\sn^2(v) - \sn^2(u)}.
\end{equation}

\item \label{it:0_iKp_K_KiKp}
The function $f$ satisfies $\abs{f(u)} = 1$ for $u \in \cc{0, \ii \cK'} 
\cup \cc{\cK, \cK + \ii \cK'}$.

\item \label{it:f_on_boundary}
The function $f$ has the following mapping properties.
\begin{itemize}
\item $f : \co{0, v} \to \co{1, \infty}$ and $f : \oc{v, \cK} \to \oc{-\infty, 
-1}$ are strictly increasing and bijective.

\item $f$ maps $\cc{\cK, \cK + \ii \cK'}$ bijectively onto the arc 
of the upper unit circle from $f(\cK) = -1$ to $f(\cK + \ii \cK') = \ee^{\ii 
\pi (1 - v/\cK)}$.

\item $f$ maps $\cc{0, \ii \cK'}$ bijectively onto the arc of the upper unit 
circle from $f(0) = 1$ to $f(\ii \cK') = \ee^{\ii \pi (1 - v/\cK)}$.

\item There exists $r > 1$ such that $f :  \cc{\ii \cK', \cK + \ii \cK'} \to 
\cc{\ee^{\ii \pi (1 - v/\cK)}, r \ee^{\ii \pi (1 - v/\cK)}}$ is $2$-to-$1$.
\end{itemize}

\item \label{it:image_of_rectangle}
With $r > 1$ from~\ref{it:f_on_boundary} and $P^\pm$ from~\eqref{eqn:varphi_Pplus}, the functions
\begin{align}
&f : P^+
\to \bH^+ \setminus 
(\overline{\bD} \cup \cc{ \ee^{+ \ii \pi (1-v/\cK)}, r \ee^{+ \ii \pi 
(1-v/\cK)}}), \label{eqn:image_P+} \\
&f : P^-
\to \bH^- \setminus 
(\overline{\bD} \cup \cc{\ee^{- \ii \pi (1 - v/\cK)}, r \ee^{- \ii \pi (1 - 
v/\cK)}} ), \label{eqn:image_P-} \\
&f : -P^- 
\to (\bH^+ \cap \bD) 
\setminus \cc{r^{-1} \ee^{+ \ii \pi (1 - v/\cK)}, \ee^{+ \ii \pi (1 - v/\cK)}},
\\
&f : 
- P^+
\to (\bH^- \cap \bD) 
\setminus \cc{r^{-1} \ee^{- \ii \pi (1 - v/\cK)}, \ee^{- \ii \pi (1 - v/\cK)}}
\label{eqn:image_-P+}
\end{align}
are conformal and bijective, where $\bD = \{ z \in \C : \abs{z} < 1 \}$.

\item \label{it:f_on_domain}
For $u \in P^+ \cup P^- \cup \oo{0, v}$, the function $\log(f(u))$ with the 
principal branch of the logarithm is defined, is real on $\oo{0, v}$ and 
satisfies $\log(f(0)) = 0$.
\end{enumerate}
\end{theorem}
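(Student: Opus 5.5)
The plan is to prove the five parts in order. Parts~\ref{it:dlogf}--\ref{it:f_on_boundary} are computations with the identities of Lemma~\ref{Lemma-ThetaEta} and Lemma~\ref{lem:zeta}. Part~\ref{it:image_of_rectangle} is then a boundary-correspondence argument, and part~\ref{it:f_on_domain} is a corollary of it.

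\emph{Part~\ref{it:dlogf}.} Take logarithms in \eqref{eqn:snsn-Eta}, with the roles of $u$ and $v$ swapped:
\[
\log\bigl(\sn^2(v)-\sn^2(u)\bigr) = \log\Eta(v-u)+\log\Eta(v+u) - 2\log\Theta(u) - 2\log\Theta(v) + \mathrm{const}.
\]
Differentiate with respect to $v$ and use $\frac{\dd}{\dd v}\sn^2(v) = 2\sn(v)\cn(v)\dn(v)$ and \eqref{zn}. This gives
\[
\frac{\Eta'(v+u)}{\Eta(v+u)}+\frac{\Eta'(v-u)}{\Eta(v-u)} = 2\zn(v)+\frac{2\sn(v)\cn(v)\dn(v)}{\sn^2(v)-\sn^2(u)} .
\]
The left-hand side is exactly $f'(u)/f(u)$ (derivative in $u$), which is \eqref{eqn:dlogf}.

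\emph{Part~\ref{it:0_iKp_K_KiKp}.} For $u=\ii t$, \eqref{Eta(ovu)} gives $\Eta(v-\ii t)=\conj{\Eta(v+\ii t)}$, so $\abs{f}=1$. For $u=\cK+\ii t$, \eqref{Eta(uK)} gives $f(u)=-\Eta_1(v+\ii t)/\Eta_1(v-\ii t)$. Since $\Eta_1$ is real on $\R$, the same conjugation argument gives $\abs{f}=1$.

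\emph{Part~\ref{it:f_on_boundary}, real segment.} On $\co{0,v}$ we have $f>0$, $f(0)=1$ and a simple pole at $v$. By \eqref{eqn:dlogf}, $f'/f>0$, since $\zn(v)>0$, $\sn\cn\dn(v)>0$ and $\sn^2(u)<\sn^2(v)$. On $\oc{v,\cK}$ we have $f<0$, because $\Eta(v-u)<0$ there. Also $f(\cK)=\Eta_1(v)/(-\Eta_1(v))=-1$. To show $f$ is increasing we need $f'/f<0$. The second term of \eqref{eqn:dlogf} increases in $\sn^2(u)\in\oc{\sn^2(v),1}$, so it suffices to check the value at $u=\cK$, namely $\zn(v)\cn^2(v)\le \sn(v)\cn(v)\dn(v)$. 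By \eqref{eqn:znK}, $\zn(v)-k^2\sn(v)\cn(v)/\dn(v)=\zn(v-\cK)<0$. Hence it is enough that $k^2\cn^2\le\dn^2$, which holds by \eqref{eqn:sncndn}.

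\emph{Part~\ref{it:f_on_boundary}, vertical sides.} On both vertical sides, $\sn^2(u)$ is real: on $\cc{0,\ii\cK'}$ it lies in $\oc{-\infty,0}$ by \eqref{eqn:sncndniu}, and on $\cc{\cK,\cK+\ii\cK'}$ it lies in $\cc{1,1/k^2}$ by \eqref{eqn:snK}. Hence $\frac{\dd}{\dd t}\log f(u)=\ii f'/f$ is purely imaginary, and the argument of $f$ is monotone in $t$.
\begin{itemize}
\item On $\cc{0,\ii\cK'}$ we have $f'/f>0$, so the argument increases from $0$.
\item On $\cc{\cK,\cK+\ii\cK'}$, set $s=\sn^2(u)\in\cc{1,1/k^2}$. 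The second term of \eqref{eqn:dlogf} is increasing in $s$. At $s=1/k^2$, using \eqref{eqn:sncndn} and \eqref{eqn:znK}, the value equals $2\zn(v)-2k^2\sn(v)\cn(v)/\dn(v)=2\zn(v-\cK)<0$. So $f'/f<0$ throughout, and the argument decreases from $\pi$.
\end{itemize}
The endpoint value comes from \eqref{Eta(uiK)}: for $s\in\cc{0,\cK}$,
\[
f(s+\ii\cK') = -\ee^{-\ii\pi v/\cK}\,\frac{\Theta(v+s)}{\Theta(v-s)} .
\]
At $s=0$ and $s=\cK$ this gives $f(\ii\cK')=f(\cK+\ii\cK')=\ee^{\ii\pi(1-v/\cK)}$. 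So both vertical sides are mapped bijectively onto the two stated arcs.

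\emph{Part~\ref{it:f_on_boundary}, top side.} The same formula shows that the top side is mapped into the ray of argument $\pi(1-v/\cK)$, with modulus $\Theta(v+s)/\Theta(v-s)$. This modulus equals $1$ at $s=0$ and at $s=\cK$, by evenness and $2\cK$-periodicity of $\Theta$. Its logarithmic derivative is $\zn(v+s)-\zn(v-s)$. By \eqref{eqn:zn_add_difference} this equals $2\zn(v)-2k^2\sn^2(s)\,c/(1-k^2\sn^2(s)\sn^2(v))$ with $c=\sn\cn\dn(v)>0$. The subtracted term is increasing in $s$, so the log-derivative changes sign exactly once. Hence the modulus rises to a maximum $r>1$ and falls back to $1$, which is the $2$-to-$1$ claim.

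\emph{Part~\ref{it:image_of_rectangle}.} This is the step I expect to be the main obstacle. Take the boundary of $P^+$ and indent it by a small semicircle around the pole $v$. By \eqref{eqn:dlogf}, $f$ behaves like $-c/(u-v)$ near $v$, so the image of the indentation is a large near-semicircle in $\bH^+$. By part~\ref{it:f_on_boundary}, the indented boundary is then mapped onto a closed curve that runs once around the boundary of $\bH^+\setminus(\overline{\bD}\cup\text{slit})$: along $\co{1,R}$, the large arc, $\oc{-R,-1}$, the unit arc, out along the slit, back along the slit, and the other unit arc back to $1$. The argument principle then shows that each point of the target region has exactly one preimage in $P^+$ and no point outside it has one. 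Some care is needed with the doubly traversed slit and with the limit $R\to\infty$.
\begin{itemize}
\item The statement for $P^-$ follows from $f(\conj u)=\conj{f(u)}$, which holds by \eqref{Eta(ovu)}.
\item The statements for $-P^\pm$ follow from $f(-u)=1/f(u)$.
\end{itemize}

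\emph{Part~\ref{it:f_on_domain}.} By part~\ref{it:image_of_rectangle}, $f(P^+\cup P^-)\subseteq\bH^+\cup\bH^-$. By part~\ref{it:f_on_boundary}, $f(\oo{0,v})\subseteq\oo{1,\infty}$. So $f$ never takes values in $\oc{-\infty,0}$ on $P^+\cup P^-\cup\oo{0,v}$. Therefore the principal logarithm of $f$ is defined there, it is real on $\oo{0,v}$, and $\log f(0)=\log 1=0$.
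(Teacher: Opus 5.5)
Your proposal is correct and follows the paper's proof part by part. Specifically, you use:
\begin{itemize}
\item the same conjugation argument in (ii);
\item the same sign analysis of \eqref{eqn:dlogf} on the bottom and vertical sides, including the same use of \eqref{eqn:znK} and $\dn^2-k^2\cn^2={k'}^2$;
\item the same formula from \eqref{Eta(uiK)} on the top side;
\item a boundary-correspondence argument in (iv);
\item the symmetries $f(\conj{u})=\conj{f(u)}$ and $f(-u)=1/f(u)$.
\end{itemize}
The genuinely different step is (i). The paper cites it from earlier work of Schiefermayr, whereas you derive it by differentiating the logarithm of \eqref{eqn:snsn-Eta}, with $u$ and $v$ interchanged, with respect to $v$. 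This makes the theorem self-contained. On the top side, your argument (a strictly decreasing logarithmic derivative plus equal endpoint values) also avoids solving explicitly for the critical point, as the paper does.

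Two small points.

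First, the logarithmic derivative of $\Theta(v+s)/\Theta(v-s)$ is $\zn(v+s)+\zn(v-s)=\zn(s+v)-\zn(s-v)$, not $\zn(v+s)-\zn(v-s)$. The expression you then take from \eqref{eqn:zn_add_difference}, with first argument $s$, is the correct one, so nothing downstream changes.

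Second, strict monotonicity of $\arg f$ on a vertical side, together with its endpoint values, fixes the total change of argument only modulo $2\pi$. Bijectivity onto the stated arcs requires that change to be smaller than $2\pi$. The paper leaves this implicit too, but your indentation argument in (iv) supplies it:
\begin{itemize}
\item $f$ has no zeros or poles in the indented rectangle;
\item the two real segments and the top side contribute no change of argument;
\item the small semicircle around $v$ contributes $+\pi$.
\end{itemize}
Hence the two positive decreases of $\arg f$ along the vertical sides, in the counterclockwise traversal, sum to exactly $\pi$. They therefore equal $\pi(1-v/\cK)$ and $\pi v/\cK$. It is cleanest to carry out this count for the value $0$ before you describe the image curve in (iv).
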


Theorem~\ref{thm:Eta_quotient}\,\ref{it:f_on_boundary} 
and~\ref{it:image_of_rectangle} are illustrated in 
Figure~\ref{fig:Eta_quotient}.

\begin{figure}
{\centering
\begin{tikzpicture}[scale=1.1,
declare function = {
r = 1.4;
myangle = 65;
v = (1 - myangle/180)*1.5;  
}]


\fill[blue!40] (0,0) rectangle (1.5,2);
\fill[blue!20] (0,0) rectangle (1.5,-2);
\fill[blue!10] (-1.5,0) rectangle (0,2);
\fill[blue!30] (-1.5,0) rectangle (0,-2);

\draw[->] (-2,0) -- (2,0) node[right] {$\re$};
\draw[->] (0,-2.4) -- (0,2.4) node[above] {$\im$};

\draw[thick] (-1.5,-2) rectangle (1.5,2);

\draw[thick] (v,+0.06) -- (v,-0.06) node[below] {$v$};

\draw[ultra thick, mid arrow, green!60!black] (v,0) -- (1.5,0);
\draw[ultra thick, mid arrow, teal!60!black] (1.5,0) -- (1.5,2);
\draw[ultra thick, mid arrow, cyan!60!black] (1.5,2) --(0,2);
\draw[ultra thick, mid arrow, blue!60!black] (0,2) -- (0,0);
\draw[ultra thick, mid arrow, violet!80!black] (0,0) -- (v,0);

\node[below left] at (0,0) {0};
\node[below right] at (1.5,0) {$\cK$};
\node[below left] at (-1.5,0) {$-\cK$};
\node[above left] at (0,2) {$\ii \cK'$};
\node[below left] at (0,-2) {$-\ii \cK'$};
\node[] at (0.75, 1) {$P^+$};
\node[] at (0.75, -1) {$P^-$};

\draw[->, thick] (2.4,0.5) -- (3.2,0.5) node[midway, above] {$f$};


\begin{scope}[shift={(6.5,0)}]

\fill[blue!40] (-3,0) rectangle (3,2.4);
\fill[blue!20] (-3,0) rectangle (3,-2.4);
\begin{scope}
\clip (0,0) circle (1);
\fill[blue!10] (-1,0) rectangle (1,1);
\end{scope}
\begin{scope}
\clip (0,0) circle (1);
\fill[blue!30] (-1,-1) rectangle (1, 0);
\end{scope}

\draw[->] (-3,0) -- (3,0) node[right] {$\re$};
\draw[->] (0,-2.4) -- (0,2.4) node[above] {$\im$};

\node[below left] at (-1,0) {$-1$};
\node[below right] at (1,0) {$1$};
\node[below right] at (0,0) {0};

\draw[thick] (0,0) circle (1);

\draw[ultra thick, black] ({1/r * cos(myangle)},{1/r * sin(myangle)}) -- 
({r*cos(myangle)},{r*sin(myangle)}) node[above right] {$r \ee^{\ii \pi(1 - 
v/\cK)}$};

\draw[ultra thick, black] ({1/r * cos(myangle)},{1/r * sin(-myangle)}) -- 
({r*cos(-myangle)},{r*sin(-myangle)});

\draw[ultra thick, mid arrow, green!60!black] (-3,0) -- (-1,0);
\draw[ultra thick, mid arrow, teal!60!black] (-1,0) arc[start angle=180, end 
angle=65, radius=1cm];
\draw[ultra thick, cyan!60!black] ({cos(myangle)},{sin(myangle)}) -- 
({r*cos(myangle)},{r*sin(myangle)}) node[above right] {\color{black} $r 
\ee^{\ii \pi(1 - 
v/\cK)}$};
\draw[ultra thick, mid arrow reversed, blue!60!black] (1,0) arc[start angle=0, 
end angle=65, radius=1cm];
\draw[ultra thick, mid arrow, violet!80!black] (1,0) -- (3,0);
\end{scope}

\end{tikzpicture}

}
\caption{Illustration of the map $f$ in Theorem~\ref{thm:Eta_quotient}.}
\label{fig:Eta_quotient}
\end{figure}

\begin{proof}
\ref{it:dlogf} has been obtained in~\cite[Lem.~7.3]{Schiefermayr2015_Zolotarev}.

\ref{it:0_iKp_K_KiKp}
We write $u \in \cc{0, \ii \cK'}$ as $u = \ii t$, $t \in \cc{0, \cK'}$.  Then, 
by~\eqref{Eta(ovu)},
\begin{equation*}
f(u) = \frac{\Eta(v + \ii t)}{\Eta(v - \ii t)}
= \Eta(v + \ii t) \Big/ \conj{\Eta(v + \ii t)},
\end{equation*}
showing $\abs{f(u)}  = 1$.  Similarly, $u \in \cc{\cK, \cK + \ii \cK'}$ has the 
form $u = \cK + \ii t$ with $t \in \cc{0, \cK'}$.  By~\eqref{Eta(u2K)} 
and~\eqref{Eta(ovu)},
\begin{equation*}
f(u) = \frac{\Eta(v + \cK + \ii t)}{\Eta(v - \cK - \ii t)}
= - \frac{\Eta(v + \cK + \ii t)}{\Eta(v + \cK - \ii t)}
= - \Eta(v + \cK + \ii t) \Big/ \conj{\Eta(v + \cK + \ii t)},
\end{equation*}
hence $\abs{f(u)} = 1$.

\ref{it:f_on_boundary}
We consider the four intervals $\cc{0, \cK}$, $\cc{0, \ii \cK'}$, $\cc{\cK, \cK + 
\ii \cK'}$ and $\cc{\ii \cK', \cK + \ii \cK'}$ separately.

\emph{Interval $\cc{0, \cK}$.} If $u \in \oo{0, \cK}$ then $v + u \in \oo{0, 2 \cK}$ and 
$\Eta(v+u) > 0$.  At $u = v$, $f$ has a simple pole.
For $u \in \oo{0, v}$, $v-u \in \oo{0, \cK}$, hence $\Eta(v-u) > 0$ and 
$f(u) > 0$.
For $u \in \oo{v, \cK}$, $v-u \in \oo{-\cK, 0}$, hence $\Eta(v-u) < 0$ and 
$f(u) < 0$.

Next, we show that $f'(u) > 0$ on $\oo{0, v} \cup \oo{v, \cK}$.
By~\ref{it:dlogf},
\begin{equation*}
f'(u) = 2 f(u) \left( \zn(v) + \frac{\sn(v) \cn(v) \dn(v)}{\sn(v)^2 - \sn(u)^2} 
\right).
\end{equation*}
We investigate the sign of the term in parentheses.

For $u \in \oo{0, v}$, we have that $0 < \sn(u) < \sn(v)$, hence the term in 
parentheses is positive, and $f'(u) > 0$.
Since $f(0) = 1$ and $\lim_{u \to v, u < v} f(u) = + \infty$, this shows that $f 
: \co{0, v} \to \co{1, \infty}$ is strictly increasing and bijective.

For $u \in \oo{v, \cK}$, we have that $\sn(v) < \sn(u) < 1$.
Then,
\begin{equation*}
\zn(v) + \frac{\sn(v) \cn(v) \dn(v)}{\sn(v)^2 - \sn(u)^2}
< \zn(v) + \frac{\sn(v) \cn(v) \dn(v)}{\sn(v)^2 - 1}
= \zn(v) - \frac{\sn(v) \dn(v)}{\cn(v)}.
\end{equation*}
Using~\eqref{eqn:sncndn} and Lemma~\ref{lem:zeta}, we get
\begin{align*}
&\zn(v) - \frac{\sn(v) \dn(v)}{\cn(v)}
= \zn(v-\cK) + k^2 \frac{\sn(v) \cn(v)}{\dn(v)} - \frac{\sn(v) \dn(v)}{\cn(v)} 
\\
&= \zn(v-\cK) + \sn(v) \frac{k^2 \cn(v)^2 - \dn(v)^2}{\cn(v) \dn(v)}
= - \zn(\cK-v) - \frac{{k'}^2 \sn(v)}{\cn(v) \dn(v)} < 0.
\end{align*}
Thus, $f'(u) > 0$.
Since $\lim_{u \searrow v} f(u) = - \infty$ and
$f(\cK) = \frac{\Eta(v+\cK)}{\Eta(v-\cK)} = - \frac{\Eta(v+\cK)}{\Eta(v+\cK)} = 
-1$, this shows that $f : \oc{v, \cK} \to \oc{-\infty, -1}$ is strictly increasing and bijective.

\emph{Interval $\cc{0, \ii \cK'}$.}  Let $u \in \cc{0, \ii \cK'}$.  By~\ref{it:0_iKp_K_KiKp}, 
$\abs{f(u)} = 1$, which implies $\log(f(u)) = \ii \arg(f(u))$.
By~\eqref{eqn:dlogf}, for $t \in \oo{0, \cK'}$,
\begin{equation*}
\frac{\dd}{\dd t} \arg(f(\ii t))
= \frac{1}{\ii} \frac{\dd}{\dd t} \log(f(\ii t)) 
= 2 \zn(v) + \frac{2 \sn(v) \cn(v) \dn(v)}{\sn^2(v) - \sn^2(\ii t)}.
\end{equation*}
By~\eqref{eqn:sncndniu},
$\sn^2(\ii t, k) = - \sn^2(t, k') / \cn^2(t, k') < 0$.
Moreover, $\sn(v), \cn(v), \dn(v)$, $\zn(v) > 0$ for $v \in \oo{0, \cK}$, hence
$\frac{\dd}{\dd t} \arg(f(\ii t)) > 0$, $t \in \oo{0, \cK'}$.
Therefore, $t \mapsto \arg(f(\ii t))$ is strictly increasing on $\cc{0, \cK'}$.
Finally, $f(0) = 1$ since $\Eta(v) \neq 0$ for $v \in \oo{0, \cK}$, and, 
by~\eqref{Eta(uiK)}, $f(\ii \cK') = \ee^{\ii \pi (1 - v/\cK)}$.

\emph{Interval $\cc{\cK, \cK + \ii \cK'}$.}
Let $u \in \cc{\cK, \cK + \ii \cK'}$.  By~\ref{it:0_iKp_K_KiKp}, 
$\abs{f(u)} = 1$, which implies $\log(f(u)) = \ii \arg(f(u))$.
Next, we show that $\arg(f) : \cc{\cK, \cK + \ii \cK'} \to \R$ is strictly 
decreasing.  Write $u = \cK + \ii t$ with $t \in \cc{0, \cK'}$.
By~\eqref{eqn:dlogf},  we get
\begin{equation*}
\frac{\dd}{\dd t} \arg(f(\cK + \ii t))
= \frac{1}{\ii} \frac{\dd}{\dd t} \log(f(\cK + \ii t)) 
= 2 \zn(v) + \frac{2 \sn(v) \cn(v) \dn(v)}{\sn^2(v) - \sn^2(\cK + \ii t)}.
\end{equation*}
By~\eqref{eqn:snK} and~\eqref{eqn:sncndniu}, we have
$\sn(\cK + \ii t, k) = 1/\dn(t, k')$.
Since $k \leq \dn(t, k') \leq 1$, we obtain that
$\sn^2(v) - k^{-2} \leq \sn^2(v) - \sn^2(\cK + \ii t) \leq \sn^2(v) - 1 < 0$, 
and thus, by~\eqref{eqn:sncndn}, \eqref{eqn:znK} and Lemma~\ref{lem:zeta},
\begin{align*}
\frac{\dd}{\dd t} \arg(f(\cK + \ii t))
&\leq 2 \left( \zn(v) + \frac{\sn(v) \cn(v) \dn(v)}{\sn^2(v) - k^{-2}} \right) 
= 2 \left( \zn(v) + k^2 \frac{\sn(v) \cn(v) \dn(v)}{k^2 \sn^2(v) - 1} \right) 
\\
&= 2 \left( \zn(v) - k^2 \frac{\sn(v) \cn(v) }{\dn(v)} \right)
= 2 \zn(v + \cK) < 0.
\end{align*}
Finally, $f(\cK) = \ee^{\ii \pi}$ is established with~\eqref{Eta(uK)}, and
$f(\cK + \ii \cK') = \ee^{\ii \pi ( 1 - v/\cK)}$ with~\eqref{Eta(uiK)}
and the $2 \cK$-periodicity of $\Theta$, see Lemma~\ref{Lemma-ThetaEta}\,(ii).

\emph{Interval $\cc{\ii \cK', \cK + \ii \cK'}$.}
Let $u \in \cc{\ii \cK', \cK + \ii \cK'}$, then $u = t + \ii \cK'$ with $t \in 
\cc{0, \cK}$.  By~\eqref{Eta(uiK)},
\begin{equation*}
f(t + \ii \cK')
= \frac{\Eta(v + t + \ii \cK')}{\Eta(v - t - \ii \cK')}
= - \ee^{- \ii \pi v/\cK} \frac{\Theta(v + t)}{\Theta(v - t)}
= \ee^{\ii \pi (1 - v/\cK)} \frac{\Theta(v + t)}{\Theta(v - t)}.
\end{equation*}
Since the theta function has real values, see Lemma~\ref{Lemma-ThetaEta}\,\ref{it:Theta}, $f(t + \ii \cK')$ has fixed argument 
and variable modulus.
We investigate the quotient of theta functions.  Let
\begin{equation*}
g : \cc{0, \cK} \to \oo{0, \infty}, \quad
g(t) \coloneq \frac{\Theta(v + t)}{\Theta(v - t)}.
\end{equation*}
Then, $g(0) = 1$, and $g(\cK) = 1$, since $\Theta$ is $2 \cK$-periodic.
For $t \in \cc{0, \cK}$,
\begin{equation*}
g'(t)
= \frac{\Theta(t+v)}{\Theta(t-v)} \left( \frac{\Theta'(t+v)}{\Theta(t+v)} - 
\frac{\Theta'(t-v)}{\Theta(t-v)} \right)
= g(t) (\zn(t+v) - \zn(t-v)).
\end{equation*}
The first factor $g(t)$ is positive.
By~\eqref{eqn:zn_add_difference},
\begin{align*}
g'(t) = 0
&\Leftrightarrow
\zn(t+v) - \zn(t-v) = 0 \\
& \Leftrightarrow
\sn(t) = + \sqrt{\frac{\zn(v)}{k^2 \sn(v) (\cn(v) \dn(v) + \sn(v) \zn(v))}},
\end{align*}
since $\sn(t) \geq 0$ for $t \in \cc{0, \cK}$.  Additionally, $\sn$ is strictly 
increasing on $\cc{0, \cK}$, hence $g$ has a unique critical point $t_* \in 
\oo{0, \cK}$.
Together with $g'(0) = 2 \zn(v) > 0$ and,
since $\zn$ is odd and $2 \cK$-periodic,
\begin{equation*}
g'(\cK) = \zn(\cK + v) - \zn(\cK - v) = 2 \zn(v+\cK) < 0,
\end{equation*}
we find that
$g$ is strictly increasing on $\cc{0, t_*}$ and strictly decreasing on 
$\cc{t_*, \cK}$.
Then, with $r \coloneq g(t_*) > g(0) = 1$, we find that
$f :  \cc{\ii \cK', \cK + \ii \cK'} \to 
\cc{\ee^{\ii \pi (1 - v/\cK)}, r \ee^{\ii \pi (1 - v/\cK)}}$ is $2$-to-$1$.

\ref{it:image_of_rectangle}
By~\ref{it:f_on_boundary}, we have that $f : \partial P^+ \to \partial ( 
\bH^+ \setminus ( \overline{\bD} \cup \cc{\ee^{\ii \pi (1 - v/\cK)}, r 
\ee^{\ii \pi (1 - v/\cK)} } )$.  By distinguishing the two sides of the 
spike, this map is bijective on the boundary and hence conformal and bijective 
from $P^+$ to $\bH^+ \setminus (\overline{\bD} \cup \cc{\ee^{\ii \pi (1 - 
v/\cK)}, r \ee^{\ii \pi (1 - v/\cK)} })$.
The assertions for the maps~\eqref{eqn:image_P-}--\eqref{eqn:image_-P+} follow 
from~\eqref{eqn:image_P+} and the Schwarz reflection principle, or, in a more 
elementary way, with $f(u) = \conj{f(\conj{u})}$ and $f(-u) = 1/f(u)$.

\ref{it:f_on_domain}
That $f : P^+ \cup P^- \cup \oo{0, v} \to \C \setminus \oc{-\infty, 0}$ 
follows from~\ref{it:image_of_rectangle} and~\ref{it:f_on_boundary}.
Thus, the principal branch of the logarithm is well-defined on the image of $f$.
Since $f(\co{0,v}) = \co{1, \infty}$, $\log(f(u))$ is real for $u \in \oo{0,v}$ 
with $\log(f(0)) = \log(1) = 0$.
\end{proof}

\appendix
\section{Appendix}

\begin{proof}[Proof of Lemma~\ref{lem:varphi}]
The properties of $\varphi$ follow from the representation
$\varphi(u) = T(\sn^2(u))$ with the M\"obius transformation
\begin{equation*}
T(w) \coloneq \frac{b_2 w - b_1 \sn^2(\varrho)}{w - \sn^2(\varrho)},
\end{equation*}
and from the known properties of~$\sn$.
The function $\sn$ is an odd elliptic function of order~$2$ with fundamental periods $\{ 4 \cK$, $2 \ii \cK' \}$; see~\cite[Sect.~6.24]{AndersonVamanamurthyVuorinen1997}. Together with $\sn(u + 2 \cK) = - \sn(u)$, we see that $\sn^2$ is an even elliptic function of order~$2$ with fundamental periods $\{ 2 \cK, 2 \ii \cK' \}$ (note that the real period is halved).  Hence, also $\varphi(u) = T(\sn^2(u))$ is an even elliptic function of order~$2$ with fundamental periods $\{ 2 \cK, 2 \ii \cK' \}$.
In particular, $\varphi$ has exactly two poles (counting multiplicity) in a 
period parallelogram.  Since $\pm \varrho$ are distinct poles of $\varphi$, 
these are simple.

Next, we give a constructive proof of the mapping properties of $\varphi$, illustrated in Figure~\ref{fig:varphi}.
By~\cite[Sect.~6.24]{AndersonVamanamurthyVuorinen1997}, 
$\sn : P^+ \to \{ z \in \C : \re(z) > 0, \im(z) > 0 \}$ is conformal and bijective with the boundary correspondences
\begin{equation*}
\begin{aligned}
&\sn : \co{0, \cK} \to \co{0, 1}, && \sn : \oc{\cK, \cK + \ii \cK'} \to \oc{1, 1/k}, \\
&\sn : \oo{\cK + \ii \cK', \ii \cK'} \to \oo{1/k, \infty}, &&
\sn : \oo{0, \ii \cK'} \to \{ \ii y : y \in \oo{0, \infty} \}.
\end{aligned}
\end{equation*}
Since $\sn(u) = \conj{\sn(\conj{u})}$, $u \in \C$, we get that
$\sn : R \coloneq P^+ \cup P^- \cup \oo{0, \cK}
\to \{ z \in \C : \re(z) > 0 \} \setminus \co{1, \infty}$ and thus
$\sn^2 : R \to \C \setminus ( \oc{-\infty, 0} \cup \co{1, \infty} )$ are conformal and bijective.
Since
\begin{equation*}
T(0) = b_1, \quad
T(\sn^2(\varrho)) = \infty, \quad
T(1) = b_4, \quad
T(1/k^2) = b_3, \quad
T(\infty) = b_2,
\end{equation*}
the M\"obius transformation $T$ maps the real line onto the real line but reverses the orientation.  In particular, $\varphi : P^\pm \to \bH^\mp$, and the boundary correspondences~\eqref{eqn:phi_boundary_correspondence} hold.
Finally, also~\eqref{eqn:phi_onto_E_complement} is conformal and bijective.
The formula for $\varphi^{-1}$ follows with $T^{-1}(z) = \sn^2(\varrho) \frac{z-b_1}{z-b_2}$ and by reversing the three steps of the construction.
\end{proof}

\begin{proof}[Proof of Theorem~\ref{thm:Green_function_2_intervals}]
\ref{it:green_function}, \ref{it:logarithmic_capacity}
We show that $g_E$ defined in~\eqref{eqn:gE} is indeed Green's function of $\comp{E}$.
First, $g_E$ is harmonic in $\C \setminus E$ as real part of an analytic function.
Next, we show that $g_E$ has boundary values zero.
The points $z = \varphi(u) \in E = \cc{b_1, b_2} \cup \cc{b_3, b_4}$ correspond to $u \in \cc{0, \ii \cK'} \cup \cc{\cK, \cK + \ii \cK'}$.
Thus, by Theorem~\ref{thm:Eta_quotient}\,\ref{it:0_iKp_K_KiKp},
$g_E(z) = \log \abs{ \frac{\Eta(\varrho + u)}{\Eta(\varrho - u)} } = \log(1) = 0$ for $z \in E$.
Finally, we show that the limit
\begin{equation*}
\lim_{z \to \infty} (\log \abs{z} - g_E(z))
= \lim_{u \to \varrho} \left( \log \abs{\varphi(u)} - \log \abs*{ \frac{\Eta(\varrho + u)}{\Eta(\varrho - u)} } \right)
\end{equation*}
exists.  Indeed,
\begin{align*}
\lim_{u\to\varrho} \varphi(u) \frac{\Eta(\varrho-u)}{\Eta(\varrho+u)}
&=\lim_{u\to\varrho} \frac{b_2 \sn^2(u) - b_1 \sn^2(\varrho)}{\Eta(\varrho+u)} 
\frac{\Eta(\varrho-u)}{\sn^2(u) - \sn^2(\varrho)} 
\qquad \text{by~\eqref{eqn:phi}} \\
&= - \frac{(b_2-b_1)\sn^2(\varrho)}{\Eta(2\varrho)} 
\frac{k\,\Theta^4(\varrho)}{\Theta^2(0)\Eta(2\varrho)} 
\qquad\text{by~\eqref{eqn:snsn-Eta}} \\
&= - \frac{(b_4-b_1)(b_3-b_1)}{4(b_2-b_1)} \frac{\Theta^4(0)}{\Theta^4(\varrho)} 
\qquad \text{by~\eqref{eqn:Eta(2u)}~and~\eqref{eqn:cndn_rho}}.
\end{align*}
Altogether, this shows that $g_E$ in~\eqref{eqn:gE} is the Green's function of $\comp{E}$ and completes the proof of~\ref{it:green_function}.
Since $\log(\capacity(E)) = \lim_{z \to \infty} ( \log \abs{z} - g_E(z) )$,
see~\cite[Thm.~5.2.1]{Ransford1995}, we obtain the first identity of~\eqref{eqn:capacity_elliptic}.
The second identity of~\eqref{eqn:capacity_elliptic} follows 
from~\eqref{eqn:sncndn-Theta} and~\eqref{eqn:cndn_rho}.

\ref{it:critical_point_gE}
Since $E = [b_1, b_2] \cup [b_3, b_4]$, the Green's function $g_E$ has exactly one critical point $z_*$, which satisfies $z_* \in \oo{b_2, b_3}$; see~\cite[Thm.~2.8\,(i)]{SchiefermayrSete2023}.
The point $z_*$ is the zero of $\partial_z g_E(z)$.  Substituting $z = \varphi(u)$ and using~\eqref{eqn:wirtinger}, we have
\begin{equation}
2 \partial_z g_E(\varphi(u)) \varphi'(u) = 2 \partial_u ( g_E(\varphi(u)) )
= \frac{\dd}{\dd u} \log \left( \frac{\Eta(\varrho + u)}{\Eta(\varrho - u)} \right).
\end{equation}
By~\eqref{eqn:dlogf} and~\eqref{eqn:phi},
\begin{align*}
\frac{\dd}{\dd u} \log \left( \frac{\Eta(\varrho + u)}{\Eta(\varrho - u)} \right)
&= 2 \zn(\varrho) + \frac{2 \sn(\varrho) \cn(\varrho) \dn(\varrho)}{\sn^2(\varrho) - \sn^2(u)} \\
&= 2 \zn(\varrho) - \frac{2 \cn(\varrho) \dn(\varrho) (\varphi(u) - b_2)}{(b_2 - b_1) \sn(\varrho)}.
\end{align*}
Setting this expression to zero gives
\begin{equation*}
\varphi(u) = b_2 + \frac{(b_2 - b_1) \sn(\varrho) \zn(\varrho)}{\cn(\varrho) \dn(\varrho)}
= b_2 + \sqrt{(b_3 - b_1) (b_4 - b_2)} \zn(\varrho),
\end{equation*}
where the last equality follows from~\eqref{eqn:lambda} and~\eqref{eqn:cndn_rho}.
\end{proof}

\bibliographystyle{siam}
\bibliography{walshmap.bib}

\begin{thebibliography}{10}

\bibitem{Akhiezer1930}
{\sc N.~Achieser}, {\em Sur les polynomes de {Tchebycheff} pour deux
  segments.}, C. R. Acad. Sci., Paris, 191 (1930), pp.~754--756.

\bibitem{Akhiezer1932}
{\sc N.~Achyeser}, {\em {\"U}ber einige {Funktionen}, welche in zwei gegebenen
  {Intervallen} am wenigsten von {Null} abweichen. {I}.}, Bull. Acad. Sci.
  URSS, 1932 (1932), pp.~1163--1202.

\bibitem{AndersonVamanamurthyVuorinen1997}
{\sc G.~D. Anderson, M.~K. Vamanamurthy, and M.~K. Vuorinen}, {\em Conformal
  invariants, inequalities, and quasiconformal maps}, Canadian Mathematical
  Society Series of Monographs and Advanced Texts, John Wiley \& Sons, Inc.,
  New York, 1997.

\bibitem{ByrdFriedman1971}
{\sc P.~F. Byrd and M.~D. Friedman}, {\em Handbook of elliptic integrals for
  engineers and scientists}, vol.~Band 67 of Die Grundlehren der mathematischen
  Wissenschaften, Springer-Verlag, New York-Heidelberg, second~ed., 1971.

\bibitem{CarlsonTodd1983_elliptic}
{\sc B.~C. Carlson and J.~Todd}, {\em The degenerating behavior of elliptic
  functions}, SIAM J. Numer. Anal., 20 (1983), pp.~1120--1129.

\bibitem{ChristiansenSimonZinchenko2025}
{\sc J.~S. Christiansen, B.~Simon, and M.~Zinchenko}, {\em Bounds for weighted
  {C}hebyshev and residual polynomials on subsets of $\mathbb{R}$}, arXiv:
  2502.11424,  (2025).

\bibitem{EremenkoYuditskii2012}
{\sc A.~Eremenko and P.~Yuditskii}, {\em Comb functions}, in Recent advances in
  orthogonal polynomials, special functions, and their applications, vol.~578
  of Contemp. Math., Amer. Math. Soc., Providence, RI, 2012, pp.~99--118.

\bibitem{Fischer1996}
{\sc B.~Fischer}, {\em Polynomial based iteration methods for symmetric linear
  systems}, Wiley-Teubner Series Advances in Numerical Mathematics, John Wiley
  \& Sons, Ltd., Chichester; B. G. Teubner, Stuttgart, 1996.

\bibitem{Grunsky1957a}
{\sc H.~Grunsky}, {\em \"{U}ber konforme {A}bbildungen, die gewisse
  {G}ebietsfunktionen in elementare {F}unktionen transformieren. {I}}, Math.
  Z., 67 (1957), pp.~129--132.

\bibitem{Grunsky1957b}
{\sc H.~Grunsky{}}, {\em \"{U}ber konforme {A}bbildungen, die gewisse
  {G}ebietsfunktionen in elementare {F}unktionen transformieren. {II}}, Math.
  Z., 67 (1957), pp.~223--228.

\bibitem{Grunsky1978}
{\sc H.~Grunsky}, {\em Lectures on theory of functions in multiply connected
  domains}, Vandenhoeck \& Ruprecht, G\"{o}ttingen, 1978.
\newblock Studia Mathematica, Skript 4.

\bibitem{Jenkins1958}
{\sc J.~A. Jenkins}, {\em On a canonical conformal mapping of {J}. {L}.
  {W}alsh}, Trans. Amer. Math. Soc., 88 (1958), pp.~207--213.

\bibitem{Landau1961}
{\sc H.~J. Landau}, {\em On canonical conformal maps of multiply connected
  domains}, Trans. Amer. Math. Soc., 99 (1961), pp.~1--20.

\bibitem{Lawden1989}
{\sc D.~F. Lawden}, {\em Elliptic functions and applications}, vol.~80 of
  Applied Mathematical Sciences, Springer-Verlag, New York, 1989.

\bibitem{LiesenSeteNasser2017}
{\sc J.~Liesen, O.~S\`ete, and M.~M.~S. Nasser}, {\em {Fast and Accurate
  Computation of the Logarithmic Capacity of Compact Sets}}, Comput. Methods
  Funct. Theory, 17 (2017), pp.~689--713.

\bibitem{NasserLiesenSete2016}
{\sc M.~M.~S. Nasser, J.~Liesen, and O.~S\`ete}, {\em Numerical computation of
  the conformal map onto lemniscatic domains}, Comput. Methods Funct. Theory,
  16 (2016), pp.~609--635.

\bibitem{Needham1997}
{\sc T.~Needham}, {\em Visual complex analysis}, The Clarendon Press, Oxford
  University Press, New York, 1997.

\bibitem{Ransford1995}
{\sc T.~Ransford}, {\em Potential theory in the complex plane}, vol.~28 of
  London Mathematical Society Student Texts, Cambridge University Press,
  Cambridge, 1995.

\bibitem{Schiefermayr2008}
{\sc K.~Schiefermayr{}}, {\em An upper bound for the logarithmic capacity of
  two intervals}, Complex Var. Elliptic Equ., 53 (2008), pp.~65--75.

\bibitem{Schiefermayr2011}
{\sc K.~Schiefermayr}, {\em Estimates for the asymptotic convergence factor of
  two intervals}, J. Comput. Appl. Math., 236 (2011), pp.~28--38.

\bibitem{Schiefermayr2015_Zolotarev}
{\sc K.~Schiefermayr}, {\em Zolotarev's conformal mapping and {C}hebotarev's
  problem}, Integral Transforms Spec. Funct., 26 (2015), pp.~118--133.

\bibitem{SchiefermayrSete2023}
{\sc K.~Schiefermayr and O.~S\`ete}, {\em Walsh's {C}onformal {M}ap onto
  {L}emniscatic {D}omains for {P}olynomial {P}re-images {I}}, Comput. Methods
  Funct. Theory, 23 (2023), pp.~489--511.

\bibitem{SchiefermayrSete2024}
{\sc K.~Schiefermayr{} and O.~S\`ete}, {\em Walsh's {C}onformal {M}ap {O}nto
  {L}emniscatic {D}omains for {P}olynomial {P}re-images {II}}, Comput. Methods
  Funct. Theory, 24 (2024), pp.~257--281.

\bibitem{SchiefermayrSete2025}
{\sc K.~Schiefermayr and O.~S\`{e}te}, {\em Walsh's conformal map onto
  lemniscatic domains for several intervals}, Constr. Approx., 62 (2025),
  pp.~565--590.

\bibitem{SeteLiesen2016}
{\sc O.~S\`ete and J.~Liesen}, {\em On conformal maps from multiply connected
  domains onto lemniscatic domains}, Electron. Trans. Numer. Anal., 45 (2016),
  pp.~1--15.

\bibitem{SeteLiesen2017}
{\sc O.~S\`ete and J.~Liesen}, {\em Properties and examples of {F}aber-{W}alsh
  polynomials}, Comput. Methods Funct. Theory, 17 (2017), pp.~151--177.

\bibitem{Suetin1998}
{\sc P.~K. Suetin}, {\em Series of {F}aber polynomials}, vol.~1 of Analytical
  Methods and Special Functions, Gordon and Breach Science Publishers,
  Amsterdam, 1998.

\bibitem{Walsh1956}
{\sc J.~L. Walsh}, {\em On the conformal mapping of multiply connected
  regions}, Trans. Amer. Math. Soc., 82 (1956), pp.~128--146.

\bibitem{Walsh1958}
{\sc J.~L. Walsh{}}, {\em A generalization of {F}aber's polynomials}, Math.
  Ann., 136 (1958), pp.~23--33.

\bibitem{Walsh1969}
{\sc J.~L. Walsh}, {\em Interpolation and approximation by rational functions
  in the complex domain}, Fifth edition. American Mathematical Society
  Colloquium Publications, Vol. XX, American Mathematical Society, Providence,
  R.I., 1969.

\bibitem{Wegert2012}
{\sc E.~Wegert}, {\em Visual complex functions. An introduction with phase
  portraits.}, Birkh\"auser/Springer Basel AG, Basel, 2012.

\bibitem{WegertSemmler2011}
{\sc E.~Wegert and G.~Semmler}, {\em Phase plots of complex functions: a
  journey in illustration}, Notices Amer. Math. Soc., 58 (2011), pp.~768--780.

\bibitem{Widom1969}
{\sc H.~Widom}, {\em Extremal polynomials associated with a system of curves in
  the complex plane}, Advances in Math., 3 (1969), pp.~127--232.

\end{thebibliography}

\end{document}